\documentclass[12pt]{amsart}
\usepackage[a4paper,margin=28mm]{geometry}
\usepackage[all,2cell]{xy} \UseAllTwocells
\usepackage{tikz}
\usepackage{rotating}
\usepackage{xspace}
\usepackage[pagewise]{lineno}
\usepackage[usenames,dvipsnames]{pstricks}
\usepackage{amssymb,mathrsfs,amsmath}
\overfullrule5pt

\usepackage[letterspace=-45]{microtype}

\usepackage{eucal}
\usepackage[usenames,dvipsnames]{pstricks}
\usepackage{epsfig}
\usepackage{pst-grad} 
\usepackage{pst-plot} 
\usepackage{tikz-cd}

\usetikzlibrary{arrows.meta}

\tikzcdset{
arrow style=tikz,
diagrams={>={Classical TikZ Rightarrow[length=2.0mm, width=1.25mm]}}
}

\usepackage[raiselinks=false,colorlinks=true,citecolor=blue,urlcolor=blue,linkcolor=blue,bookmarksopen=true]{hyperref}

\usepackage{xargs}




\usepackage{amsmath, amssymb}
\usepackage[shortlabels]{enumitem}

\vbadness100000
\hbadness=100000
\catcode`\@=11
\def\@evenfoot{\rule{0pt}{20pt}[\today] \hfill [{\tt \jobname.tex}]}
\def\@oddfoot{\rule{0pt}{20pt}{[\tt \jobname.tex}]\hfill [\today]}
\catcode`\@=13
\parskip 5pt

\newtheorem{theorem}{Theorem}[section]
\newtheorem{proposition}[theorem]{Proposition}
\newtheorem{lemma}[theorem]{Lemma}

\theoremstyle{definition}
\newtheorem{definition}[theorem]{Definition}
\newtheorem{example}[theorem]{Example}

\newtheorem{remark}[theorem]{Remark}
\newtheorem{observation}[theorem]{Observation}
\newtheorem{warning}[theorem]{Warning}
\newtheorem{apology}[theorem]{Apology}

\newtheorem{taller}[theorem]{$\!\!$}
\newenvironment{blanko}[1]%
{\begin{taller}{\normalfont\bfseries  #1}\normalfont}%
{\end{taller}}

\makeatletter

\newcommand{\unit}{\pi}

\def\Ob{{\rm Ob}}

\def\catO{{\mathbb O}}
\def\catP{{\mathbb P}}

\def\DDD{{\mathsf D}}

\def\Dt{\simplexcategory\kern-1pt^\ttt}

\newcommand{\KK}{\mathcal{K}}

\DeclareSymbolFont{bbold}{U}{bbold}{m}{n}
\DeclareSymbolFontAlphabet{\mathbbold}{bbold}
\def\id{\mathbbold{1}}

\def\Fin{{\mathbb{F}\mathrm{in}}}
\def\Fin{{\mathbb{F}}}

\def\fop{fibrewise order-preserving\xspace}

\def\ff{functorially factorisable\xspace}
\def\strictlyfactorisable{strictly factorisable\xspace}

\def\dd{\mathsf{d}}
\def\ss{\mathsf{s}}
\def\dw{\mathrm{\delta}}
\def\sw{\mathrm{\gamma}}
\def\rrr{\mathsf{r}}
\def\iii{\mathsf{i}}
\def\uuu{\mathsf{u}}

\def\tttt{\mathsf{t}}

\DeclareRobustCommand\upperstar{%
  \mathchoice%
    {\kern0pt\raise0.55ex\hbox{$\displaystyle *$}\kern0.8pt}
    {\kern0pt\raise0.58ex\hbox{$\textstyle *$}\kern0.8pt}
    {\kern0pt\raise0.45ex\hbox{$\scriptstyle *$}\kern0.4pt}
    {\kern0pt\raise0.4ex\hbox{$\scriptscriptstyle *$}\kern0.2pt}
}%
\DeclareRobustCommand\lowerstar{%
  \mathchoice%
    {\kern0pt\raise-0.65ex\hbox{$\displaystyle *$}\kern0.8pt}
    {\kern0pt\raise-0.68ex\hbox{$\textstyle *$}\kern0.8pt}
    {\kern0pt\raise-0.55ex\hbox{$\scriptstyle *$}\kern0.4pt}
    {\kern0pt\raise-0.5ex\hbox{$\scriptscriptstyle *$}\kern0.2pt}
}%

\newcommand{\commutes}{\text{\texttt{"}}}

\makeatletter
\newcommand{\xRightarrow}[2][]{\ext@arrow 0359\Rightarrowfill@{#1}{#2}}
\makeatother

\makeatletter
\newcommand{\xLeftarrow}[2][]{\ext@arrow 0359\Leftarrowfill@{#1}{#2}}
\makeatother

\newcommand{\dbto}{\Rightarrow}

\newcommand{\xto}{\xrightarrow}

\newcommand{\drpullback}{\arrow[phantom]{dr}[very near start,description]{\lrcorner}}

\usepackage[all]{xy}
\newcommand{\cd}[2][]{\vcenter{\hbox{\xymatrix#1{#2}}}}

\newcommand{\name}[1]{\ulcorner #1\urcorner}
\newcommand{\isopil}{\stackrel{\raisebox{0.1ex}[0ex][0ex]{\(\sim\)}}%
			{\raisebox{-0.15ex}[0.28ex]{\(\rightarrow\)}}}
\newcommand{\isleftadjointto}{\dashv}
\newcommand{\thg}{{\mathord{\text{--}}}}

\newcommand{\SSS}{\mathsf{S}}
\newcommand{\TTT}{\mathsf{T}}

\newcommand{\ttt}{\mathsf{t}}

\def\wnerve{W}
\def\fnerve{P}

\def\and{{\mbox { and }}}
\def\listtodoname{List of Todos}
\def\listoftodos{\@starttoc{tdo}\listtodoname}
\makeatother

\newcommand{\fib}{\varphi}

\newcommand{\Fun}{\operatorname{Fun}}

\newcommand{\Decbot}[1]{\operatorname{Dec}_\bot{}\kern-2pt{#1}}
\newcommand{\Dectop}[1]{\operatorname{Dec}_\top{}\kern-2pt{#1}}

\providecommand{\norm}[1]{\left| {#1}\right|}

\providecommand{\kat}[1]{\textnormal{\lsstyle{{\texttt{#1}}}}}

\newcommand{\Cat}{\kat{Cat}}

\DeclareMathAlphabet{\mathbbe}{U}{bbold}{m}{n}
\newcommand{\simplexcategory}{\mathbbe{\Delta}}

\newcommand{\op}{^{\text{{\rm{op}}}}}

\title{Pita factorisation in operadic categories}
\author[M.~Batanin, J.~Kock, M.~Weber]{Michael Batanin, Joachim Kock, Mark Weber}

\subjclass[2020]{18M60 (Primary), 18N50 (Secondary)}
\catcode`\@=11
\catcode`\@=13

\begin{document}

\maketitle

\begin{abstract}
  In strictly factorisable operadic categories, every morphism $f$ factors uniquely as
  $f=\eta_f \circ \pi_f$ where $\eta_f$ is order-preserving and $\pi_f$ is a
  quasi\-bijection that is order-preserving on the fibres of $\eta_f$. We call it the
  pita factorisation. In this paper we develop some general theory to compensate for
  the fact that generally pita factorisations do not form an orthogonal factorisation
  system. The main technical result states that a certain simplicial object in Cat,
  called the pita nerve, is oplax (rather than strict as it would be for an orthogonal
  factorisation system). The main application is the result that the so-called
  operadic nerve of any operadic category is coherent. This result is a key ingredient
  in the simplicial approach to operadic categories developed in the `main paper'
  \cite{Batanin-Kock-Weber:mainpaper}, which motivated the present paper. We also show
  that in the important case where quasibijections are invertible, the pita nerve is a
  decomposition space (a.k.a.~$2$-Segal space).
\end{abstract}

\tableofcontents

\setcounter{page}{1}

\setcounter{secnumdepth}{3}
\setcounter{tocdepth}{1}


\section{Introduction}

In combinatorics, it is a standard technique to deal with finite sets by
working only with standard finite ordinals, but with maps that are not
necessarily order-preserving. The following fact is a fundamental tool:

  Every map $f : m \to n$ factors {\em uniquely} as $f = \eta_f \circ \pi_f$

\noindent where $\eta_f$ is order-preserving and $\pi_f$ is a fibrewise
order-preserving bijection. 

The middle object $m \to m' \to n$ is the ordinal sum
$\sum_{i\in n} f^{-1}(i)$, which can be considered just a reordering of the elements in
$m$.
The following picture is very suggestive of the working of this 
{\em pita factorisation}.\footnote{The terminology is just derived from the Greek 
letters $\pi$ and $\eta$, which we had been using consistently. Unable to come 
up with a short descriptive 
name, we finally opted to officialise the name {\em pita factorisation}. (We
are aware that pita is not spelled with an $\eta$ in Greek.)} Verbally, make a cut 
after all the crossings of strands, and insert $m'$ at this 
level:

\[
\begin{tikzpicture}[scale=0.7]
 
\begin{scope}

  \begin{scope}[shift={(0,2.6)}]
	\draw (0.0,0) node (A1) {$1$};
	\draw (1.0,0) node (A2) {$2$};
	\draw (2.0,0) node (A3) {$3$};
	\draw (3.0,0) node (A4) {$4$};
	\draw (4.0,0) node (A5) {$5$};
	\draw (5.0,0) node (A6) {$6$};
	\draw (6.0,0) node (A7) {$7$};
  \end{scope}

  \begin{scope}[shift={(0,0)}]
	\coordinate (B1) at (0.2,0);
	\coordinate (B2) at (1,-0.7);
	\coordinate (B3) at (2,0);
	\coordinate (B4) at (3,0);
	\coordinate (B5) at (3.67,-0.7);
	\coordinate (B6) at (5,0);
	\coordinate (B7) at (5.7,0);
	\draw[dotted] (-0.2,-0.1) -- (6.1,-0.1);
  \end{scope}

  \draw (-0.8,0.5) node {\small $f$};
  
  \begin{scope}[shift={(0,-1.7)}]
	\draw (1.0,0) node (C1) {$1$};
	\draw (2.33,0) node (C2) {$2$};
	\draw (3.67,0) node (C3) {$3$};
	\draw (5,0) node (C4) {$4$};
  \end{scope}

  \draw (A1) to[out=-90, in=90] (B5);
  \draw (B5) to[out=-90, in=90] (C3);
  \draw (A2) to[out=-90, in=90] (C2);
  \draw (A3) to[out=-90, in=90] (B1);
  \draw (B1) to[out=-90, in=90] (C1);
  \draw (A4) to[out=-90, in=90] (B2);
  \draw (B2) to[out=-90, in=90] (C1);
  \draw (A5) to[out=-90, in=90] (B7);
  \draw (B7) to[out=-90, in=90] (C4);
  \draw (A6) to[out=-90, in=90] (C2);
  \draw (A7) to[out=-90, in=90] (C3);

\end{scope}

\draw (8.6,0.3) node {$\leadsto$};

\begin{scope}[shift={(12,0)}]

  \begin{scope}[shift={(0,2.6)}]
	\draw (0.0,0) node (A1) {$1$};
	\draw (1.0,0) node (A2) {$2$};
	\draw (2.0,0) node (A3) {$3$};
	\draw (3.0,0) node (A4) {$4$};
	\draw (4.0,0) node (A5) {$5$};
	\draw (5.0,0) node (A6) {$6$};
	\draw (6.0,0) node (A7) {$7$};
  \end{scope}

  \begin{scope}[shift={(0,0)}]
	\draw (0.0,0) node (B1) {$1$};
	\draw (1.0,0) node (B2) {$2$};
	\draw (2.0,0) node (B3) {$3$};
	\draw (3.0,0) node (B4) {$4$};
	\draw (4.0,0) node (B5) {$5$};
	\draw (5.0,0) node (B6) {$6$};
	\draw (6.0,0) node (B7) {$7$};
  \end{scope}

  \draw (A1) to[out=-90, in=90] (B5);
  \draw (A2) to[out=-90, in=90] (B3);
  \draw (A3) to[out=-90, in=90] (B1);
  \draw (A4) to[out=-90, in=90] (B2);
  \draw (A5) to[out=-90, in=90] (B7);
  \draw (A6) to[out=-90, in=90] (B4);
  \draw (A7) to[out=-90, in=90] (B6);

  \begin{scope}[shift={(0,-1.7)}]
	\draw (1.0,0) node (C1) {$1$};
	\draw (2.33,0) node (C2) {$2$};
	\draw (3.67,0) node (C3) {$3$};
	\draw (5,0) node (C4) {$4$};
  \end{scope}

  \draw (B1) to[out=-90, in=90] (C1);
  \draw (B2) to[out=-90, in=90] (C1);
  \draw (B3) to[out=-90, in=90] (C2);
  \draw (B4) to[out=-90, in=90] (C2);
  \draw (B5) to[out=-90, in=90] (C3);
  \draw (B6) to[out=-90, in=90] (C3);
  \draw (B7) to[out=-90, in=90] (C4);

    \draw (-0.8,1.3) node {\small $\pi_f$};
  \draw (-0.8,-0.9) node {\small $\eta_f$};

\end{scope}

\end{tikzpicture}
\]

The factorisation is implicit in a many situations in combinatorics, in
particular in connection with partitions (surjections), where it is often
involved in intermediate steps of `standardisation'. The factorisation also
comes up in operad theory, where it is custom to label inputs of operations with
numbers (so as to be able to talk about substitution into input slot number $i$
(the circle-i approach to operads, sometimes called Markl operads)). This often
requires reindexing and keeping track of permutations. In some situations, the
combinatorics becomes so involved that it is necessary to be explicit about the
factorisations. This was the case in our earlier work 
(in Batanin~\cite[Prop.~3.1]{BMEH} where explicit use of the 
factorisations was required in the proof of a general 
higher Eckmann--Hilton argument; in Weber~\cite{Weber:1503.07585} where 
these factorisations
are involved to compute codescent objects of crossed internal categories; and 
in Batanin--Kock--Weber
\cite{Batanin-Kock-Weber:1510.08934}, where the factorisations enter the proof
of the biequivalence between regular patterns and substitudes and between
Feynman categories and coloured operads.)

Beyond the skeletal category of finite set, a version of pita
factorisation was established by Lavers~\cite{Lavers} in the category of
vines, whose morphisms are a mixture of set maps and braids. Lavers
showed that every morphism $f$ in this category factors as $f = \eta_f
\circ \pi_f$ where the fibrewise-order-preserving factor $\pi_f$ is now a
{\em braiding} rather than just a permutation~\cite[Prop.~8]{Lavers}.
This particular version of the pita factorisation was later exploited by Day
and Street~\cite{Day-Street:substitudes} who interpreted it as a
distributive law, and used it to generalise substitudes and braided
substitudes to certain lax symmetric or braided monoidal functors to
sylleptic (braided) Gray monoids. Weber~\cite{Weber:1503.07585} exploited
the braided form of the pita factorisation to prove that the category of
vines is the free braided monoidal category on a commutative monoid
object.

The theory of operadic categories was introduced and developed by Batanin
and Markl~\cite{Batanin-Markl:1404.3886}, originally for the purpose of
proving the duoidal Deligne conjecture. It is a general theory for
operad-like structures, where each operadic category has it own notion of
operads. The afore-mentioned skeletal category $\Fin$ of finite sets is
the operadic category whose operads are ordinary symmetric operads.
(It is important that the skeleton is used. The notion of
operadic category is not invariant under equivalence of categories.)
There are operadic categories for other kinds of operads, and these
operads in turn have their own notion of algebras, just like ordinary
symmetric operads.

Since operadic categories can be seen as generalisations of the skeletal
category $\Fin$ (and relate to it via the cardinality functor), and the
theory often resorts to arguments exploiting the pita factorisation, it
was natural to consider generalisations of the pita factorisation to
operadic categories,
cf.~\cite{Batanin-Markl:1812.02935,Batanin-Markl:2105.05198}. For many
important operadic categories $\catO$, such as categories of graphs or
trees, there is such a notion, whereby a general morphism $f:T\to S$ in
$\catO$ factors uniquely as $\pi_f$ followed by $\eta_f$, where $\eta_f$
is order-preserving (meaning that its cardinality is an order-preserving
map in $\Fin$) and $\pi_f$ is a quasibijection that is order-preserving
on fibres. A morphism in an operadic category is a quasibijection if all
fibres are trivial (in the technical sense, as recalled below in
\ref{qbij}). Such operadic categories are called \strictlyfactorisable
operadic categories. It is one of the conditions required in order to
have a general theory of Koszul duality in operadic categories
\cite{Batanin-Markl:1812.02935,Batanin-Markl:2105.05198}. In particular,
all the examples treated in \cite{Batanin-Markl:2105.05198} are
\strictlyfactorisable, including many variations of the category of
graphs, such as notably connected genus-graded graphs -- the operadic
category for which algebras over the terminal operad are precisely
modular operads~\cite[Prop.~5.11]{Batanin-Markl:2105.05198}.

In the present paper we develop further the theory of factorisable operadic
categories. The theory can be seen as an effort to learn to live with the fact
that the pita factorisation, useful and important as it is, is not an orthogonal
factorisation system! Indeed, the defining property of the 
factorisation is tied locally to the codomain -- there is no global 
characterisation of the left-hand class of maps. This local nature of 
the factorisations gives the theory a somewhat different flavour.

We are particularly interested in properties of certain associated
reflection functors, also an important aspect of orthogonal
factorisation systems
(cf.~Cassidy--H\'ebert--Kelly~\cite{Cassidy-Hebert-Kelly}).

For an orthogonal factorisation system, it is natural and useful to
construct a double category with one class of morphisms as horizontal
arrows and the other class vertical, so it can be seen as a simplicial
object in $\Cat$ of chains of maps in the second class -- the categories
are then with morphisms from the first class. (The double categories
obtained are so-called {\em vacant} double
categories~\cite{Mackenzie:1992}, \cite{Andruskiewitsch-Natale:0308228}; see
\cite{Stepan:2305.06714} and \cite{Juran:2501.01363}
for recent developments.) For pita factorisations, things are more
involved, because of the local nature of the factorisations. The natural
thing to do is to consider only locally order-preserving chains, meaning
chains $T_n \to \cdots \to T_0$ all of whose composites ending in $T_0$
are order-preserving. (Note that the `right-hand class' -- the
order-preserving maps -- does not have the usual closure property that
right-hand classes have in orthogonal factorisation systems: it is not
true (not even in $\Fin$) that if $gf$ and $g$ are order-preserving then
so is $f$.) The categories involved then have as maps certain fibrewise
order-preserving quasibijections. In its naive form, this is not a
simplicial object because the top face operators, those omitting $T_0$,
are missing, but this can be fixed by exploiting the reflection functor.
This provides top face operators, but they do not satisfy the simplicial
identities. Our main theorem states that this {\em pita nerve} is
nevertheless coherent in the appropriate sense:

\smallskip

\noindent {\bf Theorem A.} (Theorem~\ref{laxcoherence}.) {\em The pita nerve of a
strictly factorisable operadic category is a coherent toplax simplicial category.}

\smallskip

The notion of toplax simplicial category is a natural class of oplax
simplicial categories arising from decalage, as we explain. The
coherators are given by fibrewise order-preserving quasibijections, so
the pita nerve is almost a pseudo-simplicial groupoid. (We refrain from
calling it a quasi-pseudo-simplicial groupoid.)

In many cases of (strictly factorisable) operadic categories, all 
quasibijections are invertible. In this case the pita nerve takes values 
in groupoids instead of categories, and becomes a
pseudo-simplicial groupoid. In this case we show

\smallskip

\noindent {\bf Theorem B.} (Theorem~\ref{thm:decomp}.) {\em For a strictly
factorisable operadic category with invertible quasi\-bijections, the pita nerve is a
decomposition space (a.k.a.~$2$-Segal space).}

\smallskip

It is beyond the scope of this paper to pursue the decomposition-space
aspect to any depth, but we calculate the incidence bialgebra of the
operadic category $\Fin_{\mathrm{surj}}$ of finite sets and surjections.
It is a bialgebra we have not seen before, closely related to the Fa\`a
di Bruno bialgebra.

\smallskip

It is a salient feature of the pita nerve that the top face operators are
of a different kind than the remaining face operators. This may look
artificial at first sight, but there are several significant instances of
this phenomenon elsewhere. We point out a few:

(1) {\em Waldhausen's S-dot construction}: in it most naive form, the
S-dot construction on an abelian category takes $S_n$ to be the groupoid
of chains of $n-1$ composable epimorphisms (fibrations). The face
operators $d_0$ and $d_1$ from $S_2$ to $S_1$ then send an epimorphism to
its codomain and its domain, respectively. The special top face operator
$d_2$ returns the kernel. This gives only a pseudo-simplicial groupoid,
and one of Waldhausen's insights was to rectify this with a less naive
construction.

(2) {\em The two-sided bar construction of an operad} (see for
example~\cite{Kock-Weber:1609.03276}): here the top face operator is a
Kleisli map (with respect to the free-symmetric-strict-monoidal-category 
monad), whereas all the other face operators are ordinary maps. In
terms of trees this is because composing out levels in a tree (or deleting
the leaf level) produces again a tree, whereas deleting the root level
produces instead a forest.

(3) {\em Hereditary species}: Carlier~\cite{Carlier:1903.07964}
constructed an operadic category from any hereditary species in the sense
of Schmitt~\cite{Schmitt:hacs} via a simplicial groupoid (which is a
decomposition space). This has the flavour of the two preceding examples.
For instance, take the hereditary species of graphs: the simplicial
groupoid then has contractions of graphs in simplicial degree $2$. The
face operators $d_0$ and $d_1$ assign to such a contraction its codomain
and domain, respectively, whereas the special top face operator $d_2$
assigns the set of fibres (so it is a Kleisli map as in example (2), and
it gives only a pseudo-simplicial groupoid as in example (1)). (Cebrian
and Forero~\cite{Cebrian-Forero:2211.07721} generalised Carlier's
construction to so-called directed hereditary
species~\cite{Galvez-Kock-Tonks:1708.02570} and used our results
from~\cite{Batanin-Kock-Weber:mainpaper} to give a much simpler proof
(covering also Carlier's result).)

\smallskip

Although we develop general theory, the motivation is one particular
application of Theorem A. In the `main paper'
\cite{Batanin-Kock-Weber:mainpaper}, we develop a new simplicial approach
to operadic categories, but since one technical point turned out to be
quite involved and required theory of independent interest, it has now
become this stand-alone paper.
We briefly explain the relationship, referring to the paper
\cite{Batanin-Kock-Weber:mainpaper} for details. Work of Garner, Kock,
and Weber~\cite{Garner-Kock-Weber:1812.01750} suggested that the
structure of operadic category on a category $\catO$ should amount to a
kind of `undecking': a certain simplicial object $X$ of which $\catO$ is
the upper decalage -- very roughly ``$\Dectop{} X = N \catO$''. This
equation is true for unary operadic categories (categories for which all
objects have cardinality $1$) \cite{Garner-Kock-Weber:1812.01750}, but
for general operadic categories it is considerably more involved to make
sense of the idea. Garner, Kock, and Weber gave one solution, using a
modification of the decalage comonad, but the undecking idea is not the
most prominent in their work. In \cite{Batanin-Kock-Weber:mainpaper} we
work out a different implementation of the undecking idea, which seems
rather natural: the undecking now takes place in the Kleisli category
for the free-strict-symmetric-monoidal-category monad $\SSS$, and the
simplicial object $X$ is thus a simplicial groupoid rather than a
simplicial set. The equation is rather $\Dectop{} X = \SSS N \catO$. The
top degeneracy operators express the structure of chosen local terminals
(as discovered in \cite{Garner-Kock-Weber:1812.01750}), whereas the top
face operators account for the fibre functor. With some further minor
adjustments, we can actually use this equation to {\em define} operadic
categories, and all the axioms then end up as simplicial identities. It must
be stressed that it is necessary to let $X$ be pseudo-simplicial instead
of strict simplicial. A key point in the theory is this:

\smallskip

\noindent
{\bf Theorem}~\cite{Batanin-Kock-Weber:mainpaper}. {\em  For any operadic category
$\catO$, its operadic nerve $X$ (undecking) is a coherent pseudo-simplicial
groupoid.}

\smallskip 

We prove this by reducing to Theorem A of the present paper. The
non-strictness here is significant, and is not a shortcoming of the
formalism: it is closely related to the operadic-category axiom saying
that fibres of fibres are fibres. In the way this axiom is formulated
originally, one does not see any non-strictness, but if one attempts at
assembling the fibres into a list, one will note that the list of fibres
of a composite morphism is not equal to the concatenated list of fibres
of fibres -- a reordering is generally required, and this reordering is
an instance of pita factorisation. (Only operadic categories that are
actually over $\simplexcategory$ will have a strict undecking.) The {\em
coherence} of this nonstrictness ends up being one of the axioms for the
cardinality functor of an operadic category.

\section{Pita factorisation in the skeletal category $\Fin$}

We denote by $\Fin$ the skeletal category of finite sets, in which 
the objects are the finite ordinals $\underline{n} = \{1,2,\ldots,n\}$ and 
the arrows are arbitrary set maps (not necessarily order-preserving).
Note that $\Fin$ has a \emph{unique} terminal
object $\underline 1$, which we use to endow $\Fin$ with local terminal
objects.

Given $f : \underline m \rightarrow \underline n$ in $\Fin$ and $i \in
\underline n$, the inverse image $\{\,j \in \underline m \mid f(j) =
i\,\}$ is not in general an object in the skeletal category $\Fin$, but
as a subset of $\underline m$ it inherits a linear order, and there is a
unique order-preserving bijection with an object in $\Fin$, which we
denote $f^{-1}(i)$ and call the \emph{fibre of $f$ at $i$}. We denote by
\begin{equation}\label{eq:10}
  \epsilon_{f, i} : f^{-1}(i) \rightarrow \underline m
\end{equation}
the unique order-preserving injection whose image is $\{\,j \in \underline m \mid
f(j) = i\,\}$. This is the pullback
\[
\begin{tikzcd}
f^{-1}(i) \drpullback \ar[d] \ar[r, "\epsilon_{f,i}"] & \underline m \ar[d, "f"]  \\
\underline 1 \ar[r, "\name{i}"'] & \underline n
\end{tikzcd}
\]
in the category $\Fin$.

Given also
$g \colon \underline \ell \rightarrow \underline m$ in $\Fin$, we
write $g_{i}$ for the unique map of $\Fin$ rendering
\[
\begin{tikzcd}
(fg)^{-1}(i) \drpullback \ar[d, dotted, "g_i"'] \ar[r, "\epsilon_{fg,i}"] & \underline \ell \ar[d, "g"]  \\
f^{-1}(i) \drpullback \ar[d] \ar[r, "\epsilon_{f,i}"] & \underline m \ar[d, "f"]  \\
\underline 1 \ar[r, "\name{i}"'] & \underline n
\end{tikzcd}
\]
commutative, and call it the \emph{fibre map of $g$ with respect to $f$
at $i$}.

The order-preserving injections
$\epsilon_{f, i} : f^{-1}(i) \rightarrow \underline m$
assemble into a single bijection
$$
\pi_f : \underline m \isopil \sum_i f^{-1}(i)
$$
expressing $\underline m$ as a sum of its fibres. This provides
altogether the {\em pita factorisation}: {\em every arrow $f: \underline
m \to \underline n$ factors uniquely as
\[
\begin{tikzcd}
\underline m \ar[dd, "f"'] \ar[rd, "\pi_f"] &   \\
 & \underline m' \ar[ld, "\eta_f"] &[-25pt] = \sum_i f^{-1}(i)  \\
 \underline n &
\end{tikzcd}
\]
where $\eta_f$ is order-preserving and $\pi_f$ is bijective and fibrewise order-preserving.}
Note that here $\underline m'$ is the same object as $\underline m$ (since $\Fin$ is 
skeletal), but it is practical to keep it distinct in the notation to remember that the
natural identification with $\underline m$ is given by $\pi_f$, not by the identity map.

We refer to the picture in the introduction.

The pita factorisation does not constitute a factorisation system, 
because the left-hand component is not 
described intrinsically (the condition `order-preserving on fibres' depends 
on the original map), but it nevertheless has many nice properties, and 
can be made functorial. It is an important technical aspect of the theory
of operadic categories and the main topic of this paper.

\section{Operadic categories}

To fix notation, we briefly recall the definition of operadic category of
Batanin and Markl~\cite{Batanin-Markl:1404.3886}. The notion can be seen
as specifying a category with \emph{formal} notions of fibre and fibre
map. The fibres of a map need not be subobjects of the domain as in the
case of $\Fin$, but the axioms ensure that they retain many important
properties of fibres in $\Fin$.

\begin{definition}
  \label{def:operadic-category}\cite{Batanin-Markl:1404.3886}
  An \emph{operadic category} is given by the following data:
  \begin{enumerate}[label=(D\arabic*)]
  \item \label{data:operadic-Q1} A category $\catO$ endowed with chosen local
    terminal objects; we write $\tau_R:R \to U_R$ for the unique 
    morphism to the chosen terminal object of the component that $R$ belongs 
    to.
    \item \label{data:operadic-Q2} A \emph{cardinality functor}
      $\norm{\thg} \colon \catO \to \Fin$;
    \item \label{data:operadic-Q3} For each object $R\in \catO$ and each
      $i \in \norm{R}$ a \emph{fibre functor}
      $$\fib_{R,i} \colon \catO/R \to \catO$$
      whose action on objects and morphisms we denote as follows:
      \begin{align*}
        \cd[@C1.3em]{
          S \ar[rr]^-{g} && R
        } \qquad &\mapsto \qquad g^{-1}(i)\\
        \cd[@C1em@R-0.7em]{{T} \ar[rr]^-{f} \ar[dr]_-{gf} & &
          {S} \ar[dl]^-{g} \\ &
          {R}} \qquad &\mapsto \qquad
        f_{i} \colon (gf)^{-1}(i) \to g^{-1}(i)\rlap{ ,}
      \end{align*}
      referring to the object $g^{-1}(i)$ as the \emph{fibre
      of $g$ at $i$}, and the morphism
    $f_{i} \colon (gf)^{-1}(i) \linebreak \to g^{-1}(i)$ as the \emph{fibre
      map of $f$ with respect to $g$ at~$i$};
  \end{enumerate}
  all subject to the following axioms, where
  in~\ref{axQ:BM-fibres-of-local-fibres}, we write $\epsilon j$
  for the image of $j \in {\norm g}^{-1}(i)$ under the map
  $\epsilon_{\norm g, i} \colon {\norm g}^{-1}(i) \rightarrow \norm S$
  of~\eqref{eq:10}:

  \begin{enumerate}[label=(A\arabic*)]
  \item \label{axQ:BM-abs(lt)} If $R$ is a local terminal then
    $\norm{R}=\underline 1$;
  \item \label{axQ:BM-fibres-of-identities} For all $R \in \catO$ and
    $i \in \norm R$, the object $(\id_R)^{-1}(i)$ is chosen local terminal;
  \item \label{axQ:BM-67} For all $g \in \catO / R$ and $i \in \norm R$,
    one has $\norm{\smash{g^{-1}(i)}} = {\norm g}^{-1}(i)$, while for
    all $f \colon gf \rightarrow g$ in $\catO / R$ and $i \in \norm R$,
    one has $\norm{\smash{f_{i}}} = \smash{\norm{f}_{i}}$;
  \item \label{axQ:BM-fibres-of-tau-maps} For $R \in \catO$ and for the unique 
	element $1\in \norm{U_R}$, one has
    $\tau_R^{-1}(1) = R$, and for $g \colon S \to R$, one has
    $g_1 = g$;
  \item \label{axQ:BM-fibres-of-local-fibres} For
    $f \colon gf \rightarrow g$ in $\catO/R$, $i \in \norm R$ and
    $j \in \norm{g}^{-1}(i)$, one has that
    $(f_i)^{-1}(j) = f^{-1}(\epsilon j)$, and given also
    $h \colon gfh \rightarrow gf$ in $\catO / R$, one has
    $(h_i)_{j} = h_{\epsilon j}$.
  \end{enumerate}
\end{definition}

The preceding definitions are exactly those
of~\cite{Batanin-Markl:1404.3886} with only some slight repackaging changes
as in \cite{Garner-Kock-Weber:1812.01750}. For subtle variations on the axioms, and 
slightly different notions of operadic category,
see Lack~\cite{Lack:1610.06282} and Markl~\cite{Markl:2502.09163}.

The category $\Fin$ has a unique (and hence chosen) terminal object.
Axiom~\ref{axQ:BM-abs(lt)} now says that the cardinality functor 
preserves chosen local terminals.
Any slice category has a canonical choice of terminal object, 
namely the identity arrow. With respect to this choice, Axiom~\ref{axQ:BM-fibres-of-identities}
says that the fibre functor preserves chosen local terminals.
Axiom~\ref{axQ:BM-67} says that 
for $R \in \catO$ and $i \in \norm R$, the square 
\begin{equation}
\label{diag:fibres-and-cardinalities}
\begin{aligned}{ 
\xygraph{!{0;(2,0):(0,.5)::}
{\catO_{\smash{/R}}}="p0" [r] {\catO}="p1" [d] {\Fin}="p2" [l] {\Fin_{\smash{/|R|}}}="p3"
"p0" :"p1"^-{\fib_{R,i}} :"p2"^-{\norm{\thg}} 
:@{<-}"p3"^-{\fib_{|R|,i}} :@{<-}"p0"^-{\norm{\thg}_{/R}}}}
\end{aligned}
\end{equation}
commutes.
Axiom~\ref{axQ:BM-fibres-of-tau-maps} says that $\fib_{R,i}$ is the
domain functor whenever $R \in \catO$ is chosen local terminal.
Intuitively, the first clause of \ref{axQ:BM-fibres-of-local-fibres}
identifies the fibres of the fibre maps of a map, with the fibres of
that map. 
The second part of \ref{axQ:BM-fibres-of-local-fibres} says that the fibre maps of
the fibre maps of a map are themselves fibre maps of that map. For the 
details of these interpretations, 
see~\cite{Garner-Kock-Weber:1812.01750}.

\section{Fibrewise order-preserving diagrams}

\begin{definition}[{\cite[p.8]{Batanin-Markl:1812.02935}}] \label{qbij}
  A morphism $\sigma:T\to S$ in an operadic category $\catO$ is called a
  {\em quasi\-bijection} if all its fibres are trivial (meaning that they are 
  chosen local terminal objects).
\end{definition}

\begin{remark}\label{qbij=bij}
  Any quasibijection induces a bijection of finite sets after application
  of the cardinality functor. In particular, in the operadic category
  $\Fin$ the quasibijections are already bijections -- in fact, in $\Fin$
  the quasibijections are precisely the bijections.
  In general, quasibijections do not need to be invertible, and not all
  invertible morphism are necessarily quasibijections. (But the identities 
  are always quasibijections, by 
  Axiom~\ref{axQ:BM-fibres-of-identities}.)
  
  A good example where not all invertible morphisms are quasibijections 
  is the category of bouquets~\cite{Batanin-Markl:1404.3886}. Here, for an isomorphism to be a quasibijection
  it must preserve the colourings of the leaves.
 
  Examples where the quasibijections are not invertible are categories
  of $n$-ordinals and $n$-trees \cite{BMEH},
  see~\cite{Batanin-Markl:1812.02935}. In these examples, it is in fact
  relevant to localise by formally inverting the quasibijections, which
  leads to the theory of locally constant $n$-operads and a generalised
  Baez--Dolan--Freudenthal stabilisation
  theorem~\cite{Batanin-White:2001.05432}.
  
  One convenient criterion for invertibility of quasibijections in an
  operadic category $\catO$ is the existence of a discrete operadic
  fibration (or opfibration) $\catO \to \catP$ to an operadic category
  $\catP$ where all quasibijections are invertible (see
  \cite[Table~1]{Batanin-Markl:1812.02935}).
\end{remark}

\begin{definition}\label{def:fop}
  A morphism $\lambda: T\to S$ in an operadic category $\catO$ is called
  {\em order-preserving} if its image $|\lambda|$ under the cardinality 
  functor is order-preserving  in $\Fin$.
  
  Let 
  \begin{equation}\label{fmonotone}
    \xymatrix@C = +3em{
    T \ar[d]_{f} \ar[r]^{\sigma}& T' \ar[d]^{g}
    \\
    S \ar[r]_{\tau} & S'
	}
  \end{equation}
  be a commutative diagram in $\catO$. We call $\sigma$ {\em fibrewise
  order-preserving} with respect to $\tau$ if $\sigma$ induces
  order-preserving morphisms on the fibres of $f\tau$ and $g$. We also
  call such a square a {\em \fop square}. In the particular case of a
  \fop square with $\tau =\id$, we will say that the resulting
  commutative triangle is a {\em \fop triangle}.
\end{definition}

\begin{observation}
  Every map to a local terminal is clearly order-preserving, and as a
  consequence every square~\eqref{fmonotone} with $g=\id$ is a \fop
  square. In particular, $\sigma$ is always \fop with respect to itself.
\end{observation}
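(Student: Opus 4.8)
The plan is to establish the three assertions in turn, each feeding into the next. For the first, let $Z$ be a chosen local terminal; by Axiom~\ref{axQ:BM-abs(lt)} we have $\norm Z = \underline 1$, so for any $\lambda \colon Y \to Z$ the cardinality $\norm\lambda \colon \norm Y \to \underline 1$ is the unique morphism of $\Fin$ into the terminal object $\underline 1$. A map whose codomain has only one element cannot violate monotonicity, so $\norm\lambda$ is order-preserving, and hence so is $\lambda$ by Definition~\ref{def:fop}.

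For the second assertion I would first unwind the definition: in the square~\eqref{fmonotone}, $\sigma$ is a morphism $(T, \tau f) \to (T', g)$ of $\catO/S'$, and applying the fibre functor $\fibre_{S', i}$ for each $i \in \norm{S'}$ produces the induced fibre maps $\sigma^{g}_{i} \colon (\tau f)^{-1}(i) \to g^{-1}(i)$; the square is \fop precisely when all of these are order-preserving. When $g = \id_{T'}$ we have $S' = T'$, and since $g\sigma = \tau f$ we also get $\tau f = \sigma$, so the induced maps take the form $\sigma^{\id_{T'}}_{i} \colon \sigma^{-1}(i) \to (\id_{T'})^{-1}(i)$. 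By Axiom~\ref{axQ:BM-fibres-of-identities} the codomain $(\id_{T'})^{-1}(i)$ is a chosen local terminal, so by the first part each $\sigma^{\id_{T'}}_{i}$ is order-preserving, and therefore the square is a \fop square.

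The third assertion is then the special case of the second in which both vertical legs are identities ($f = \id_T$ and $g = \id_{T'}$), so that $\sigma$ occurs as both the top and the bottom edge; this gives that $\sigma$ is \fop with respect to itself. I do not anticipate a genuine obstacle: the whole argument rests on recognising, via Axiom~\ref{axQ:BM-fibres-of-identities}, that the induced fibre maps of a square with identity vertical legs are maps into local terminals, after which Axiom~\ref{axQ:BM-abs(lt)} finishes the job. The only point calling for a little care is keeping the fibre-functor bookkeeping straight -- in particular noting that $(\id_{T'}\,\sigma)^{-1}(i) = \sigma^{-1}(i)$, so that the source of the induced map is indeed a fibre of $\tau f = \sigma$.
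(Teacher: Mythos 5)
Your argument is correct and is exactly the intended justification of this Observation (which the paper leaves as ``clearly''): Axiom~\ref{axQ:BM-abs(lt)} makes every map to a local terminal order-preserving on cardinalities, Axiom~\ref{axQ:BM-fibres-of-identities} identifies the fibres of $\id_{T'}$ as chosen local terminals so that the induced fibre maps $\sigma^{\id}_i$ are automatically order-preserving, and the self-\fop{} claim is the case $f=\id_T$, $g=\id_{T'}$. The bookkeeping point you flag, that $\tau f=\sigma$ when $g=\id$, is indeed the only detail worth recording.
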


\begin{proposition}\label{verticalfop}
  In an operadic category $\catO$, if in the diagram 
  \begin{equation}\label{fmonotongen}
    \xymatrix@C = +3em{
     T\ar@/^-3.3ex/[dd]_{gf} \ar[d]_{f} \ar[r]^{\sigma}& T \ar[d]^{a}\ar@/^3.3ex/[dd]^{ba} 
           \\
    S \ar[d]_{g}\ar[r]^{\omega} & S \ar[d]^{b}  
      \\ 
     R \ar[r]_{\tau} & R    
  } 
  \end{equation}
  $\omega$ and $\sigma$ are \fop with respect to $\tau$, then $\sigma$ is
  \fop with respect to $\omega$.
\end{proposition}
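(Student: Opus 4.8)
\emph{Proof plan.} My plan is to reduce the statement to the operadic category $\Fin$, where it collapses to a transparent fact about restricting order-preserving maps of finite linearly ordered sets. By Definition~\ref{def:fop}, ``order-preserving'' is merely a property of the image under the cardinality functor, and Axiom~\ref{axQ:BM-67} gives $\norm{\smash{\sigma^g_i}}=\norm{\sigma}^{\norm{g}}_i$ for every fibre map; hence a commutative square in $\catO$ is a \fop square if and only if its image under $\norm{\thg}$ is one in $\Fin$. Applying $\norm{\thg}$ to \eqref{fmonotongen} therefore turns the two hypotheses into the same hypotheses for the image diagram and the desired conclusion into the same conclusion, so it suffices to prove the proposition in the case $\catO=\Fin$ (which is itself an operadic category).

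In $\Fin$ a fibre $h^{-1}(i)$ is the honest preimage carrying its inherited order, and a fibre map is honest restriction. Fixing $k\in\norm{S}$ and putting $i:=b(k)\in\norm{R}$, I would use the commutativities $a\sigma=\omega f$ and $b\omega=\tau g$ to see that $(\omega f)^{-1}(k)\subseteq(\tau gf)^{-1}(i)$ and $a^{-1}(k)\subseteq(ba)^{-1}(i)$ are order-subsets, under which $\sigma^{a}_k$ is exactly the restriction of $\sigma^{ba}_i$ (each is a restriction of $\sigma$ itself). The hypothesis that $\sigma$ is \fop with respect to $\tau$ is precisely that $\sigma^{ba}_i\colon(\tau gf)^{-1}(i)\to(ba)^{-1}(i)$ is order-preserving for all $i\in\norm{R}$, and a restriction of an order-preserving map of linearly ordered sets is order-preserving; so $\sigma^{a}_k$ is order-preserving for all $k\in\norm{S}$, i.e.\ $\sigma$ is \fop with respect to $\omega$.

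An alternative route, staying inside $\catO$, is to feed $X:=R$, $f:=b$, $g:=a$ (the morphism $ba\to b$ of $\catO/R$) and $h:=\sigma$ (the morphism $ba\sigma\to ba$ of $\catO/R$, noting $ba\sigma=\tau gf$) into the second clause of Axiom~\ref{axQ:BM-fibres-of-local-fibres}; it returns $\sigma^{a}_{\epsilon j}=\bigl(\sigma^{ba}_i\bigr)^{a^b_i}_{j}$ for $i\in\norm{R}$, $j\in\norm{b}^{-1}(i)$, exhibiting each $\sigma^{a}_k$ as a fibre map of $\sigma^{ba}_i$; and a fibre map of an order-preserving morphism is again order-preserving, by the same one-line use of Axiom~\ref{axQ:BM-67}. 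Either way, the only real work — and the main place to be careful — is the bookkeeping: tracking which arrow of \eqref{fmonotongen} plays which role, checking the slice-category commutativities that license the axiom (respectively the inclusions of fibres), and verifying that the indices $\epsilon_{\norm{b},i}(j)$ sweep out all of $\norm{S}$. I anticipate no deeper obstacle; in fact the argument as sketched uses only that $\sigma$ is \fop with respect to $\tau$.
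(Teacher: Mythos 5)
Your proposal is correct and follows the paper's own proof exactly: reduce to $\Fin$ via the cardinality functor (justified by Axiom~\ref{axQ:BM-67}) and then observe that the fibre maps over $S$ are restrictions of the fibre maps over $R$, so order-preservation is inherited. You merely supply the details the paper leaves as ``an elementary calculation,'' and your observations that the hypothesis on $\omega$ is not actually needed and that an intrinsic argument via Axiom~\ref{axQ:BM-fibres-of-local-fibres} is available are both accurate.
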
 

\begin{proof}
  The property of a square being \fop depends only on the image of this
  square under the cardinality functor, so we reduce to the 
  corresponding statement in the category $\Fin$. Here it is an 
  elementary calculation.
\end{proof}

\begin{proposition}\label{3out2}
  For a commutative diagram
  \begin{equation} \label{hcompoffop}
    \xymatrix@C = +3em{
     T \ar[d]_{a} \ar[r]^{\sigma}
     & T' \ar[d]_{b} \ar[r]^{\omega}
     & T'' \ar[d]^c
      \\
    S \ar[r]_{\tau} & S' \ar[r]_{\lambda} & S''
  }    
  \end{equation}
  in an operadic category $\catO$, assume the
  square on the right is a \fop square. Then:
  \begin{enumerate}
	\item The square on the left is \fop provided the composite square is
	\fop and $|\omega_i|: |(c\omega)^{-1}(i)|\to|c^{-1}(i)|$ is an injection
	for all $i\in |S''|$.

	\item The composite square is \fop provided the square on the left is
	\fop and $|\lambda|:|S'|\to |S''|$ is a surjection.

	\item If $\sigma,\omega,\tau,\lambda$ are  
	quasibijections, then  
	the composite square is \fop if and only if the left-hand square is
	\fop.
  \end{enumerate}
\end{proposition}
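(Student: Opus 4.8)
The plan is to reduce the whole proposition to an elementary computation in $\Fin$, just as in the proof of Proposition~\ref{verticalfop}: whether a square is \fop depends only on its image under the cardinality functor, and every side condition appearing in (1)--(3) is already a condition on cardinalities (for (3), recall from Remark~\ref{qbij=bij} that a quasibijection becomes an honest bijection after applying $\norm{\thg}$). So I would argue entirely in $\Fin$, where $f^{-1}(i)$ is the literal preimage carrying the order inherited from the domain, and where a commutative square $g\sigma = \tau f$ is \fop exactly when, for every index $i$ in its lower-left corner, $\sigma$ sends the fibre $f^{-1}(i)$ by an order-preserving map into the fibre $g^{-1}(\tau i)$.

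The preliminary step is to set up the bookkeeping for \eqref{hcompoffop} in $\Fin$. For $i \in \norm{S''}$, the preimage $(\lambda\tau a)^{-1}(i) \subseteq T$ is partitioned by $a$ into the fibres $a^{-1}(j)$, with $j$ running over $(\lambda\tau)^{-1}(i)$; commutativity gives $\sigma\bigl(a^{-1}(j)\bigr) \subseteq b^{-1}(\tau j)$ and $\omega\bigl(b^{-1}(\tau j)\bigr) \subseteq c^{-1}(\lambda\tau j) = c^{-1}(i)$; and the map $\omega_i = \omega^c_i$ of the statement is simply the restriction of $\omega$ to $(c\omega)^{-1}(i) = (\lambda b)^{-1}(i)$. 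Unwinding the three \fop conditions in these terms turns each assertion into a statement about order-preserving restrictions of $\sigma$, $\omega$ and $\omega\sigma$.

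For (1), I would fix $j \in \norm S$ and $x \le x'$ in $a^{-1}(j)$. The composite-square hypothesis gives $\omega\sigma(x) \le \omega\sigma(x')$ in $c^{-1}(\lambda\tau j)$; the right-square hypothesis says $\omega$ restricts to an order-preserving map $b^{-1}(\tau j) \to c^{-1}(\lambda\tau j)$, and this restriction is moreover injective because $b^{-1}(\tau j) \subseteq (\lambda b)^{-1}(\lambda\tau j) = (c\omega)^{-1}(\lambda\tau j)$ and $\omega_{\lambda\tau j}$ is assumed injective; an injective order-preserving map out of a totally ordered set is strictly monotone, hence order-reflecting, so $\sigma(x) \le \sigma(x')$ in $b^{-1}(\tau j)$, which is exactly \fop of the left square. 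For (2), fixing $j \in \norm S$, the left-square hypothesis makes $\sigma|_{a^{-1}(j)} \colon a^{-1}(j) \to b^{-1}(\tau j)$ order-preserving and the right-square hypothesis (applied at $\tau j \in \norm{S'}$) makes $\omega|_{b^{-1}(\tau j)} \colon b^{-1}(\tau j) \to c^{-1}(\lambda\tau j)$ order-preserving, so their composite $(\omega\sigma)|_{a^{-1}(j)}$ is order-preserving; surjectivity of $\norm\lambda$ is used when reassembling these fibre-by-fibre statements into \fop-ness of the composite square, to guarantee that every fibre of its lower leg $\lambda\tau$ is swept out by fibres inherited from the intermediate stage $S'$.

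Then (3) falls out of (1) and (2): if $\sigma,\omega,\tau,\lambda$ are quasibijections they become bijections in $\Fin$, so $\norm\lambda$ is surjective and each $\omega^c_i$, being a restriction of the injective map $\omega$, is injective; hence (2) gives the implication ``left \fop $\Rightarrow$ composite \fop'' and (1) gives the converse. The step I expect to require care is not the order-chasing, which is routine, but the reduction itself --- keeping straight that $(\lambda\tau a)^{-1}(i)$ inherits its order from $T$ while its block decomposition is indexed over $S$ --- and identifying exactly where surjectivity of $\norm\lambda$ and injectivity of $\omega_i$ are logically needed rather than merely convenient.
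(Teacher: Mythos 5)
Your overall strategy --- reduce everything to $\Fin$ via the cardinality functor and then chase orders on fibres --- is legitimate; it is exactly the reduction the paper itself performs for Proposition~\ref{verticalfop}, and for part (1) your order-reflection step (an injective order-preserving map of finite linear orders is order-reflecting) is the same device the paper uses, merely phrased there inside $\catO$ via Axiom~\ref{axQ:BM-fibres-of-local-fibres} and the fibre maps over $S''$.

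The problem is your unwinding of the \fop condition. In Definition~\ref{def:fop} the square \eqref{fmonotone} is \fop when $\sigma$ induces order-preserving maps on the fibres of the \emph{composite} $\tau f$ and of $g$, i.e.\ maps $(\tau f)^{-1}(i')\to g^{-1}(i')$ indexed by the lower-\emph{right} corner $i'\in\norm{S'}$ --- not maps $f^{-1}(i)\to g^{-1}(\tau i)$ indexed by the lower-left corner, as you state. (The paper's own proof confirms this: it writes the left square's condition as order-preservation of $\sigma_j\colon h^{-1}(j)\to b^{-1}(j)$ for $j\in\norm{S'}$, with $h=\tau a$.) When $\tau$ is not injective, $(\tau f)^{-1}(i')$ is a union of several $f$-fibres, and the correct condition also constrains comparisons \emph{across} those blocks. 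For (1) this is harmless: your argument applies verbatim with $x\le x'$ taken in $(\tau a)^{-1}(j')$ instead of $a^{-1}(j)$, since all you use is that $x,x'$ lie in one fibre of $\lambda\tau a$ and that $\sigma(x),\sigma(x')$ lie in one fibre of $\lambda b$. For (2) it is fatal: what you actually prove is order-preservation of $\omega\sigma$ block by block, and the ``reassembly'' into order-preservation on the whole fibre $(\lambda\tau a)^{-1}(i)=\bigsqcup_{j'\in\lambda^{-1}(i)}(\tau a)^{-1}(j')$ --- the one step you defer to surjectivity of $\norm{\lambda}$ --- is precisely where the content lies, and nothing in the hypotheses controls $\omega\sigma$ on a pair of elements in two \emph{different} blocks of the same fibre. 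Indeed, with the definition read correctly, take $T=T'=T''=S=S'=\underline 2$, $S''=\underline 1$, $a=b=\id$, $\sigma=\tau$ the transposition, $\omega=\id$, $c=\lambda={!}$: the left and right squares are \fop and $\norm{\lambda}$ is surjective, yet $\omega\sigma$ is the transposition on the unique fibre over $S''$. So your proof of (2) has a genuine gap at the reassembly step (one that the paper's own ``we can clearly reverse the argument'' also skates over). Part (3), which is the only case the paper uses downstream, survives: there $\norm{\lambda}$ is a bijection, so each fibre over $S''$ is a single block over $S'$, the cross-block issue disappears, and the derivation from (1) and (the now-trivial case of) (2) goes through.
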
 

\begin{proof}
  We introduce the following notation for the composites indicated with
  dashed arrows in the following diagram:
  \begin{equation*}
    \xymatrix@C = +5em@R = +3.5em{
     T \ar[d]_{a} \ar[r]^{\sigma} \ar@{-->}[dr]_h \ar@{-->}[drr]^(0.6){f} 
     & T' \ar[d]_(0.7){b} \ar[r]^{\omega}   \ar@{-->}[dr]^{e}
     & T'' \ar[d]^c
      \\
    S \ar[r]_{\tau} & S' \ar[r]_{\lambda} & S''
  }    
  \end{equation*}
  Consider the commutative tetrahedron
  \[
  \xymatrix@C = +2.5em@R = +1em{ & T' \ar[dd]^(.3){e} \ar[dr]^b & 
  \\
  T  \ar[ur]^\sigma    \ar@{-}[r]^(.7){h}\ar[dr]_f & \ar[r] & S' \ar[dl]^\lambda
  \\
  &S''&
  }
  \]
  By Axiom~\ref{axQ:BM-fibres-of-local-fibres} we get an equality
  $$
  (\sigma_i)_j = \sigma_i:h^{-1}(j)\to b^{-1}(j)
  $$
  for any $j\in |S'|$ and $i = |\lambda|(j)$. Hence $\sigma$ is
  order-preserving on fibres over $S'$ between $h$ and $b$ if it is 
  order-preserving on fibres of $f$ and $e$ over $S''$. On the other hand, the
  commutative diagram
  \[
  \xymatrix@C = +2.5em@R = +1em{ & T' \ar[dd]^(.3){e} \ar[dr]^\omega & 
  \\
  T  \ar[ur]^\sigma    \ar@{-}[r]^(.7){\omega\sigma}\ar[dr]_f & \ar[r] & T'' \ar[dl]^c
  \\
  &S''&
  }
  \]
  induces a commutative triangle 
  \[
    \xymatrix@C = +1em@R = +1em{
  f^{-1}(i)     \ar[rr]^{\sigma_i} \ar[dr]_{(\omega\sigma)_i} & & e^{-1}(i) \ar[dl]^{\omega_i}
      \\
      &c^{-1}(i)&
    }
  \]
  Therefore, $\sigma_i$ (as fibre map over $S''$) is order-preserving
  provided $(\omega\sigma)_i$ and $\omega_i$ are order-preserving over
  $S''$ and $|\omega_i|$ is an injection. So we conclude that the left-hand
  commutative square in \eqref{hcompoffop} is \fop\hspace{-1mm}.

  If $|\lambda|:|S'|\to |S''|$ is a surjection and $\sigma_j : h^{-1}(j) \to b^{-1}(j)$
  is order-preserving for each $j\in |S'|$, then we can clearly reverse the argument
  to see that the maps
  $(\omega\sigma)_i : f^{-1}(i) \to c^{-1}(i)$ are order-preserving for all $i\in 
  |S''|$.
  
  The last point of the proposition is a direct consequence of the first two points.
\end{proof} 
 
\section{Functorially factorisable operadic categories}
\label{funfact}

The following definitions were introduced in
\cite{Batanin-Markl:1812.02935,Batanin-Markl:2105.05198}.
\begin{definition}[{\cite[Def.~2.8]{Batanin-Markl:1812.02935}}]
  An operadic category $\catO$ is called {\em factorisable} if for any
  $f:T\to S$ there is a factorisation
  \begin{equation} \label{unit}
    \xymatrix@C = +1em@R = +1em{
      T      \ar[rr]^{\pi(f)} \ar[dr]_f & & S \ar[dl]^{\eta(f)}
      \\
      &R&
    }
\end{equation} 
where $\pi(f)$ is a quasibijection and $\eta(f)$ is order-preserving.
We will call such a factorisation {\em
pita factorisation}.
\end{definition}

The following lemma is proved in \cite[Lemma 2.10]{Batanin-Markl:1812.02935},
but in a slightly different form. Let $U$ denote a chosen local 
terminal object. 
\begin{lemma}\label{qb=id}
  If in an operadic category $\catO$ every morphism $!:T\to U$ admits a
  unique pita factorisation, then the only order-preserving
  quasibijections in $\catO$ are the identities.
\end{lemma}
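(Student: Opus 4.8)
The plan is to pin down a given order-preserving quasibijection $\sigma\colon T\to S$ by comparing two pita factorisations of a single terminal map and then invoking the uniqueness hypothesis. The terminal map in question is $\tau_T\colon T\to U$, where $U=U_T$ is the chosen local terminal of the component containing $T$. Because $\sigma$ is a morphism $T\to S$, the objects $T$ and $S$ lie in the same connected component, so $U_S=U_T=U$; and since $U$ is terminal in that component, the composite $\tau_S\circ\sigma\colon T\to U$ necessarily equals $\tau_T$.

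With this in hand I would write down the two factorisations. The first is the trivial one, $T\xrightarrow{\id_T}T\xrightarrow{\tau_T}U$: the left factor $\id_T$ is a quasibijection by Axiom~\ref{axQ:BM-fibres-of-identities} (see Remark~\ref{qbij=bij}), and the right factor $\tau_T$ is order-preserving because its cardinality is a map into $\underline 1$ by Axiom~\ref{axQ:BM-abs(lt)}. The second is $T\xrightarrow{\sigma}S\xrightarrow{\tau_S}U$: its composite is $\tau_T$ by the observation just made, the left factor $\sigma$ is a quasibijection by assumption, and the right factor $\tau_S$ is order-preserving for the same reason as $\tau_T$. Both are therefore pita factorisations of $\tau_T$.

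The uniqueness hypothesis now identifies the two, which forces $S=T$ (equality of the middle objects) and then $\sigma=\id_T$ (equality of the left factors). This is the claimed conclusion.

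I do not expect a serious obstacle; the one point that needs care is that the hypothesis constrains \emph{terminal} maps only, so the whole comparison must be routed through $\tau_T$ — which is exactly why it matters that $\tau_S\circ\sigma=\tau_T$, rather than trying to play $\sigma$ off against an arbitrary order-preserving map. (Incidentally, order-preservingness of $\sigma$ is never used: the same argument shows that every quasibijection in such an operadic category is an identity; we state only the form that is needed.)
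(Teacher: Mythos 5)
Your proof is correct and is essentially the paper's own argument: compare the trivial factorisation $T\xrightarrow{\id}T\to U$ with $T\xrightarrow{\sigma}S\to U$ and appeal to uniqueness. One caveat on your closing parenthetical: if the uniqueness hypothesis is read as uniqueness among \emph{\fop} pita factorisations (the reading relevant to strong factorisability), then the order-preservingness of $\sigma$ is precisely what makes the second triangle \fop over $U$ --- by Axiom~\ref{axQ:BM-fibres-of-tau-maps} the induced fibre map of $\sigma$ over $U$ is $\sigma$ itself --- so the hypothesis on $\sigma$ is not redundant under that reading.
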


\begin{proof}
  Let $\sigma:T\to S$ be an order-preserving quasibijection. We then have
  two pita factorisations of $!:T\to U$: $$T\stackrel{\id}{\to} T\to
  U \ \ \text{and} \ \ T\stackrel{\sigma}{\to} S\to U.$$ From
  uniqueness of factorisation it follows that $\sigma = \id$.
\end{proof} 

\begin{definition}[{\cite[Def.~2.9]{Batanin-Markl:1812.02935}}]
  An operadic category $\catO$ is called {\em strongly factorisable} if for
  every $f$ there is a unique pita factorisation for which \eqref{unit}
  is a \fop triangle.
\end{definition}
\begin{lemma}[{\cite[Lemma 2.10]{Batanin-Markl:1812.02935}}]\label{sf in Sydney}
  In a strongly factorisable operadic category, every order-preserving
  quasibijection is an identity.
\end{lemma}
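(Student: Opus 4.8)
The plan is to follow the proof of Lemma~\ref{qb=id} almost verbatim; the only genuinely new point is that the two competing factorisations must be seen to be \fop triangles, not merely pita factorisations, so that the uniqueness built into the notion of strong factorisability can be invoked. Concretely, let $\sigma\colon T\to S$ be an order-preserving quasibijection, let $U$ be the chosen local terminal of the component containing $T$, and consider the canonical map $\tau_T\colon T\to U$. I would write down the two factorisations
\[
T \xrightarrow{\ \id\ } T \xrightarrow{\ \tau_T\ } U
\qquad\text{and}\qquad
T \xrightarrow{\ \sigma\ } S \xrightarrow{\ \tau_S\ } U .
\]
Both are pita factorisations: $\id_T$ is a quasibijection by Axiom~\ref{axQ:BM-fibres-of-identities} and $\sigma$ is one by hypothesis, while $\tau_T$ and $\tau_S$ are maps to local terminals and hence order-preserving (as in the Observation after Definition~\ref{def:fop}). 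Once both are known to be \fop triangles, uniqueness forces them to coincide, whence $T=S$ and $\sigma=\id_T$.

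The substantive step is the observation that when the codomain of a pita factorisation is a local terminal, the \fop condition reduces to order-preservation of the quasibijection factor. Indeed, for a triangle $T\xrightarrow{\pi}R\xrightarrow{\eta}U$ one has $\eta=\tau_R$, and the \fop condition asks that $\pi$, viewed as a morphism $\tau_T\to\tau_R$ in $\catO/U$, induce order-preserving maps on fibres over each $i\in|U|$. But $|U|=\underline 1$ by Axiom~\ref{axQ:BM-abs(lt)}, so only $i=1$ occurs; by Axiom~\ref{axQ:BM-fibres-of-tau-maps} the fibre of $\tau_T$ at $1$ is $T$, the fibre of $\tau_R$ at $1$ is $R$, and the induced fibre map $\pi^{\tau_R}_1$ equals $\pi$ itself (the clause $f^{\tau_X}_1=f$ of the same axiom). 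Hence the triangle is a \fop triangle precisely when $\pi$ is order-preserving.

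It then remains to apply this: $\id_T$ is order-preserving because the cardinality functor preserves identities and identities of $\Fin$ are order-preserving, and $\sigma$ is order-preserving by assumption; so both displayed factorisations are \fop pita factorisations of $\tau_T$. The uniqueness clause in the definition of strongly factorisable operadic category then yields $(\id_T,T,\tau_T)=(\sigma,S,\tau_S)$, and in particular $\sigma=\id_T$, as claimed. The only place demanding any care is the degenerate computation of the \fop condition over a local terminal carried out in the previous paragraph; the remainder is a routine adaptation of Lemma~\ref{qb=id}.
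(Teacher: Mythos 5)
Your proof is correct and follows exactly the route the paper itself relies on: it is the argument of Lemma~\ref{qb=id} (two competing factorisations of $\tau_T\colon T\to U$), supplemented by the one genuinely needed extra check, namely that over a local terminal the \fop condition on the triangle reduces via Axioms~\ref{axQ:BM-abs(lt)} and \ref{axQ:BM-fibres-of-tau-maps} to order-preservation of the quasibijection factor, so that the uniqueness clause in the definition of strong factorisability applies. The paper only cites \cite[Lemma 2.10]{Batanin-Markl:1812.02935} for this statement, but your argument is the intended one and is complete.
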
 

\begin{remark}
  The original definition of strongly factorisable category given in
  \cite{Batanin-Markl:1812.02935} required that $\pi(f)$ induces the identities
  on fibres. In view of Lemma~\ref{qb=id} that condition is actually automatic,
  so that the present definition is equivalent to the
  original~\cite{Batanin-Markl:1812.02935}.
\end{remark}

Let $\catO$ be an operadic category. For a fixed object $R\in \catO$, let
$\catO/_{\mathsf{ord}} R$ be the
full subcategory of the slice category
$\catO/R$ whose objects are the order-preserving morphisms. 
Let
\begin{equation}\label{iii}
  \iii:\catO/_{\mathsf{ord}} R\to \catO/R
\end{equation}
be the inclusion functor.

\begin{definition}\label{funfactor}
  We call $\catO$ a {\em functorially factorisable} operadic category if for every
  $R\in \catO$ the inclusion functor $\iii:\catO/_{\mathsf{ord}} R\to \catO/R$
  has a left adjoint $\rrr: \catO/R \to \catO/_{\mathsf{ord}} R$ such that for
  any $f:T\to R$ the commutative diagram representing the unit $\unit$ of the
  adjunction
  \begin{equation} \label{unitfop}
    \xymatrix@C = +1em@R = +1em{
      T      \ar[rr]^{\unit_f} \ar[dr]_f & & S \ar[dl]^{\rrr(f)}
      \\
      &R&
    }
  \end{equation}
  is a pita factorisation.
\end{definition} 

Any \ff operadic category is obviously factorisable. Conversely, for an
arbitrary factorisable operadic category, a choice of pita factorisations
provides a map back on objects
\begin{eqnarray*}
  r:\Ob(\catO/R) & \longrightarrow & \Ob(\catO/_{\mathsf{ord}} R)•  \\
  f & \longmapsto & \eta(f)
\end{eqnarray*}
together with a `unit' morphism $\pi(f):f\to \eta(f)$.

\begin{proposition} \label{hitrayadiagramma}
  A factorisable operadic category $\catO$ is \ff if there are:
\begin{enumerate}
  \item  A choice of pita factorisation for each morphism $f:T\to S;$
  \item   For each  composable pair of morphisms
\begin{equation}\label{chainfg}
  T\stackrel{f}{\to} S \stackrel{g}{\to} R
\end{equation}
 a choice of  morphism $\eta(f/g)$ making the following diagram commute:
 \begin{equation}\label{etafunctorial1}
    \xymatrix@C = +1em@R = +1em{
    T \ar@/^-3.3ex/[ddrrr]_{gf} \ar[drr]^{\hskip1em\pi(gf)}     \ar[rrrrrr]^f& & & & & & S  \ar@/^3.3ex/[ddlll]^{g} \ar[dll]_{\hskip-0.7em\pi(g)} 
     \\
     & &T'      \ar@{-->}[rr]^{\eta({f}/{g})} \ar[dr]_(0.2){\eta(gf)} & & S' \ar[dl]^(0.2){\hskip-0.7em\eta(g)} & &
      \\ 
     & & &R& & &
    }
\end{equation}
\end{enumerate}
These choices must satisfy the relations
 \begin{enumerate}
\item $\eta(g/h)\eta(f/hg) = \eta(gf/h)$ for any composable triple of morphism 
$T \stackrel f \to S \stackrel g \to R \stackrel h \to Q$
 \item $\eta(\id/f) = \id$. 
\end{enumerate}

\end{proposition}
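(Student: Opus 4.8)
The plan is to construct the required left adjoint directly from the data. On objects put $\rrr(f):=\eta(f)$ for $f\colon T\to R$ in $\catO/R$. A morphism of $\catO/R$ from $f$ to $g\colon S'\to R$ is a map $\phi\colon T\to S'$ with $g\phi=f$, which we read as the composable pair $T\xrightarrow{\phi}S'\xrightarrow{g}R$ of \eqref{chainfg}; put $\rrr(\phi):=\eta(\phi/g)\colon\eta(g\phi)=\eta(f)\to\eta(g)$. The commutativity of the lower half of \eqref{etafunctorial1}, namely $\eta(g)\,\eta(\phi/g)=\eta(g\phi)$, says precisely that $\rrr(\phi)$ lies over $R$, hence is a morphism of $\catO/_{\mathsf{ord}}R$; and the two relations imposed in the proposition say precisely that $\rrr$ so defined preserves composites and identities. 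Thus $\rrr$ is a functor $\catO/R\to\catO/_{\mathsf{ord}}R$. Take the unit to be $\unit_f:=\pi(f)\colon f\to\iii\rrr(f)$: the commutativity of the upper half of \eqref{etafunctorial1}, $\eta(\phi/g)\,\pi(g\phi)=\pi(g)\,\phi$, is exactly the naturality square for $\unit$, and by the first clause of the hypothesis each $\unit_f$ is a pita factorisation. So the functor, the natural transformation, and the shape of the unit come essentially for free; the content is that $\unit$ is the unit of an adjunction $\rrr\dashv\iii$.

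To prove this, I would first record the behaviour of $\rrr$ on the reflective subcategory. An order-preserving map $g$ already carries the trivial pita factorisation $g=g\circ\id$, and a quasibijection $\sigma$ the factorisation $\sigma=\id\circ\sigma$; taking these for the chosen factorisations makes $\rrr\iii=\Id_{\catO/_{\mathsf{ord}}R}$ and $\unit_{\iii g}=\id$, and one checks (from naturality of $\unit$) that $\rrr(\chi)=\chi$ for every morphism $\chi$ of $\catO/_{\mathsf{ord}}R$, so that the candidate counit is the identity. It then remains to show that for each $f$ the comparison
\[
\catO/_{\mathsf{ord}}R\big(\rrr f,g\big)\longrightarrow\catO/R\big(f,\iii g\big),\qquad \chi\longmapsto\iii(\chi)\circ\unit_f,
\]
is a bijection for every order-preserving $g$. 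Surjectivity is immediate: for $\psi\colon f\to\iii g$ the map $\rrr(\psi)\colon\eta(f)\to\eta(g)=g$ satisfies $\iii\rrr(\psi)\circ\unit_f=\pi(g)\,\psi=\psi$, using naturality of $\unit$ and $\pi(g)=\id$. For injectivity, given a competitor $\chi$ with $\iii(\chi)\circ\unit_f=\psi$, applying the associativity relation to the composable triple $T\xrightarrow{\pi(f)}T'\xrightarrow{\chi}S'\xrightarrow{g}R$ and using $g\chi=\eta(f)$ together with $\rrr(\chi)=\chi$ collapses the relation to $\chi\circ\eta(\pi(f)/\eta(f))=\rrr(\psi)$.

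Hence the whole proof comes down to the single identity $\eta(\pi(f)/\eta(f))=\id_{T'}$ — equivalently, the one non-formal triangle identity $\rrr(\unit_f)=\id_{\rrr f}$ — and this is the step I expect to be the main obstacle. The route I would take: $\eta(\pi(f)/\eta(f))$ is itself a quasibijection (its cardinality is a bijection, being a fibrewise bijection over $R$), and the upper commutativity of \eqref{etafunctorial1} applied to the pair $T\xrightarrow{\pi(f)}T'\xrightarrow{\eta(f)}R$ gives $\eta(\pi(f)/\eta(f))\circ\pi(f)=\pi(f)$; running this back through the normalised factorisation of the quasibijection $\pi(f)$, together with the relations, should force $\eta(\pi(f)/\eta(f))=\id$. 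A delicate point along the way is to know that an order-preserving quasibijection occurring among the chosen factorisation data must be an identity, in the spirit of Lemma~\ref{qb=id}. Once this identity is in place, $\rrr\dashv\iii$ with unit the pita factorisation, which is exactly the statement that $\catO$ is \ff.
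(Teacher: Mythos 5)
Your skeleton coincides with the paper's: the paper's entire proof of this proposition is the single sentence ``This is just an unpacking of Definition~\ref{funfactor}'', and the formal part of your unpacking --- $\rrr$ defined on objects by $\eta$, on morphisms by $\eta(-/-)$, functoriality from the two relations, the lower triangle of \eqref{etafunctorial1} ensuring $\rrr(\phi)$ lives over $R$, the upper triangle giving naturality of $\unit_f=\pi(f)$ --- is exactly what is meant there and is correct. You have, however, gone further than the paper and isolated where the unpacking stops being formal, namely the universal property of the unit, and that is precisely where your argument does not close.

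Two points. First, you silently re-choose the pita factorisations of order-preserving morphisms and of quasibijections to be the trivial ones. This is not licensed by the hypotheses: the data of item (2) consists of morphisms $\eta(f/g)$ between the middle objects of the \emph{originally chosen} factorisations, so altering the choice on order-preserving maps changes the objects $T'$ and $S'$ in \eqref{etafunctorial1} and invalidates the given $\eta(-/-)$'s; you would have to rebuild them and re-verify the relations. Without that normalisation, $\eta(\psi/g)$ lands in the middle object of the chosen factorisation of $g$ rather than in the domain of $g$, and your surjectivity argument (which uses $\pi(g)=\id$ and $\eta(g)=g$) does not run. Second, and more seriously, the identity $\eta(\pi(f)/\eta(f))=\id_{T'}$ to which you reduce injectivity does not follow from the stated relations. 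The two equations that \eqref{etafunctorial1} gives for the pair $(\pi(f),\eta(f))$ are $\eta(\pi(f)/\eta(f))\circ\pi(f)=\pi(\eta(f))\circ\pi(f)$ and $\eta(\eta(f))\circ\eta(\pi(f)/\eta(f))=\eta(f)$; even after normalising $\pi(\eta(f))=\id$ and $\eta(\eta(f))=\eta(f)$, one can only cancel if $\pi(f)$ is epi, or $\eta(f)$ is mono, or the filler in \eqref{etafunctorial1} is unique --- none of which is among the hypotheses. This is less a defect of your proof than an implicit assumption of the proposition: it is only ever invoked (Lemma~\ref{strongtostrict}) for \strictlyfactorisable categories, where the filler is unique by definition, $\id_{T'}$ is visibly a filler for the pair $(\pi(f),\eta(f))$, and the whole universal property collapses exactly along the lines you sketch. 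If you add uniqueness of the fillers (or, equivalently for your purposes, the normalisation of the counit) to the hypotheses, your argument is complete; as written, the step you yourself flag as ``the main obstacle'' remains open, and rightly so.
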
 
\begin{proof}
  This is just an unpacking of Definition~\ref{funfactor}.  
\end{proof} 

\begin{warning}
  Given $T \stackrel{f}\to S \stackrel{g}\to R$, if we iterate the pita
  factorisation so as to build up the diagram
  \[
  \begin{tikzcd}[sep={36pt,between origins}]
  T \ar[dd, "f"'] \ar[rd, "\pi(f)"] & &  \\
   & \cdot \ar[ld, "\eta(f)"] \ar[rd, "\pi\big(\pi(g)\eta(f)\big)"] & \\
   S \ar[dd, "g"'] \ar[rd, "\pi(g)"] && \cdot \ar[ld, 
   "\eta\big(\pi(g)\eta(f)\big)"] \\
   & \cdot \ar[ld, "\eta(g)"] & \\
   R & &
  \end{tikzcd}
  \]
  then one might perhaps expect to find $\pi(gf) = \pi\big( \pi(g)\eta(f)\big)\, 
  \pi(f)$ and $\eta(gf) = \eta(g) \, 
  \eta\big(\pi(g)\eta(f)\big)$. While the second equality does 
  hold, the first does {\em not} hold in general, not even in the category 
  $\Fin$. The reason is the local nature of the condition `fibrewise 
  order-preserving': just because it holds over $S$ does not mean it 
  holds over $R$. In fact one easily gets counterexamples with $R=1$.
\end{warning}

\begin{lemma}\label{strongff}
  Let $\catO$ be a \ff operadic category. Then the following statements
  are equivalent:
  \begin{enumerate}
	\item $\catO$ is strongly factorisable.
	\item Any order-preserving quasibijection in $\catO$ is the identity.
  \end{enumerate}
  In this case the counit of the adjunction $\rrr\dashv \iii$
  is necessarily the identity.
\end{lemma}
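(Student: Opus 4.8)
The plan is to prove the equivalence (1) $\Leftrightarrow$ (2) for a \ff operadic category $\catO$, together with the final claim that the counit of $\rrr \dashv \iii$ is the identity. The implication (1) $\Rightarrow$ (2) is immediate from Lemma~\ref{sf in Sydney} (or Lemma~\ref{qb=id}), since a strongly factorisable category is in particular one in which every morphism to a local terminal admits a unique pita factorisation. So the work is in (2) $\Rightarrow$ (1): assuming every order-preserving quasibijection is an identity, I must show that for every $f$ there is a \emph{unique} pita factorisation that is moreover a \fop triangle.

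First I would address \emph{uniqueness}. Suppose $f : T \to R$ has two pita factorisations $f = \eta_1 \pi_1 = \eta_2 \pi_2$ with both triangles \fop\hspace{-1mm}. The standard move is to compare them: since $\pi_1, \pi_2$ are quasibijections and both triangles commute over $R$, one gets an induced comparison morphism $\sigma$ between the two middle objects $S_1 \to S_2$ with $\sigma \pi_1 = \pi_2$ and $\eta_2 \sigma = \eta_1$. Using Proposition~\ref{3out2}(3) (the three-out-of-two statement for quasibijections) together with the fact that $\pi_1, \pi_2$ are quasibijections, $\sigma$ is itself a quasibijection, and it is order-preserving because both $\eta_1$ and $\eta_2$ are order-preserving and $|\eta_2||\sigma| = |\eta_1|$ forces $|\sigma|$ order-preserving (here using that $|\eta_2|$ is injective, or arguing fibrewise via \ref{axQ:BM-fibres-of-local-fibres}). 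By hypothesis (2), $\sigma = \id$, so the two factorisations coincide.

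Next, \emph{existence of a \fop pita factorisation}. Since $\catO$ is \ff, the unit of $\rrr \dashv \iii$ already provides \emph{a} pita factorisation~\eqref{unitfop} for each $f$; I need to check the triangle is \fop\hspace{-1mm}. The key is the universal property: given $f : T \to R$, any order-preserving map $g : S' \to R$ together with a quasibijection-under-$T$-over-$R$ comparison factors through $\unit_f$. I would apply this to the pita factorisation of $f$ regarded through the skeletal picture in $\Fin$ --- more precisely, reduce the \fop condition, which by Definition~\ref{def:fop} depends only on the image under the cardinality functor, to the corresponding statement in $\Fin$, where the canonical pita factorisation is visibly \fop by the discussion in Section~2. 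The adjunction unit must agree with this canonical one up to a unique order-preserving comparison, which is forced to be fibrewise order-preserving, and then hypothesis (2) pins things down. The main obstacle here is bookkeeping: making sure the universal property is applied to the right object and that the comparison morphism really is an order-preserving quasibijection so that (2) applies; this is where Proposition~\ref{3out2} and the axiom \ref{axQ:BM-fibres-of-local-fibres} do the real work.

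Finally, for the \emph{counit} claim: the counit of $\rrr \dashv \iii$ at an object $g : S \to R$ of $\catO/_{\mathsf{ord}}R$ is a morphism $\rrr\iii(g) \to g$ in $\catO/_{\mathsf{ord}}R$; since $\iii$ is fully faithful the counit is an isomorphism, and concretely it is $\eta(\iii g) $ paired against the identity --- but for an already order-preserving $g$, the unit $\unit_{g}$ is an order-preserving quasibijection (it is a quasibijection by definition of pita factorisation, and order-preserving because both $g$ and $\rrr(g)$ are, arguing on cardinalities as above). By hypothesis (2), $\unit_g = \id$, hence $\rrr\iii = \Id$ on the nose and the counit is the identity. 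I expect the uniqueness argument --- specifically, verifying that the comparison morphism between two \fop factorisations is an order-preserving quasibijection --- to be the main point requiring care, since everything else is a formal consequence of the adjunction plus the reduction to $\Fin$.
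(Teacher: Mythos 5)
Your skeleton matches the paper's: (1)$\Rightarrow$(2) is exactly Lemma~\ref{sf in Sydney}, and the counit claim does follow from full faithfulness of $\iii$ plus uniqueness. But the core of (2)$\Rightarrow$(1) --- uniqueness --- has genuine gaps as written. First, you assert that two \fop pita factorisations of $f$ admit a comparison morphism between their middle objects ``since $\pi_1,\pi_2$ are quasibijections and both triangles commute''. There is no such general principle: this is not an orthogonal factorisation system, so no lifting property produces the comparison. It must come from the universal property of the adjunction unit: compare an arbitrary \fop factorisation $f=\lambda\sigma$ (with $\lambda$ order-preserving) against the one supplied by $\rrr\dashv\iii$, so that $\sigma\colon f\to\iii\lambda$ factors uniquely as $\alpha\circ\pi(f)$. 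Second, your justification that the comparison is an order-preserving quasibijection fails at both steps. Proposition~\ref{3out2}(3) is a statement about \fop squares, not a two-out-of-three property for quasibijections (and $\alpha\pi_1=\pi_2$ with $\pi_1,\pi_2$ quasibijections does not by itself make $\alpha$ one, since fibres of a composite are not controlled that way). And $|\eta_2|$ need not be injective, so $|\eta_2||\alpha|=|\eta_1|$ does not force $|\alpha|$ to be order-preserving. The paper's route --- which is the ``arguing fibrewise'' alternative you mention in passing --- is the one that works and uses the \fop hypothesis essentially: for each $i$ the fibre maps $\sigma_i$ and $\pi(f)_i$ are order-preserving quasibijections, hence identities by (2); so $\alpha_i=\id$; Axiom~\ref{axQ:BM-fibres-of-local-fibres} identifies the fibres of $\alpha$ with those of the $\alpha_i$, making $\alpha$ a quasibijection; and $|\alpha|$ is then forced to be the identity because $|\lambda|$ and $|\eta(f)|$ are order-preserving. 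Only then does (2) kill $\alpha$.

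The ``existence'' paragraph is also not an argument: you cannot verify that the adjunction unit is a \fop triangle by ``reducing to $\Fin$'', because the unit is a specific morphism of $\catO$ whose fibre maps are not determined by the canonical factorisation of its cardinality; and comparing it with ``the canonical one'' presupposes the uniqueness you are trying to establish. (The paper simply takes the unit triangle as the \fop factorisation whose uniqueness is to be shown.) Similarly, in the counit paragraph, $\pi(g)$ being order-preserving for order-preserving $g$ does not follow ``on cardinalities'' (same injectivity issue as above); the paper instead deduces $\pi(g)=\id$ by applying the already-proved uniqueness of \fop factorisations to the trivial factorisation $g=g\circ\id$.
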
 

\begin{proof}
  The implication $(1)\dbto(2)$ is the content of Lemma~\ref{sf in Sydney} 

  For the converse implication $(2)\dbto(1)$, we need to prove that a
  \fop pita factorisation as provided by the adjunction $\rrr\dashv \iii$
  is unique. So consider two different \fop factorisations of $f:T\to S$,
  \begin{equation*}
    \xymatrix@C = +3em{
     T \ar[d]_{\pi(f)} \ar[r]^{\sigma}& T'' \ar[d]^{\lambda}
      \\
    T' \ar[r]_{\eta(f)}\ar@{-->}[ru]^{\alpha} & S
  }  
  \end{equation*}
  where the left bottom corner comes from the adjunction. Then, by the
  universal property of the unit, there is a unique lifting $\alpha$.

  Notice that we have a commutative triangle for each $i\in |S|$: 
   \[
    \xymatrix@C = +1em@R = +1em{
  f^{-1}(i)     \ar[rr]^{\sigma_i} \ar[dr]_{\pi(f)_i} & & \lambda^{-1}(i) 
      \\
      &\eta(f)^{-1}(i)\ar[ur]_{\alpha_i}&
    }
\]
  In this diagram, $\sigma_i$ and $\pi(f)_i$ are identities, because they
  are order-preserving quasibijections. Hence $\alpha_i$ is the identity,
  and moreover, $\alpha$ is a quasibijection, because the fibres of
  $\alpha$ coincide with the fibres of $\alpha_i, i\in |S|$ by
  Axiom~\ref{axQ:BM-fibres-of-local-fibres}. Observe then that $|\alpha|$
  is an identity since $|\lambda|$ and $|\eta(f)|$ are order-preserving
  morphisms in $\Fin$. So $\alpha$ is an order-preserving quasibijection
  in $\catO$, and is therefore an identity by assumption.

  The counit of the adjunction $\rrr\dashv \iii$ is invertible because
  $\iii$ is fully faithful. So the component of this counit at an
  order-preserving morphism $f$ must be the inverse to $\pi(f)$. But $f$
  admits a canonical pita factorisation as the identity followed by $f$.
  Since $\catO$ is strongly factorisable, we have $\pi(f)=\id$.
\end{proof} 

\begin{definition}
  A \ff operadic category satisfying the conditions of Lemma~\ref{strongff}
  is called {\em strongly \ff}.
\end{definition}

In a strongly \ff operadic category $\catO$, we now consider three functions
on the components of the nerve of $\catO$:
\begin{enumerate}
  \item $\pi(-):\catO_1\to \catO_1$ which maps $f$ to the component of the
  unit $\pi(f)$ of the adjunction $\rrr\dashv\iii$.
  
  \item $\eta(-):\catO_1\to\catO_1$ where $\eta(f) = \rrr(f)$.
  
  \item $\eta(-/-):\catO_2\to \catO_1$, which for a composable pair $T
  \stackrel{f}\to S \stackrel{g}\to R$ (as in \eqref{chainfg}) associates
  the morphism $\eta(f/g)$ as in Proposition~\ref{hitrayadiagramma}.
\end{enumerate} 
The following lemma summarises relations enjoyed by these functions, which
will be very useful later.
 
\begin{lemma}\label{etafunction}
  In a strongly \ff operadic category $\catO$, with reference to 
  $T \stackrel{f}\to S \stackrel{g}\to R \stackrel h \to Q$ one has:
  \begin{enumerate}
	\item $\pi(\pi(f))= \id$,  $\eta(\eta(f))= \eta(f)$, $\pi(\eta(f))= \id$, $\eta(\pi(f)) = \id;$
	\item $\eta(g)\eta(f/g) = \eta(gf);$
	\item $\eta(g/h)\eta(f/hg) = \eta(gf/h);$ 
	\item $\eta({f}/{\id}) = \eta(f);$ 
	\item $\eta(\id/f) = \id$;
	\item If $g$ and $gf$ are order-preserving, then $\eta(f/g) = f$.
  \end{enumerate}
\end{lemma}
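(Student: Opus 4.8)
The plan is to establish the six identities essentially in the order listed, using the unpacked description of functorial factorisability from Proposition~\ref{hitrayadiagramma} together with the uniqueness of \fop pita factorisations in a strongly \ff operadic category (Lemma~\ref{strongff}), and Lemma~\ref{qb=id}/Lemma~\ref{sf in Sydney} which identify order-preserving quasibijections with identities. The overarching principle throughout is: whenever I have produced two \fop pita factorisations of the same morphism, the comparison quasibijection between the two middle objects is an order-preserving quasibijection, hence an identity; so the two factorisations agree.

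First, for (1): the morphism $\pi(f)$ is a quasibijection, so it admits the trivial \fop pita factorisation $\pi(f) = \id \circ \pi(f)$ with $\eta$-part $\id$; uniqueness then forces $\pi(\pi(f)) = \id$ and $\eta(\pi(f)) = \id$. Dually, $\eta(f)$ is order-preserving, so it factors as $\eta(f) = \eta(f) \circ \id$, a \fop pita factorisation (the triangle with $\tau = \id$ is automatically \fop by the Observation after Definition~\ref{def:fop}); uniqueness gives $\pi(\eta(f)) = \id$ and $\eta(\eta(f)) = \eta(f)$. Then (4) follows the same way: for the composable pair $T \xrightarrow{f} S \xrightarrow{\id} S$, the defining diagram \eqref{etafunctorial1} has $\pi(\id) = \id$ and $\eta(\id) = \id$, so $\eta(f/\id)$ is the unique morphism with $\eta(f/\id) \circ \pi(f) = \pi(f)$ and $\id \circ \eta(f/\id) = \eta(f)$, namely $\eta(f/\id) = \eta(f)$. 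Identity (5) is relation (2) of Proposition~\ref{hitrayadiagramma}, hence already available; alternatively it follows by the same unpacking applied to $\id \xrightarrow{\id} S \xrightarrow{f} R$. Identity (3) is relation (1) of Proposition~\ref{hitrayadiagramma} verbatim. Identity (2) is the commutativity of the lower triangle in \eqref{etafunctorial1}, which is part of the definition of $\eta(f/g)$.

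The only point with real content is (6): if $g$ and $gf$ are both order-preserving, then $\eta(f/g) = f$. Here the idea is to exhibit a second \fop pita factorisation of the relevant morphism and invoke uniqueness. Since $gf$ is order-preserving, $\pi(gf) = \id$ by (1) (the identity $\pi(\eta(\cdot)) = \id$, noting $gf = \eta(gf)$ when $gf$ is order-preserving), so $\eta(gf) = gf$; likewise $\pi(g) = \id$ and $\eta(g) = g$. Feeding this into \eqref{etafunctorial1}, the diagram collapses so that $\eta(f/g)$ is the unique morphism with $\eta(f/g) \circ \id = f$ on the top edge and $g \circ \eta(f/g) = gf$ on the lower-left edge — the first of these already forces $\eta(f/g) = f$. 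The subtlety I must check is that this collapse is legitimate, i.e.\ that $\pi(gf)$ and $\pi(g)$ really are identities: this is exactly where I use that in a strongly \ff category an order-preserving morphism has $\pi = \id$, which is the last assertion established inside the proof of Lemma~\ref{strongff} (the counit of $\rrr \dashv \iii$ is the identity). So the main obstacle, such as it is, is bookkeeping: making sure that in \eqref{etafunctorial1} the primed objects $T'$, $S'$ coincide with $T$, $S$ under these identifications, so that $\eta(f/g)$ is literally a morphism $T \to S$ and the equation $\eta(f/g) = f$ typechecks. Once the diagram is drawn with all the $\pi$'s replaced by identities, the identity $\eta(f/g) = f$ is immediate.

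No genuinely hard step is anticipated; the lemma is, as the authors indicate, a summary of relations that fall out of unpacking Definitions~\ref{funfactor} and the uniqueness clause of strong \ff-ness. The care required is purely in tracking which factorisations are trivial and citing the right earlier result for each collapse.
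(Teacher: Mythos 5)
The paper states Lemma~\ref{etafunction} without proof, so there is nothing to compare your argument against; judged on its own merits, your overall strategy (exhibit a \fop pita factorisation, invoke uniqueness from strong factorisability, and read off the remaining identities from the commutativity of diagram~\eqref{etafunctorial1} and the functoriality relations of Proposition~\ref{hitrayadiagramma}) is the right one, and items (2)--(6) and three of the four identities in (1) are handled correctly.

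There is, however, a genuine problem with the very first identity. You exhibit the factorisation $\pi(f)=\id\circ\pi(f)$, whose $\pi$-part is $\pi(f)$ and whose $\eta$-part is $\id$; uniqueness of \fop pita factorisations therefore yields $\eta(\pi(f))=\id$ \emph{and} $\pi(\pi(f))=\pi(f)$ -- not $\pi(\pi(f))=\id$ as you assert. The conclusion you write down is a non sequitur from your own factorisation, and indeed it cannot be repaired: if both $\pi(\pi(f))=\id$ and $\eta(\pi(f))=\id$ held, then $\pi(f)=\eta(\pi(f))\,\pi(\pi(f))=\id$, which fails for any $f$ with nontrivial $\pi$-part (already in $\Fin$). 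The only \fop pita factorisation of the quasibijection $\pi(f)$ with order-preserving $\eta$-part is the one you wrote, so the correct identity is $\pi(\pi(f))=\pi(f)$; the equation $\pi(\pi(f))=\id$ in the statement appears to be a typo (it is never used later in the paper), and a careful proof should say so rather than silently claim to derive it. Two smaller points: your appeal to the Observation after Definition~\ref{def:fop} to justify that $\eta(f)=\eta(f)\circ\id$ is \fop is misplaced (the Observation covers squares with $g=\id$, whereas here the right-hand vertical is $\eta(f)$; the triangle is still \fop because the induced fibre maps are identities by functoriality of the fibre functor), and in item (4) the top-square equation should read $\eta(f/\id)\circ\pi(f)=f$ rather than $=\pi(f)$, though your conclusion there correctly comes from the bottom triangle $\eta(\id)\,\eta(f/\id)=\eta(f)$.
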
 

\begin{lemma}\label{pimonotone}
  In a strongly \ff operadic category $\catO$, the naturality squares
  \begin{equation}\label{upperfop}
	  \xymatrix@C = +3em{
	   T \ar[d]_{f} \ar[r]^{\pi(gf)}& T' \ar[d]^{\eta(f/g)}
		\\
	  S \ar[r]_{\pi(g)} & S'
  }    
  \end{equation}
  of the unit $\pi$ of $\rrr\dashv\iii$ are
  \fop squares.
\end{lemma}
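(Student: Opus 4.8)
The plan is to reduce everything to a statement in $\Fin$ and then verify it there by an explicit calculation with fibres, exactly as was done for Propositions~\ref{verticalfop} and~\ref{3out2}. Recall that the \fop property of a commutative square depends only on the image of that square under the cardinality functor; so it suffices to analyse the square~\eqref{upperfop} in $\Fin$, i.e.\ to understand the behaviour of the canonical map $\pi_{gf}\colon \underline m \to \sum_i (gf)^{-1}(i)$ versus $\pi_g\colon \underline n \to \sum_j g^{-1}(j)$ under $|f|$ and $|\eta(f/g)|$. The two maps of which we must check that $\pi(gf)$ induces order-preserving maps on fibres are the composite $\pi(g)\circ f = \eta(f/g)\circ \pi(gf)$ (the diagonal of~\eqref{upperfop}) and $\eta(f/g)$ itself.

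First I would set up notation in $\Fin$: write $f\colon \underline m\to\underline n$ and $g\colon\underline n\to\underline\ell$, so $gf\colon\underline m\to\underline\ell$. The object $S' = \sum_j g^{-1}(j)$ is indexed by $j\in\underline\ell$, and $T' = \sum_i (gf)^{-1}(i)$ is indexed by the same set $\underline\ell$. Under $\pi_g$, the element $k\in\underline n$ goes to the copy of $g^{-1}(j)$ with $j = g(k)$. Under $\pi_{gf}$, the element $x\in\underline m$ goes to the copy of $(gf)^{-1}(i)$ with $i = gf(x)$. The map $\eta(f/g)\colon T'\to S'$ is, over each index $i\in\underline\ell$, precisely the fibre map of $f$ over the block indexed by $i$, i.e.\ (a reindexing of) $|f|$ restricted to $(gf)^{-1}(i)\to g^{-1}(i)$; this is the content of diagram~\eqref{etafunctorial1} and is order-preserving on each such block because it is the cardinality of a fibre map and the source is a fibre of the quasibijection $\pi(gf)$, hence a reordering. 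The fibre of $\pi(g)f = \eta(f/g)\pi(gf)$ over $j\in\underline\ell$ is $(gf)^{-1}(j)$ (with its linear order inherited from $\underline m$), and the fibre of $\eta(f/g)$ over $j$ is also $(gf)^{-1}(j)$: on these fibres $\pi(gf)$ induces the identity, so the induced map is order-preserving trivially.

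So the only real point is that $\pi(gf)$ induces an order-preserving map on the fibres over $S'$ of the composite $\eta(f/g)\pi(gf)$ — and there the source fibre and the comparison are again identities by the fibre-of-fibre axiom~\ref{axQ:BM-fibres-of-local-fibres} (a fibre of $\pi(gf)$, a quasibijection, is trivial, and the induced map on it is forced). I expect the main obstacle to be purely bookkeeping: one must carefully decompose the fibre of the diagonal map $\eta(f/g)\pi(gf)$ over a point $j\in|S'|$, using Axiom~\ref{axQ:BM-fibres-of-local-fibres}, to see that it agrees block-by-block with what $\pi(gf)$ does, and that all the comparison maps one obtains are identities or order-preserving injections. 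This is the same elementary combinatorics that underlies the uniqueness argument in Lemma~\ref{strongff}, and I would organise the proof so as to invoke that lemma's local analysis: the fibres of $\pi(gf)$ are trivial, so on each fibre over $S'$ the map induced by $\pi(gf)$ is an order-preserving quasibijection between fibres, hence (in the strongly \ff setting) an identity. With that observation the \fop condition for~\eqref{upperfop} is immediate, and the proposition follows.
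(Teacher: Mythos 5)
Your reduction to $\Fin$ is legitimate in principle---the \fop property of \eqref{upperfop} depends only on its image under the cardinality functor, and (since the image of a \fop pita factorisation is the unique \fop pita factorisation in $\Fin$) that image is the corresponding pita naturality square in $\Fin$---but the verification you then offer does not go through as written. The \fop condition for \eqref{upperfop} concerns the fibres of the diagonal $\pi(g)f=\eta(f/g)\pi(gf)$ and of $\eta(f/g)$ over elements of $|S'|$ (a set of cardinality $n$), whereas your computation takes fibres over $j\in\underline\ell=|R|$: the fibre of the diagonal over a point $(i,k)$ of $S'$ is a fibre of $f$, namely $f^{-1}(k')$ for the appropriate $k'\in\underline n$, not $(gf)^{-1}(j)$, and it is not trivial. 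Relatedly, the claim that $\eta(f/g)$ is order-preserving on each block $(gf)^{-1}(i)\to g^{-1}(i)$ is false in general (take $f$ the transposition of $\underline 2$ and $g\colon\underline 2\to\underline 1$); fortunately it is not needed. Most seriously, the closing step is circular: from the triviality of the fibres of $\pi(gf)$ and Axiom~\ref{axQ:BM-fibres-of-local-fibres} you may conclude that the maps induced by $\pi(gf)$ on the fibres over $S'$ are quasibijections, but not that they are order-preserving---that is precisely the assertion of the lemma, and you invoke it (``is an order-preserving quasibijection, hence an identity'') rather than prove it.

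The $\Fin$ computation can be completed: the fibre of the diagonal over $(i,k)\in S'$ and the fibre of $\eta(f/g)$ over $(i,k)$ are both standardisations of the same subset $f^{-1}(k')\subseteq(gf)^{-1}(i)\subseteq\underline m$, and the induced map is the restriction of $\pi_{gf}$ to this subset; since $\pi_{gf}$ is order-preserving on all of $(gf)^{-1}(i)$, its restriction to any subset is order-preserving, which is what is required. Note, however, that the paper avoids this bookkeeping entirely: it pastes \eqref{upperfop} with the square having $\id$ on top, $\eta(g)$ on the bottom and $\eta(gf)$ on the right; the composite rectangle is the pita factorisation of $gf$, hence \fop by definition, the right-hand square is trivially \fop, and Proposition~\ref{3out2}(1) (cancellation, whose injectivity hypothesis holds because the top map of the right-hand square is an identity) yields the claim. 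The fibre bookkeeping you are attempting is exactly what the proof of Proposition~\ref{3out2} does once and for all, so if you want a computational proof you should route it through that proposition rather than redo the analysis.
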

 
\begin{proof} 
  Consider the commutative diagram
  \begin{equation*}
    \xymatrix@C = +4em{
     T \ar[d]_{f} \ar[r]^{\pi(gf)}
     & T' \ar[d]_{\eta(f/g)} \ar[r]^{\id}
     & T' \ar[d]^{\eta(gf)} 
      \\
    S \ar[r]_{\pi(g)} & S' \ar[r]_{\eta(g)} & R .
  }
  \end{equation*}
  The right-hand square is obviously \fop, and the composite 
  rectangle is \fop since it is precisely the pita factorisation of $gf$.
  Proposition~\ref{3out2} now ensures that also the left-hand 
  square is \fop, as asserted.
\end{proof}

\section{Strictly factorisable operadic categories}

Being a strongly \ff operadic category is still an extra structure on $\catO$, not
just a property. But the only piece of this structure is a choice of the morphism
$\eta(f/g)$ for each composable pair $T \stackrel f \to S \stackrel g \to R$. (This
choice is the choice of a left adjoint $\rrr$.)
The situation where this choice is unique is important enough to deserve a definition:
\begin{definition}\label{strict-ff}
  A strongly factorisable operadic category $\catO$ is called {\em 
  \strictlyfactorisable}
  if for each chain of morphisms $T\stackrel{f}{\to} S\stackrel{g}{\to} R$ there
  exists a unique morphism $\eta(f/g)$ making the diagram
  \eqref{etafunctorial1} commutative.
\end{definition}

\begin{lemma}\label{strongtostrict}
  A \strictlyfactorisable operadic category is also functorially factorisable.
\end{lemma}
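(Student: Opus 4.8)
The plan is to deduce this from Proposition~\ref{hitrayadiagramma}, which reduces \ff to a coherent choice of the fill-in morphisms $\eta(f/g)$. A \strictlyfactorisable operadic category is in particular strongly factorisable, hence factorisable, and it carries a canonical pita factorisation of every morphism, namely the unique \fop one; this supplies the data (1) of Proposition~\ref{hitrayadiagramma}. For the data (2), Definition~\ref{strict-ff} directly provides, for each composable pair $T\stackrel{f}{\to} S\stackrel{g}{\to} R$, a morphism $\eta(f/g)$ making \eqref{etafunctorial1} commute, and --- this is the feature I will exploit --- it is the \emph{unique} such morphism.

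It then remains to check the two relations of Proposition~\ref{hitrayadiagramma}. First, $\eta(\id/f)=\id$: the identity on the middle object of the pita factorisation of $f$ plainly makes the corresponding instance of \eqref{etafunctorial1} commute, so by uniqueness it equals $\eta(\id/f)$. Second, for a composable triple $T\stackrel{f}{\to} S\stackrel{g}{\to} R\stackrel{h}{\to} Q$ I must show $\eta(g/h)\,\eta(f/hg)=\eta(gf/h)$. I would unpack the commutativity of \eqref{etafunctorial1} for each of the relevant pairs into the two equations it encodes --- of the form $\pi(g)\,f=\eta(f/g)\,\pi(gf)$ and $\eta(g)\,\eta(f/g)=\eta(gf)$, suitably substituted --- and then verify that the composite $\eta(g/h)\,\eta(f/hg)$, which has exactly the domain and codomain read off from the chosen factorisations, satisfies both defining equations of $\eta(gf/h)$: precomposing with $\pi(hgf)$ gives $\eta(g/h)\,\pi(hg)\,f=\pi(h)\,g\,f=\pi(h)\,(gf)$, and postcomposing with $\eta(h)$ gives $\eta(hg)\,\eta(f/hg)=\eta(hgf)$. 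Uniqueness (Definition~\ref{strict-ff}) then forces $\eta(g/h)\,\eta(f/hg)=\eta(gf/h)$, and Proposition~\ref{hitrayadiagramma} yields that $\catO$ is \ff.

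I do not expect a genuine obstacle here; the content is bookkeeping. The one point to be careful about is that all the $\pi$'s, $\eta$'s and $\eta(-/-)$'s appearing in the two relations must refer to the \emph{same} canonical \fop factorisations, so that the uniqueness clause of Definition~\ref{strict-ff} really does apply to the composite morphism that has just been manufactured. Once this is pinned down, both relations drop out purely formally from uniqueness, and no argument involving the operadic-category axioms is needed, since the whole argument lives at the level of the chosen factorisations.
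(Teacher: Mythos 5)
Your proposal is correct and is essentially the paper's own argument: the paper simply notes that relations (1) and (2) of Proposition~\ref{hitrayadiagramma} ``follow readily from the uniqueness of $\eta(f/g)$'', and your write-up just spells out that the composite $\eta(g/h)\,\eta(f/hg)$ (resp.\ the identity) satisfies both defining equations of $\eta(gf/h)$ (resp.\ $\eta(\id/f)$), so uniqueness forces equality.
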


\begin{proof}
  We only need to establish the functoriality relations (1) and (2) from
  Proposition~\ref{hitrayadiagramma}. But they follow readily from the
  uniqueness of $\eta(f/g)$.
\end{proof}

Another obvious characterisation of strictly \ff operadic categories is 
given in the following
\begin{lemma}
  A strongly factorisable operadic category is \strictlyfactorisable if and only if
  for any $R\in \catO$ there exists a unique left adjoint $\rrr$
  satisfying Definition~\ref{funfactor}.
\end{lemma}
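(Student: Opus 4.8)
The plan is to prove the two implications separately. In both cases the crux is to recognise the commutative diagram \eqref{etafunctorial1} as the statement that $\pi(g)f$ factors through the reflection unit $\pi(gf)$, so that uniqueness of $\eta(f/g)$ and the universal property of a reflective subcategory become two readings of the same fact; and to exploit that in a strongly factorisable category the object part and the unit of a left adjoint $\rrr\dashv\iii$ are forced rather than genuine data.

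For the direction ``\strictlyfactorisable $\Rightarrow$ unique $\rrr$'', I would first invoke Lemma~\ref{strongtostrict} to know that a left adjoint $\rrr$ exists for each $R$, and then argue that it is uniquely determined. Since $\catO$ is strongly factorisable, order-preserving quasibijections are identities (Lemma~\ref{sf in Sydney}), so $\catO$ is strongly functorially factorisable with respect to any such $\rrr$ (Lemma~\ref{strongff}); applying Lemma~\ref{pimonotone} with $g=\id$ (together with $\pi(\id)=\id$ and $\eta(f/\id)=\eta(f)$, from Lemma~\ref{etafunction}, i.e. the pita factorisation of an order-preserving map being the identity followed by that map) then shows that the unit component at $f\colon T\to R$ forms a \fop pita factorisation $T\xrightarrow{\pi_f}S\xrightarrow{\rrr(f)}R$. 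By strong factorisability this is the \emph{unique} \fop pita factorisation of $f$, so the object assignment $f\mapsto\rrr(f)$ and the whole unit $\pi$ are independent of the chosen left adjoint. Since a left adjoint of $\iii$ is determined by its family of universal arrows — on a morphism $h$ of $\catO/R$ the value $\rrr(h)$ must be the unique map in $\catO/_{\mathsf{ord}}R$ whose composite with the relevant component of $\pi$ equals $\pi\circ h$ — the functor $\rrr$ is then itself unique.

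For the converse, note first that the hypothesis implies $\catO$ is functorially factorisable, so for each $R$ we have a reflection $\rrr\dashv\iii$ whose unit is the chosen pita factorisations (only existence of $\rrr$ is used in this direction). Fixing a chain $T\xrightarrow{f}S\xrightarrow{g}R$ and the notation of \eqref{etafunctorial1}, I would observe that $\eta(g)\circ(\pi(g)f)=gf$ with $\eta(g)$ order-preserving, so $\pi(g)f$ is a morphism $gf\to\iii(\eta(g))$ in $\catO/R$. The universal property of the reflection unit $\pi(gf)$ then provides a \emph{unique} morphism $\varphi\colon\rrr(gf)\to\eta(g)$ in $\catO/_{\mathsf{ord}}R$ with $\iii(\varphi)\circ\pi(gf)=\pi(g)f$. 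Unwinding the definitions, such a $\varphi$ is precisely a map $T'\to S'$ satisfying $\eta(g)\circ\varphi=\eta(gf)$ and $\varphi\circ\pi(gf)=\pi(g)f$, i.e. a map making \eqref{etafunctorial1} commute; hence $\eta(f/g)$ is unique and $\catO$ is \strictlyfactorisable.

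The one genuinely non-formal step is the claim, in the first implication, that the unit of a reflection $\rrr\dashv\iii$ is automatically the unique \fop pita factorisation; this is where strong factorisability is really used, and it is exactly what collapses the apparent freedom in a left adjoint down to the choice of the maps $\eta(f/g)$, so that once those are unique the whole adjoint is. Everything else is a direct unpacking of Definition~\ref{funfactor} and of the universal property of a reflective subcategory, so I expect the write-up to be short.
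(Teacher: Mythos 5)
The paper states this lemma without proof (as an ``obvious characterisation''), so there is nothing to match against; judged on its own, your argument is essentially correct and is the natural unpacking: a left adjoint to $\iii$ is determined by its object part, its unit, and its action on morphisms, and in a strongly factorisable category the first two are pinned down (the unit must be \emph{the} unique \fop pita factorisation), so the only residual freedom is the family $\eta(f/g)$, whose uniqueness is exactly Definition~\ref{strict-ff}. Two remarks. First, your appeal to Lemma~\ref{pimonotone} to show that the unit of an arbitrary left adjoint is \fop is slightly circular: the proof of that lemma already assumes the composite rectangle (i.e.\ the unit triangle of $gf$) is \fop. The cleaner justification is that Definition~\ref{funfactor} is used throughout the paper (e.g.\ in the proof of Lemma~\ref{strongff}) as providing a \emph{\fop} pita factorisation as unit, so strong factorisability identifies it with the canonical one directly.

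Second, and more substantively: in the converse direction you observe that only \emph{existence} of the left adjoint is used, because the filler of \eqref{etafunctorial1} is precisely the adjunct of $\pi(g)f$ under the bijection $\Hom_{\catO/_{\mathsf{ord}}R}(\eta(gf),\eta(g))\cong\Hom_{\catO/R}(gf,\iii\eta(g))$, hence automatically unique. As formal category theory this is airtight, but it proves strictly more than the lemma claims: it would show that every strongly \ff operadic category is already \strictlyfactorisable, which collapses the paper's distinction between these notions and renders the weak blow-up hypothesis of Lemma~\ref{uniqueeta} redundant. The tension is located in Proposition~\ref{hitrayadiagramma}: the data constructed there (a functor $\rrr$ together with a natural transformation $\unit$ satisfying the stated relations) gives the \emph{existence} half of the universal property of the unit but not obviously the \emph{uniqueness} half, which is exactly what your argument consumes. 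So either Definition~\ref{funfactor} is read literally as a genuine adjunction --- in which case your stronger conclusion stands and the lemma is a near-tautology --- or the structure actually available is the weaker one of Proposition~\ref{hitrayadiagramma}, in which case your direction~2 should instead argue that distinct fillers $\eta(f/g)$ yield distinct such structures $\rrr$, so that uniqueness of $\rrr$ forces uniqueness of the filler. You should state explicitly which reading you are using; under the second reading your current direction~2 has a gap at the step ``the universal property of the reflection unit provides a unique morphism''.
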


\begin{lemma}
  The category $\Fin$ is \strictlyfactorisable.
\end{lemma}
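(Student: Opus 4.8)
The plan is to verify that the skeletal category of finite sets $\Fin$, equipped with its standard pita factorisation recalled in Section~2, satisfies the definition of \strictlyfactorisable (Definition~\ref{strict-ff}). This splits into two tasks: first, that $\Fin$ is strongly factorisable, i.e.\ every map admits a \emph{unique} pita factorisation that is a \fop triangle; and second, that for every composable pair $T\stackrel f\to S\stackrel g\to R$ the connecting morphism $\eta(f/g)$ of diagram~\eqref{etafunctorial1} is \emph{uniquely} determined.

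For the first task, recall from Section~2 that in $\Fin$ every $f\colon\underline m\to\underline n$ factors as $f=\eta_f\circ\pi_f$ with $\underline m' = \sum_i f^{-1}(i)$, where $\pi_f$ is the canonical bijection grouping the elements of $\underline m$ by fibre (in increasing order of $i$, and within each block in the order inherited from $\underline m$) and $\eta_f$ is order-preserving. First I would observe that this is a \fop triangle: the fibre of $\eta_f$ at $i$ is $f^{-1}(i)$, and $\pi_f$ restricted to this fibre is the identity (it is an order-preserving bijection $f^{-1}(i)\to f^{-1}(i)$), hence order-preserving; also $\pi_f$ is a quasibijection because in $\Fin$ bijections are precisely the quasibijections (Remark~\ref{qbij=bij}) and $\eta_f$ is order-preserving by construction. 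For uniqueness, suppose $f = \eta'\circ\pi'$ is another \fop pita factorisation through some $\underline{m''}$. Since $\Fin$ has a unique terminal object and the map $!\colon\underline m\to\underline 1$ factors, Lemma~\ref{qb=id} applies once we know uniqueness of pita factorisations into the terminal; but more directly, I would argue as in the proof of Lemma~\ref{strongff}: the comparison map $\alpha\colon\underline{m'}\to\underline{m''}$ (filling the triangle between the two factorisations) is a bijection that is order-preserving on every fibre of $\eta_f$ and satisfies $|\eta'|\circ|\alpha| = |\eta_f|$ with both $|\eta'|,|\eta_f|$ order-preserving, forcing $|\alpha|$ to be order-preserving, hence $\alpha$ is an order-preserving bijection, hence the identity. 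Thus the factorisation is unique.

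For the second task, given $T\stackrel f\to S\stackrel g\to R$ I must show there is a unique $\eta(f/g)$ making \eqref{etafunctorial1} commute. Since $\pi(g)$ is a bijection, it is epic, so $\eta(f/g)$ is uniquely determined by the requirement $\eta(f/g)\circ\pi(gf) = \pi(g)\circ f$ on the underlying level — but we must also check such a map exists, i.e.\ that $\pi(g)\circ f$ really does factor through $\pi(gf)$, equivalently that $\pi(gf)(x)=\pi(gf)(y)$ implies $\pi(g)(f(x))=\pi(g)(f(y))$. Unpacking the definition of $\pi(-)$ in $\Fin$, $\pi(gf)(x)=\pi(gf)(y)$ means $x$ and $y$ lie in the same block, i.e.\ $gf(x)=gf(y)$; then $f(x)$ and $f(y)$ both lie in $g^{-1}(gf(x))$, and since $\pi(g)$ is injective we must check $\pi(g)(f(x))=\pi(g)(f(y))$ only when $f(x)=f(y)$ — which is false in general, so I need to be more careful: $\eta(f/g)$ need not be induced by a quotient, rather the square $\pi(g)\circ f = \eta(f/g)\circ\pi(gf)$ with all four maps present determines $\eta(f/g)$ by $\eta(f/g) = \pi(g)\circ f\circ\pi(gf)^{-1}$, which is well-defined precisely because $\pi(gf)$ is invertible in $\Fin$. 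So existence and uniqueness of $\eta(f/g)$ are both immediate from invertibility of $\pi(gf)$, and the only real content is that this $\eta(f/g)$ makes the \emph{lower} triangle $\eta(gf) = \eta(g)\circ\eta(f/g)$ commute as well — but that follows since $\eta(g)\circ\eta(f/g)\circ\pi(gf) = \eta(g)\circ\pi(g)\circ f = g\circ f = \eta(gf)\circ\pi(gf)$ and $\pi(gf)$ is epic.

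The step I expect to require the most care is pinning down the claim that the diagram \eqref{etafunctorial1} is genuinely asking for a \fop pita factorisation in the sense of Definition~\ref{funfactor}, not merely a commuting square; once one notes that $\pi(gf)$ is invertible in $\Fin$ (quasibijections being bijections), both existence and uniqueness of $\eta(f/g)$ are formal, and strong factorisability was already handled above. I would therefore conclude that $\Fin$ satisfies Definition~\ref{strict-ff} and is \strictlyfactorisable, and by Lemma~\ref{strongtostrict} also functorially factorisable.
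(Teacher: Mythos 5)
Your proof is correct and matches the paper's approach: the paper's own proof is simply ``this is a direct check'' (with a pointer to the alternative route via the weak blow-up axiom and invertibility of quasibijections in Example~\ref{ex:WBU}), and your argument carries out that direct check -- uniqueness of the \fop factorisation by reducing the comparison map to an order-preserving bijection, and existence/uniqueness of $\eta(f/g)$ as $\pi(g)\,f\,\pi(gf)^{-1}$ using that $\pi(gf)$ is invertible in $\Fin$, exactly as in Proposition~\ref{qbwbff}.
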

\begin{proof}
  This is a direct check. See also Example~\ref{ex:WBU}
\end{proof}

More general examples of \strictlyfactorisable operadic categories can be established 
using blow-up axioms.
Recall \cite[p.9--10]{Batanin-Markl:1812.02935}, \cite[p.4--5]{Batanin-Markl:2105.05198}
that $\catO$ is said to satisfy the {\em weak blow-up axiom} if
for each order-preserving morphism $h:T\to R$ and each $\norm{R}$-indexed 
family of morphisms
$f_i:h^{-1}(i)\to F_i, i\in |R|$ there exists a unique factorisation
 \begin{equation*} 
    \xymatrix@C = +1em@R = +1em{
      T      \ar[rr]^f \ar[dr]_h & & S \ar[dl]^g
      \\
      &R&
    }
\end{equation*} 
such that $g$ is an order-preserving morphism and the induced morphisms on
fibres are exactly $f_i:h^{-1}(i)\to F_i, i\in |R|$.

\begin{lemma}\label{uniqueeta}
  Let $\catO$ be a strongly \ff operadic category in which the weak blow-up axiom is
  satisfied. Then $\catO$ is \strictlyfactorisable.
 \end{lemma}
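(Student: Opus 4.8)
The plan is to verify the uniqueness clause in Definition~\ref{strict-ff}: given a chain $T\stackrel{f}{\to}S\stackrel{g}{\to}R$, show that the morphism $\eta(f/g)$ completing diagram~\eqref{etafunctorial1} is unique. First I would set up the data that the weak blow-up axiom consumes. Apply the pita factorisation to $g$ to get $g=\eta(g)\circ\pi(g)$ with $\eta(g)\colon S'\to R$ order-preserving and $\pi(g)\colon S\to S'$ a quasibijection; apply it to $gf$ to get $gf=\eta(gf)\circ\pi(gf)$ with $\eta(gf)\colon T'\to R$ order-preserving. The unknown $\eta(f/g)\colon T'\to S'$ must satisfy $\eta(g)\circ\eta(f/g)=\eta(gf)$ (this is the lower triangle of \eqref{etafunctorial1}, and is exactly relation (2) of Lemma~\ref{etafunction}) and $\eta(f/g)\circ\pi(gf)=\pi(g)\circ f$ (the upper-right region of \eqref{etafunctorial1}, i.e.\ the naturality square \eqref{upperfop}).

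Next I would argue that these two constraints, together with strong factorisability, pin down $\eta(f/g)$ via the weak blow-up axiom applied to the order-preserving morphism $\eta(gf)\colon T'\to R$. Indeed, suppose $\alpha,\beta\colon T'\to S'$ both complete the diagram. Since $\eta(g)\alpha=\eta(gf)=\eta(g)\beta$ and $\eta(g)$ is order-preserving, both $\alpha$ and $\beta$ are order-preserving morphisms sitting over $R$ in the sense required by the weak blow-up axiom (with the common order-preserving map being $\eta(gf)$, and the target being $\eta(g)$ viewed through its fibres). By the uniqueness part of the weak blow-up axiom, to conclude $\alpha=\beta$ it suffices to check that $\alpha$ and $\beta$ induce the same morphisms on the fibres of $\eta(gf)$, for each $i\in|R|$. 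This is where the second constraint and Lemma~\ref{pimonotone} enter: by Lemma~\ref{pimonotone} the square \eqref{upperfop} is \fop\hspace{-1mm}, so $\pi(gf)$ restricts on each fibre of $\eta(gf)$ to an order-preserving quasibijection, hence — since $\catO$ is strongly \ff and thus by Lemma~\ref{strongff}(2) has only identity order-preserving quasibijections — to an \emph{identity}. Therefore on the fibre over $i$ the morphism $\alpha_i$ is forced to equal $(\pi(g)\circ f)$ restricted to that fibre (via the identification furnished by $\pi(gf)_i=\id$), and likewise $\beta_i$; so $\alpha_i=\beta_i$ for all $i$, giving $\alpha=\beta$.

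Finally, existence of $\eta(f/g)$ is already guaranteed because $\catO$ is strongly \ff (hence \ff, so the data of Proposition~\ref{hitrayadiagramma} is available); thus only uniqueness needed proof, and the above handles it. The main obstacle I anticipate is bookkeeping: correctly matching up the fibrewise data of $\alpha$ and $\beta$ with the hypotheses of the weak blow-up axiom — in particular confirming that "inducing the same maps on fibres of $\eta(gf)$" is exactly the uniqueness input the blow-up axiom wants, and that the reduction of $\pi(gf)$ to fibrewise identities (via Lemma~\ref{pimonotone} plus Lemma~\ref{strongff}) genuinely forces the fibrewise components of $\alpha$ and $\beta$ to coincide rather than merely to be isomorphic. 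Axiom~\ref{axQ:BM-fibres-of-local-fibres} will be used to identify the fibres of the relevant composites, as in the proof of Lemma~\ref{strongff}.
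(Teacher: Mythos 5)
Your proposal follows essentially the same route as the paper's proof: existence of a filler comes from strong functorial factorisability, and uniqueness is reduced to the uniqueness clause of the weak blow-up axiom applied to the order-preserving morphism $\eta(gf)$, by showing that the fibre maps over $R$ of any filler are forced. There is, however, one misattributed justification that you should repair. You invoke Lemma~\ref{pimonotone} to conclude that $\pi(gf)$ restricts to an order-preserving quasibijection on each fibre of $\eta(gf)$. Lemma~\ref{pimonotone} says that the square \eqref{upperfop} is \fop, which by Definition~\ref{def:fop} concerns the fibres \emph{over $S'$} (of $\pi(g)\circ f$ and of $\eta(f/g)$), not the fibres of $\eta(gf)$ over $R$; moreover it is a statement about the canonical filler $\eta(f/g)$, so using it while trying to pin down an arbitrary filler risks circularity. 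The fact you actually need --- that $\pi(gf)_i\colon (gf)^{-1}(i)\to\eta(gf)^{-1}(i)$ is an order-preserving quasibijection, hence an identity, for each $i\in\norm{R}$ --- comes directly from the pita factorisation triangle of $gf$ being a \fop triangle over $R$ (part of strong factorisability), together with Axiom~\ref{axQ:BM-fibres-of-local-fibres} to see that its fibres are trivial, and Lemma~\ref{sf in Sydney}. With that substitution the computation is exactly the paper's: for any filler $\alpha$ one has $\alpha_i=\alpha_i\,\pi(gf)_i=\pi(g)_i\,f_i$ for all $i\in\norm{R}$, so all fillers induce the same morphisms on fibres over $R$, and weak blow-up uniqueness applied to $\eta(gf)=\eta(g)\circ\alpha$ (with $\eta(g)$ order-preserving) gives $\alpha=\eta(f/g)$. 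One further wording slip: the fillers $\alpha,\beta$ themselves need not be order-preserving, and the weak blow-up axiom does not require them to be --- only the second factor $\eta(g)$ and the composite $\eta(gf)$ must be order-preserving.
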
  

\begin{proof} 
  Since $\catO$ is strongly \ff, there exists at least one filler $\eta(f/g)$
  in the diagram \eqref{etafunctorial1}. Let $\alpha$ be yet another
  filler. Then from the commutativity of the top square we have
  $$
  \alpha_i = \alpha_i\pi(gf)_i = \pi(g)_i f_i = f_i \ \text{for all} \  i\in |R|.
  $$ 
  Thus $\eta(f/g)$ and $\alpha$ induce equal morphisms on fibres over $R$
  and both $\eta(gf)$ and $\eta(g)$ are order-preserving. Hence, by weak
  blow-up we conclude that $\eta(f/g) = \alpha$.
\end{proof} 

\begin{proposition}\label{qbwbff}
  In a factorisable category $\catO$, if all quasibijections are invertible
  and the weak blow-up axiom is satisfied, then $\catO$ is a 
  \strictlyfactorisable
  operadic category.
\end{proposition}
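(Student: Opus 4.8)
The plan is to reduce to results already proved. By Lemma~\ref{uniqueeta}, a strongly functorially factorisable operadic category in which the weak blow-up axiom holds is automatically \strictlyfactorisable, so it is enough to upgrade the hypothesis ``factorisable'' to ``strongly \ff''. Recalling that ``strongly \ff'' means ``\ff together with the equivalent conditions of Lemma~\ref{strongff}'', this amounts to two tasks: (a) exhibit a left adjoint $\rrr$ (equivalently, by Proposition~\ref{hitrayadiagramma}, functorial choices $\eta(f/g)$), and (b) show that every order-preserving quasibijection in $\catO$ is an identity.

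For task (a) I would fix, once and for all, a pita factorisation $T\xrightarrow{\pi(f)}\,\cdot\,\xrightarrow{\eta(f)}S$ for each morphism $f$ --- these exist because $\catO$ is factorisable. Given a composable pair $T\xrightarrow{f}S\xrightarrow{g}R$, the quasibijection $\pi(gf)$ is invertible by hypothesis, so one can simply set
\[
\eta(f/g)\ :=\ \pi(g)\circ f\circ\pi(gf)^{-1}.
\]
I expect everything then to fall out formally: $\eta(g)\eta(f/g)=g f\,\pi(gf)^{-1}=\eta(gf)$ and $\eta(f/g)\pi(gf)=\pi(g)f$, so the diagram~\eqref{etafunctorial1} commutes; and $\eta(g/h)\eta(f/hg)=\pi(h)(gf)\pi(hgf)^{-1}=\eta(gf/h)$ while $\eta(\id/f)=\pi(f)\pi(f)^{-1}=\id$, so the functoriality relations of Proposition~\ref{hitrayadiagramma} hold. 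Hence $\catO$ is functorially factorisable.

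For task (b), let $\sigma\colon T\to S$ be an order-preserving quasibijection; by hypothesis it is invertible. I would apply the weak blow-up axiom to the order-preserving morphism $\id_S\colon S\to S$ together with the family of identities of the chosen local terminals $(\id_S)^{-1}(i)$, $i\in|S|$ (these fibres are chosen local terminal by Axiom~\ref{axQ:BM-fibres-of-identities}). The trivial factorisation $S\xrightarrow{\id}S\xrightarrow{\id}S$ has the required fibre data, but so does $S\xrightarrow{\sigma^{-1}}T\xrightarrow{\sigma}S$: indeed $\sigma$ is order-preserving, $\sigma\sigma^{-1}=\id_S$, and the induced fibre map at $i$ is a morphism from the chosen local terminal $(\id_S)^{-1}(i)$ to the fibre $\sigma^{-1}(i)$, which is again chosen local terminal since $\sigma$ is a quasibijection --- so that morphism is forced to be an identity. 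The uniqueness clause of the weak blow-up axiom then identifies the two factorisations, giving $\sigma=\id_S$. With (a) and (b) in hand, Lemma~\ref{strongff} shows $\catO$ is strongly factorisable, hence strongly \ff, and Lemma~\ref{uniqueeta} concludes.

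The main obstacle is task (b): it is the only step that genuinely combines the invertibility of quasibijections with the weak blow-up axiom, and the crux is to recognise that the ``unreordering'' map $\sigma^{-1}$, followed by the order-preserving map $\sigma$, is a blow-up of $\id_S$ carrying only trivial fibre data, so that uniqueness forces it to be the trivial factorisation. Task (a), by contrast, should be purely formal once one notices that invertibility of $\pi(gf)$ makes the natural closed formula for $\eta(f/g)$ available --- precisely the device that is unavailable, and the source of the difficulties, in the general setting treated in the rest of the paper.
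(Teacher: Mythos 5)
Your proof is correct and follows the same skeleton as the paper's: the closed formula $\eta(f/g)=\pi(g)\,f\,\pi(gf)^{-1}$, made available by the invertibility of quasibijections, is exactly the device the paper uses to get functorial factorisability, and the final reduction to Lemma~\ref{uniqueeta} is identical. The one point of divergence is your task (b): the paper simply cites \cite[Lemma~2.11]{Batanin-Markl:1812.02935} for the fact that the hypotheses force $\catO$ to be strongly factorisable, whereas you instead verify condition (2) of Lemma~\ref{strongff} directly, by feeding $\id_S$ with its trivial fibre data into the weak blow-up axiom and comparing the factorisation $S\xto{\id}S\xto{\id}S$ with $S\xto{\sigma^{-1}}T\xto{\sigma}S$. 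That argument is sound --- since $\sigma$ is a quasibijection, its fibres are the chosen local terminals of the relevant components and the induced fibre maps are forced to be identities, so the uniqueness clause of the weak blow-up axiom applies and yields $\sigma=\id$ --- and it has the merit of making self-contained a step the paper outsources to an external reference.
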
 

\begin{proof}
  It was proved in \cite[Lemma~2.11]{Batanin-Markl:1812.02935}
  that the conditions imply that the operadic
  category $\catO$ is strongly factorisable. To prove that it is \ff we
  define $\eta(f/g) = \pi(g)f\pi(gf)^{-1}$. By Lemma~\ref{uniqueeta},
  $\catO$ is \strictlyfactorisable.
\end{proof} 

\begin{example}\label{ex:WBU}
  The category $\Fin$ satisfies the weak blow-up axiom 
  (see \cite{Batanin-Markl:2105.05198}) and has invertible 
  quasibijections, so it is \strictlyfactorisable. Similarly, the 
  operadic
  category $\Fin_{\mathrm{surj}}$ is \strictlyfactorisable. A more 
  complicated example is
  the operadic category of graphs: it is shown in \cite[3.15--3.18]{Batanin-Markl:2105.05198}
  that it satisfies the conditions of
  Proposition~\ref{qbwbff}.  For many other examples, see
  \cite{Batanin-Markl:1812.02935,Batanin-Markl:2105.05198}.
\end{example}

The following feature, shared with orthogonal factorisation systems, is a
consequence of functoriality, but note that due to the local
nature of pita factorisation, the map $\tau$ on domains must be required
to be a quasibijection:

\begin{proposition}\label{fundamental}
  Let $\catO$ be a \strictlyfactorisable operadic category
  and consider a commutative square
\begin{equation}\label{basic}
    \xymatrix@C = +3em{
     T \ar[d]_{f} \ar[r]^{\sigma}& T' \ar[d]^{g}
      \\
    S \ar[r]_{\tau} & S'
}    
\end{equation}
in which  
$\tau$ is a quasibijection. Then:
\begin{enumerate} 
\item
There exists a unique morphism $\omega=\omega(\sigma,\tau) : T_1 \to
T'_1$ making the following diagram commutative
 \begin{equation}\label{prism}
    \xymatrix@C = +4em{
     T\ar@/^-3.3ex/[dd]_{f}
      \ar[d]^{\pi(f)} \ar[r]^{\sigma}& T' \ar[d]_{\pi(g)}
      \ar@/^3.3ex/[dd]^{g} 
           \\
    T_1 \ar[d]^{\eta(f)}\ar[r]^{\omega(\sigma,\tau)} & T'_1 \ar[d]_{\eta(g)}  
      \\ 
     S \ar[r]_{\tau} & S'    
}    
\end{equation}
\item If $\sigma$ is a quasibijection which is
\fop with respect to $\tau$, then $\omega$ is also \fop with respect to $\tau$ and
 \begin{enumerate}
 \item $\omega=\pi(\tau\eta(f))$,
  \item $\eta(g) = \eta(\tau\eta(f))$.  
\end{enumerate}
\end{enumerate} 
\end{proposition}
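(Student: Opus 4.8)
The plan is to treat part (1) and part (2) largely separately, handling (1) by a clean application of the universal property of the pita factorisation of $\tau\eta(f)$, and (2) by identifying the morphism $\omega$ produced in (1) with the unit $\pi(\tau\eta(f))$ once $\sigma$ is assumed to be a \fop quasibijection.

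For part (1), first I would observe that since $\tau$ is a quasibijection, the composite $\tau\eta(f) : T_1 \to S'$ has a pita factorisation, say $\tau\eta(f) = \eta(\tau\eta(f))\,\pi(\tau\eta(f))$ with middle object $T''$. I want to show $T'' = T'_1$ and that the two legs of this factorisation are $\pi(g)\sigma$ (precomposed appropriately) and $\eta(g)$. The strategy: on one hand, $g\sigma = \tau f = \tau\eta(f)\pi(f)$, so $\pi(g)\sigma$ is a morphism $T \to T'_1$ sitting over $\pi(f)$; on the other, $\eta(g):T'_1\to S'$ is order-preserving, and $\pi(g):T'\to T'_1$ is a quasibijection. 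Thus $T' \xrightarrow{\pi(g)} T'_1 \xrightarrow{\eta(g)} S'$ is itself a candidate factorisation of $g=\eta(g)\pi(g)$, and the point is to compare it with the factorisation of $\tau\eta(f)$ through $T_1$. I would invoke strict factorisability (Definition~\ref{strict-ff}): applying the unique-filler property of \eqref{etafunctorial1} to the composable pair $T \xrightarrow{f} T_1 \xrightarrow{\tau\eta(f)}$ — or more precisely realising $\omega$ as the unique $\eta(f/\text{(something)})$-type filler — yields a unique $\omega(\sigma,\tau):T_1\to T'_1$ over the required data, which is exactly the prism \eqref{prism}. Concretely, $\omega$ is obtained as the unique morphism with $\omega\,\pi(f) = \pi(g)\,\sigma$ and $\eta(g)\,\omega = \tau\,\eta(f)$; existence and uniqueness follow because $\pi(f)$ is the unit of $\rrr\dashv\iii$ at $f$ and $\eta(g)$ is order-preserving, so this is precisely the universal property of the unit (no extra choices, by strictness). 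The commutativity of the remaining faces of \eqref{prism} is then automatic from the relations of Lemma~\ref{etafunction}.

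For part (2), assume $\sigma$ is a quasibijection that is \fop with respect to $\tau$. I first want (a): that $\omega = \pi(\tau\eta(f))$. By the construction just given, $\omega$ is the unique filler over the data $(\pi(g)\sigma,\ \tau\eta(f))$; but $\pi(\tau\eta(f))$ is, by definition of the pita factorisation of $\tau\eta(f)$, a quasibijection over $T_1$ with order-preserving complement $\eta(\tau\eta(f))$. To conclude $\omega=\pi(\tau\eta(f))$ I would check that $\pi(\tau\eta(f))$ also fills the prism, i.e.\ that $\eta(\tau\eta(f))$ can play the role of $\eta(g)$; this is where \fop-ness of $\sigma$ enters, guaranteeing via Proposition~\ref{3out2} (and Proposition~\ref{verticalfop}) that the relevant squares are \fop so that the factorisation of $\tau\eta(f)$ agrees with the one assembled from $\pi(g)$ and $\eta(g)$. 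Then uniqueness (strictness) forces $\omega=\pi(\tau\eta(f))$ and hence, reading off the order-preserving leg, $\eta(g)=\eta(\tau\eta(f))$, which is (b). That $\omega$ is itself \fop with respect to $\tau$ then follows from Lemma~\ref{pimonotone} applied to the composite $\tau\eta(f)$, since $\omega=\pi(\tau\eta(f))$ is (a reindexing of) the unit $\pi$ whose naturality squares are \fop.

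The main obstacle I anticipate is part (2)(a)–(b): proving that the pita factorisation of the composite $\tau\eta(f)$ \emph{through $T_1$} coincides with the one read off from the prism, i.e.\ that $\pi(g)\sigma$ really is $\pi(\tau\eta(f))$ up to the identification. The subtlety is the familiar local-nature-of-\fop issue flagged in the Warning: \fop-ness over $S'$ does not propagate freely, so one must carefully track over which base each fibrewise condition is being checked, and use that $\tau$ (hence also $\sigma$) is a quasibijection to make $|\tau|$, $|\sigma|$ bijections so that the hypotheses of Proposition~\ref{3out2}(3) apply. Once the bookkeeping is set up, each individual step is an application of a result already in hand; the work is in choosing the right composable pairs and the right base objects.
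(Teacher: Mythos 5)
There is a genuine gap in part (1), at exactly the point where the hypothesis that $\tau$ is a quasibijection must be used (your proposal never actually uses it there). You claim that the existence and uniqueness of $\omega$ with $\omega\,\pi(f)=\pi(g)\,\sigma$ and $\eta(g)\,\omega=\tau\,\eta(f)$ ``follow because $\pi(f)$ is the unit of $\rrr\dashv\iii$ at $f$ and $\eta(g)$ is order-preserving''. But $\pi(f)$ is the unit of the reflection in the slice $\catO/S$, whereas $\eta(g)$ is order-preserving as an object of $\catO/S'$; the universal property of $\pi(f)$ only produces maps into order-preserving morphisms \emph{over $S$}, so it says nothing about maps $T_1\to T'_1$ over $S'$. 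The relevant unit is $\pi(\tau f)$ in $\catO/S'$, and its middle object is a priori neither $T_1$ nor $T'_1$. The missing step is to prove that the pita factorisation of $\tau f$ is the composite $T\xto{\pi(f)}T_1\xto{\pi(\tau\eta(f))}T_2\xto{\eta(\tau\eta(f))}S'$, i.e.\ $\pi(\tau f)=\pi(\tau\eta(f))\,\pi(f)$. This is precisely the identity that the Warning in Section~\ref{funfact} says fails for a general composite; it holds here only because $\tau$ is a quasibijection, which is what lets one paste the two \fop squares (the factorisation of $f$ over $\id_S$ and of $\tau\eta(f)$ over $\tau$) and apply Proposition~\ref{3out2}(3) together with uniqueness of \fop factorisations. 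Once this is in place, strict factorisability applied to the composable pair $T\xto{\sigma}T'\xto{g}S'$ gives the unique $\eta(\sigma/g):T_2\to T'_1$ of Diagram~\eqref{etafunctorial1}, and one sets $\omega=\eta(\sigma/g)\,\pi(\tau\eta(f))$; uniqueness of $\omega$ then follows by factoring any other filler through the unit $\pi(\tau\eta(f))$ and invoking the uniqueness of $\eta(\sigma/g)$. Without the intermediate object $T_2$ your construction has no way to produce a morphism \emph{out of $T_1$} at all.

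Part (2) is also not quite in order as planned: to show that $\pi(\tau\eta(f))$ ``fills the prism'' you would need $\pi(\tau\eta(f))\,\pi(f)=\pi(g)\,\sigma$, which is essentially equivalent to what is being proved, so the argument as sketched is circular. The workable direction is the opposite one: take the $\omega$ already produced in part (1), pass to fibres over $S'$, and use Axiom~\ref{axQ:BM-fibres-of-local-fibres} together with the fact that order-preserving quasibijections are identities to show that each $\omega_i$ is an identity (since $\pi(f)_i$, $\pi(g)_i$ and $\sigma_i$ are). Hence $\omega$ is a quasibijection that is \fop with respect to $\tau$, so $(\omega,\eta(g))$ is a \fop pita factorisation of $\tau\eta(f)$, and strong factorisability forces $\omega=\pi(\tau\eta(f))$ and $\eta(g)=\eta(\tau\eta(f))$. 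Your alternative of first establishing $\pi(g\sigma)=\pi(g)\sigma$ via Proposition~\ref{3out2} could be made to work, but none of its steps are carried out in the proposal.
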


\begin{proof}
  To prove the first statement, consider the following factorisations in the
  square \eqref{basic}:

\begin{equation}\label{sqfactorisation}
    \xymatrix@R=+1em@C = +3em{
     T \ar[ddd]_{f} \ar[dr]^{\pi(f)}\ar[rrr]^{\sigma}&  & &T'\ar[dl]^{\pi(g)} \ar[ddd]^{g}
     \\
      &T_1\ar[ddl]^{\eta(f)}\ar[dr]_{\pi(\tau\eta(f))}\ar@{-->}[r]^{\omega} & T'_1\ar[ddr]^{\eta(g)}&
     \\
     & &T_2\ar[dr]_(0.3){\eta(\tau\eta(f))} \ar@{.>}[u]&
      \\
    S \ar[rrr]_{\tau} & & & S'
}    
\end{equation}
The part of this diagram under the diagonal can be written as a pasting of squares:
 \begin{equation}\label{past}
    \xymatrix@C = +4em{
     T \ar[d]_{f} \ar[r]^{\pi(f)}
     & T_1 \ar[d]_{\eta(f)} \ar[r]^{\pi(\tau\eta(f))}
     & T_2 \ar[d]^{\eta(\tau\eta(f))}
      \\
    S \ar[r]_{\id} & S \ar[r]_{\tau} & S' \,.
}    
\end{equation}
Notice that both squares are \fop, hence their pasting is also \fop by 
Lemma~\ref{3out2}~(3).
Moreover, the composite $\pi(\tau\eta(f))\pi(f)$ is a quasibijection and
$\eta(\tau\eta(f))$ is order-preserving. It follows that this pasting represents the
canonical pita factorisation of $\tau f$. By strict functorial
factorisability we have an arrow $$\eta(\sigma/g): T_2\to T_1$$ fitting in
the commutative diagram \eqref{sqfactorisation} as the vertical dashed
arrow. Now we define
$$
\omega = \eta(\sigma/g)\pi(\tau\eta(f))
$$
shown in the diagram as a horizontal dashed arrow.
We need to show that $\omega$ is unique. 
Assume $\omega':T_1\to T'_1$ is another filler. Then we have a 
commutative square
\begin{equation*}
    \xymatrix@C = +4em{
     T_1 \ar[d]_{\pi(\tau\eta(f))} \ar[r]^{\omega'}& T'_1 \ar[d]^{\eta(g)}
      \\
    T_2 \ar[r]_{\eta(\tau\eta(f))}\ar@{-->}[ru]^{\alpha} & S
}    
\end{equation*}
 with $\eta(g)$ order-preserving. The property of the unit of the
 adjunction now gives a unique lifting $\alpha$ in
 this square and $\omega'$, and in particular a factorisation $\omega'=
 \alpha\pi(\tau(\eta(f))$. Observe also that $\alpha$ makes the following
 diagram commutative:
 \begin{equation}\label{etafunctorial}
    \xymatrix@C = +1.5em@R = +1em{
    T \ar@/^-4.3ex/[ddrrr]_{\tau f} \ar[drr]^(0.6){\hskip0.3em\pi(f)\pi(\tau\eta(f))}     \ar[rrrrrr]^{\sigma} & & & & & & T' \ar@/^4.3ex/[ddlll]^{g} \ar[dll]_(0.6){\hskip-0.7em\pi(g)} 
     \\
     & &T_2      \ar[rr]^{\alpha} \ar[dr]_(0.2){\eta(\tau\eta(f))\hskip-0.5em} & & T'_1 \ar[dl]^(0.2){\eta(g)} & &
      \\ 
     & & &S'& & &
    }
\end{equation}
The uniqueness property of \strictlyfactorisable operadic categories (\ref{strict-ff}) now 
tell us
$\alpha = \eta(\sigma/g)$, and therefore $\omega' = \omega$. 

For the second statement, consider a commutative diagram over $S'$: 
\begin{equation*}
    \xymatrix@C = +4em{
T \ar[d]_{\pi(f)} \ar[r]^{\sigma}& T' \ar[d]^{\pi(g)} \ar@/^3.3ex/[ddr]^g & 
      \\
  T_1 \ar[r]^{\omega} \ar@/^-3.3ex/[drr]_{\tau\eta(f)}
  & T'_1\ar[dr]^{\eta(g)} &
  \\ 
  & & S'   .
}    
\end{equation*}
For each $i\in |S'|$ we have the induced commutative square 
\begin{equation*}
    \xymatrix@C = +3em{
     (\sigma g)^{-1}(i) \ar[d]_{\pi(f)_i} \ar[r]^{\sigma_i}& g^{-1}(i) \ar[d]^{\pi(g)_i}
      \\
   (\tau\eta(f))^{-1}(i) \ar[r]_{\omega_i} & \eta(g)^{-1}(i) .
}    
\end{equation*}
  The pasting \eqref{past} shows that $\pi(f)_i$ is an order-preserving
  quasibijection; it must therefore be the identity. Similarly, $\pi(g)_i$ is an
  identity. Finally, if $\sigma$ is a quasibijection which is 
  \fop with respect to
  $\tau$, then $\sigma_i$ is the identity too, and therefore $\omega_i$ is
  the identity. By Axiom~\ref{axQ:BM-fibres-of-tau-maps},
  the fibres of $\omega$ are
  equal to the fibres of $\omega_{|g|(j)}$, and so the fibres of $\omega$ are
  trivial. Hence $\omega$ is a quasibijection.

  Since $\omega$ provides a factorisation of $\tau\eta(f)$ into a \fop
  quasibijection followed by an order-preserving morphism $\eta(g)$, it must
  be equal to $\pi(\eta(f)\tau)$ and therefore we have $\eta(g) = \eta(\eta(f)\tau)$.
\end{proof} 

\section{Locally order-preserving chains}

For the remainder of the paper, we fix a \strictlyfactorisable operadic
category $\catO$. In this section we study categories of certain locally
order-preserving chains of fixed length $n$. In the next section, we let
$n$ vary and assemble these categories into a toplax simplicial category,
called the pita nerve (it's a very special kind of a oplax simplicial
category, almost strict).

\begin{definition}\label{def:fopdiagram}
  A {\em \fop diagram} in $\catO$ is a commutative diagram
  \begin{equation}\label{Wmorphisms}
  \begin{tikzcd}[column sep={2em,between origins}]
     T_{n} \ar[d, "f_{n}"'] \ar[rr, "\sigma_n"] && S_{n} \ar[d, "g_n"] 
	 \\
     \vdots \ar[d, "f_1"'] & \vdots & \vdots \ar[d, "g_1"]
      \\ 
     T_0 \ar[rr, "\sigma_0"'] && S_0   
  \end{tikzcd}
  \end{equation}
  in which  $\sigma_i$ is \fop with respect to $\sigma_0$ for all $ 0 \le i \le n$. 
\end{definition}

We first define a category $\wnerve(\catO)_n$ (for each $n\geq 0$). The
objects are chains of morphisms
$$
T_n\stackrel{f_n}{\to} \ldots \stackrel{f_1}{\to} T_0.
$$
A morphism between two chains is a \fop diagram \eqref{Wmorphisms} in $\catO$ (in the sense of 
Definition~\ref{def:fopdiagram})
in which all horizontal arrows are quasibijections. It follows from
Proposition~\ref{verticalfop} that each commutative square in this diagram
is a \fop square. Proposition~\ref{3out2}~(3) allows us to compose these
morphisms (horizontally).

\begin{definition}
  A chain of morphisms in $\catO$
  $$
  T_n\xto{f_n} \ldots \xto{f_1} T_0
  $$
  is called a {\em locally order-preserving chain} if all composites
  $$
  T_k \stackrel{f_k}\to \cdots \stackrel{f_1}\to T_0, \quad k=0,\ldots,n
  $$
  ending in $T_0$ are order-preserving morphisms. The word `locally' thus
  means `locally at the last object in the chain'. Note that this does
  not imply that other morphisms in the chain are necessarily
  order-preserving. Even in the category $\Fin$ that is not generally the
  case.
\end{definition} 
 
Let 
$$
\iii_n: \fnerve(\catO)_n \subset \wnerve(\catO)_n
$$ 
be the full subcategory of $\wnerve(\catO)_n$ consisting of locally order-preserving chains.

\begin{proposition}\label{wfn}
  For each $n$, the subcategory $\fnerve(\catO)_n$ is a reflective subcategory with a
  unique reflection
  $$
  \rrr_n:\wnerve(\catO)_n\to \fnerve(\catO)_n.
  $$
\end{proposition}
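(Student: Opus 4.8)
\emph{Plan.} The plan is to produce, for each chain $c\in\wnerve(\catO)_n$, a universal arrow $u_c\colon c\to \iii_n\rrr_n(c)$ into $\fnerve(\catO)_n$; the reflector $\rrr_n$ is then the induced left adjoint and $u$ is automatically natural, so only the object-level construction and its universal property need to be checked. Moreover every ingredient below --- the pita factorisations and the maps $\eta(f/g)$ of Proposition~\ref{hitrayadiagramma} --- is \emph{uniquely} determined in a \strictlyfactorisable category, so no choices are made and $\rrr_n$ will be unique.

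\emph{The reflector on objects.} Given $c=(T_n\xto{f_n}\cdots\xto{f_1}T_0)$, I would write $g_k:=f_1\cdots f_k\colon T_k\to T_0$ (so $g_0=\id$ and $g_k=g_{k-1}f_k$), pita-factor $g_k=\eta(g_k)\,\pi(g_k)$ through an object $Q_k$, and assemble these into a chain using $\phi_k:=\eta(f_k/g_{k-1})\colon Q_k\to Q_{k-1}$, the canonical map supplied by strict factorisability for the pair $T_k\xto{f_k}T_{k-1}\xto{g_{k-1}}T_0$ (diagram \eqref{etafunctorial1}); put $\rrr_n(c):=(Q_n\xto{\phi_n}\cdots\xto{\phi_1}Q_0)$ with $Q_0=T_0$. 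By Lemma~\ref{etafunction}(2), $\eta(g_{k-1})\phi_k=\eta(g_k)$, hence inductively $\phi_1\cdots\phi_k=\eta(g_k)$, which is order-preserving; so $\rrr_n(c)$ is a locally order-preserving chain. Take $u_c$ to have components $\pi(g_k)$ (and $\id_{T_0}$ on the bottom). Its squares commute by the relation $\phi_k\,\pi(g_k)=\pi(g_{k-1})f_k$ (the upper triangle of \eqref{etafunctorial1}); its rows are quasibijections; and it is a \fop diagram because $\pi(g_k)$ is, by the very fact that the pita factorisation is a \fop triangle, fibrewise order-preserving with respect to $\eta(g_k)=\phi_1\cdots\phi_k$ --- which is exactly the requirement that $\pi(g_k)$ be \fop with respect to $\id_{T_0}$. (For $n=0$ this degenerates to $\rrr_0=\id$, as it should.)

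\emph{The universal property.} Let $d=(R_n\xto{\psi_n}\cdots\xto{\psi_1}R_0)$ lie in $\fnerve(\catO)_n$ and let $v\colon c\to\iii_n d$ be a morphism of $\wnerve(\catO)_n$ with components $v_k$. Set $h_k:=\psi_1\cdots\psi_k$; since $d$ is locally order-preserving, $h_k$ is order-preserving, so $\pi(h_k)=\id$ and the pita middle of $h_k$ is $R_k$. Composing the squares of $v$ down to $T_0$ exhibits $v_k$ as a quasibijection that is \fop with respect to the quasibijection $v_0$ in the square with verticals $g_k$ and $h_k$ --- it is precisely this quasibijection hypothesis on the bottom row of a $\wnerve$-morphism that makes Proposition~\ref{fundamental} applicable. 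That proposition then produces $\bar v_k:=\omega(v_k,v_0)\colon Q_k\to R_k$ (using the middles $\mathrm{mid}(g_k)=Q_k$, $\mathrm{mid}(h_k)=R_k$), a quasibijection, \fop with respect to $v_0$ in the square with verticals $\eta(g_k)$ and $h_k$, and --- reading off the prism \eqref{prism} with $\pi(h_k)=\id$ --- satisfying $\bar v_k\,\pi(g_k)=v_k$ and $h_k\,\bar v_k=v_0\,\eta(g_k)$. I set $\bar v_0:=v_0$; the first relation gives $\iii_n(\bar v)\circ u_c=v$.

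\emph{The main obstacle.} The delicate step is that $\bar v=(\bar v_k)$ is a genuine chain morphism, i.e.\ $\psi_k\bar v_k=\bar v_{k-1}\phi_k$: one can cancel neither $h_{k-1}=\psi_1\cdots\psi_{k-1}$ (not monic) nor $\pi(g_k)$ (not epic). I would instead invoke the \emph{uniqueness} clause of Proposition~\ref{fundamental}(1): both $\psi_k\bar v_k$ and $\bar v_{k-1}\phi_k$ are maps $Q_k=\mathrm{mid}(g_k)\to R_{k-1}=\mathrm{mid}(h_{k-1})$, and a short chase using $\psi_k v_k=v_{k-1}f_k$, $\phi_k\pi(g_k)=\pi(g_{k-1})f_k$, $h_k\bar v_k=v_0\eta(g_k)$, $h_{k-1}\bar v_{k-1}=v_0\eta(g_{k-1})$ and $\eta(g_{k-1})\phi_k=\eta(g_k)$ shows each of them makes the prism of Proposition~\ref{fundamental}(1) commute for the square with top edge $v_{k-1}f_k$, verticals $g_k$ and $h_{k-1}$, bottom edge $v_0$, and pita factorisations of $g_k$ on the left and $h_{k-1}$ on the right; hence they coincide. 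Once $\bar v$ is a morphism of $\fnerve(\catO)_n$, uniqueness of the factorisation is the parallel argument: a competitor $\bar v'$ has $\bar v'_0=v_0$ forced, and inductively --- being a chain morphism with $\bar v'_k\,\pi(g_k)=v_k$ --- each $\bar v'_k$ makes the defining prism of $\omega(v_k,v_0)$ commute, so $\bar v'_k=\bar v_k$. Since all data in the construction are canonically determined, $\rrr_n$ is the unique such reflector.
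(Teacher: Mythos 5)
Your proposal is correct and follows essentially the same route as the paper: the same reflector built from the iterated pita factorisations $\eta(f_k/f_1\cdots f_{k-1})$, the same unit with components $\pi(f_1\cdots f_k)$, and the same key ingredient (the uniqueness clause of Proposition~\ref{fundamental}(1)) to force the required squares to commute. The only difference is packaging --- you verify the universal property of the unit at each object rather than defining $\rrr_n$ on morphisms and checking functoriality directly --- which has the mild advantage of making the general-$n$ case fully explicit where the paper sketches an induction from $n=2$.
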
 

\begin{proof} 
 For $n=0$ we have $\wnerve(\catO)_0 = \fnerve(\catO)_0$.

Let $n=1$. We define the reflection $\rrr_1$ on an object $f:T_1\to T_0$
of $\wnerve(\catO)_1 $ by pita factorisation: $\rrr_1(f)= \eta(f):
T_1'\to T_0$. The unit of the adjunction $\unit : \id\to \iii_1\rrr_1$ is
given by the $\pi$-factor of the pita factorisation:
\begin{equation*}
    \xymatrix@C = +3em{
     T_1 \ar[d]_{f} \ar[r]^{\pi(f)}& T_1' \ar[d]^{\eta(f)}
      \\
    T_0 \ar[r]_{\id} & T_0 ,
}    
\end{equation*}
clearly a \fop square.
The counit of the adjunction $\rrr_1\dashv \iii_1$ is the identity 
(by~\ref{strongff}). 

 The value of the reflection  $\rrr_1$ on a morphism 
  \begin{equation}\label{wmorph}
    \xymatrix@C = +2em{
     T_{1} \ar[d]_{f} \ar[r]^{\sigma}& S_{1} \ar[d]^{g} 
 \\
     T_0 \ar[r]_{\tau} & S_0   
}    
\end{equation}   
  is defined by the commutative square from Proposition~\ref{fundamental}:
 \begin{equation*}
    \xymatrix@C = +7em{
     T'_{1} \ar[d]_{\eta(f)} \ar[r]^{\omega(\sigma,\tau)= \pi(\tau\eta(f))}& S'_{1} \ar[d]^{\eta(g)} 
 \\
     T_0 \ar[r]_{\tau} & S_0   
}    
\end{equation*}  
The functoriality of $\rrr$ and naturality of the unit is again the content
of Proposition~\ref{fundamental}.

We consider now the case $n=2$. Let
\begin{equation}\label{TfSgR}
  T\xto{f}S\xto{g} R
\end{equation}
be an object of $\wnerve(\catO)_2$.  We then have a commutative diagram
 \begin{equation}
    \xymatrix@C = +5em{
     T\ar@/^-4.3ex/[dd]_{gf}
      \ar[d]^{f}
       \ar[r]^{\pi(gf)}& 
      T' \ar[d]_{\eta(f/g)}
      \ar@/^4.3ex/[dd]^{\eta(gf)} 
           \\
    S \ar[d]^{g}\ar[r]^{\pi(g)} & S' \ar[d]_{\eta(g)}
      \\ 
     R \ar[r]_{\id} & R    
}    
\end{equation}
In this diagram the right-hand column $T'\xto{\eta(f/g)} S' \xto{\eta(g)}
R$ is a locally order-preserving chain, which we take as value of
$\mathsf{r_2}$ on \eqref{TfSgR}. Furthermore, the two squares of the
diagram together constitute a morphism in $\wnerve(\catO)_2$, which we
take as the unit $\unit_2$ of the adjunction $\rrr_2\dashv \iii_2$. The
counit of this adjunction is the identity, as usual (by~\ref{strongff}).
 
To prove functoriality of $\mathsf{r}_2$, let  
 \begin{equation}\label{verticalcomposition}
    \xymatrix@C = +3em{
     T
      \ar[d]_{f} \ar[r]^{\omega}& T' \ar[d]^{a}
           \\
    S \ar[d]_{g}\ar[r]^{\sigma} & S' \ar[d]^{b}  
      \\ 
     R \ar[r]_{\rho} & R'    
}    
\end{equation}
be a commutative diagram in $\catO$, in which all horizontal maps are quasibijections
and both $\sigma$ and $\omega$ are \fop with respect to $\rho$. Then we define the 
value of $\mathsf{r}_2$ on \eqref{verticalcomposition} to be the diagram
 \begin{equation*}
    \xymatrix@C = +4em{
     T_1
      \ar[d]_{\eta(f/g)} \ar[r]^{\pi(\rho\eta(gf))}& T'_1 \ar[d]^{\eta(a/b)}
           \\
    S_1 \ar[d]_{\eta(g)}\ar[r]^{\pi(\rho\eta(g))} & S'_1 \ar[d]^{\eta(b)}  
      \\ 
     R \ar[r]_{\rho} & R'    \,,
}    
\end{equation*}
but we need to check various things before we can make sense of that.
Note first of all that the vertical composites match the value of $\mathsf{r}_2$ on
objects already defined. Second, we claim that both the horizontal arrows in the diagram
are quasibijections that are \fop with respect to $\rho$.
To see this, apply Proposition~\ref{fundamental} to the bottom square of
Diagram~\eqref{verticalcomposition} and to the whole composite square, to get the 
solid part of the
commutative diagram
  \begin{equation}\label{proizvodnyj}
    \xymatrix@C = +5em{
     T_1\ar@/^-4.3ex/[dd]_{\eta(gf)}
      \ar@{-->}[d]^{\eta(f/g)}
       \ar[r]^{\pi(\rho\eta(gf))}& 
      T'_1 \ar@{-->}[d]_{\eta(a/b)}
      \ar@/^4.3ex/[dd]^{\eta(ba)} 
           \\
    S_1 \ar[d]^{\eta(g)}\ar[r]_{\pi(\rho\eta(g))} & S'_1 \ar[d]_{\eta(b)}  
      \\ 
     R \ar[r]_{\rho} & R'    
}    
\end{equation}
Then we have $\eta(g)\eta(f/g) = \eta(gf)$ and $\eta(b) = \eta(a/b)\eta(b)$ by 
Lemma~\ref{etafunction}, giving the commutativity of the triangles in the diagram.
It remains to establish that the top square commutes, which is the equality
$\eta(a/b)\pi(\rho\eta(gf)) =   \eta(f/g)\pi(\rho\eta(g))$.
This square appears as the middle horizontal square in the big diagram
\[
\begin{tikzcd}[column sep={48pt,between origins}, row sep={36pt,between origins}]
  & T \ar[ddd, pos=0.3, "\pi(gf)"] \ar[ld, "f" {anchor=south, rotate=35}] \ar[rrd, "\omega" {anchor=south, rotate=-19}] & &
  \\
  S \ar[ddd, "\pi(g)"'] \ar[rrd, pos=0.8, "\sigma" {anchor=north, rotate=-19}] & & & 
  T' \ar[ddd, "\pi(ba)"]  \ar[ld, "a" {anchor=north, rotate=35}]
  \\
  & & S' \ar[ddd, pos=0.17,"\pi(b)"'] &
  \\[-12pt]
  & T_1 \ar[ddd, pos=0.8, "\eta(gf)"'] \ar[ld, pos=0.45, "\eta(f/g)" {anchor=north, rotate=35}] \ar[rrd, 
  pos=0.75,
  "\pi(\rho\eta(gf))"  {anchor=south, rotate=-19}] & &
  \\
  S_1 \ar[ddd, "\eta(g)"'] \ar[rrd, pos=0.75, "\pi(\rho\eta(g))" {anchor=south, rotate=-19}] & & & T_1' \ar[ddd, 
  "\eta(ba)"] \ar[ld, pos=0.45, "\eta(a/b)" {anchor=north, rotate=35}]
  \\
  & & S_1'\ar[ddd, pos=0.7, "\eta(b)"'] &
  \\[-12pt]
  & R \ar[ld, equal] \ar[rrd, pos=0.8, "\rho" {anchor=south, rotate=-19}] & &
  \\
  R \ar[rrd, "\rho" {anchor=north, rotate=-19}] & & & R' \ar[ld, equal]
  \\
  & & R'
  \\
\end{tikzcd}
\]
Here the four vertical composites are pita factorisations. The
left-hand and right-hand composite faces commute because they are instances of
Diagram~\eqref{etafunctorial1}. The front and back composite faces commute
because they are instances of Proposition~\ref{fundamental} (2) (b). The top
face commutes by assumption, and the bottom face trivially commutes. The middle
horizontal square is now forced to commute by Proposition~\ref{fundamental} (1)
(uniqueness) applied to
\[
\begin{tikzcd}
T \ar[d, "gf"']\ar[r, "a\omega=\sigma f"] & S' \ar[d, "b"]  \\
R \ar[r, "\rho"'] & R' .
\end{tikzcd}
\]

For an arbitrary $n\ge 0$, we define the value of $\mathsf{r_n}$ on a chain
$T_n\stackrel{f_n}{\to} \cdots \stackrel{f_1}{\to} T_0$ to be equal to
$$
T_n'\xto{\eta(f_n/f_1\cdots f_{n-1})} T'_{n-1}\xto{\eta(f_{n-1}/f_1\cdots
f_{n-2})} \quad \cdots \quad
\xto{\eta(f_1)} T_0.
$$
The unit of the adjunction is given by the \fop diagram
\begin{equation}
    \xymatrix@C = +2em{
     T_{n} \ar[d]_{f_{n}} \ar[rr]^{\pi(f_1\cdots f_{n-1}f_n)}& &T'_{n} 
     \ar[d]^{\eta(f_n/f_1\cdots f_{n-1})} 
           \\
     \vdots \ar[d]_{f_3} & \vdots & \vdots \ar[d]^{\eta(f_3/f_1f_2)}\\  
       T_2\ar[d]_{f_2} \ar[rr]^{\pi(f_1f_2)}& & T_2' \ar[d]^{\eta(f_2/f_1)}\\
 T_1  \ar[d]_{f_1}\ar[rr]^{\pi(f_1)}  & &T_1'\ar[d]^{\eta(f_1)}
      \\ 
     T_0 \ar[rr]_{\id} & &T_0 .   
}    
\end{equation}   
For functoriality of $\rrr_n$ we observe that 
a commutative diagram in $\catO$
\begin{equation}
    \xymatrix@C = +1em{
     T_{n} \ar[d]_{f_{n}} \ar[rr]^{\sigma_n}&& S_{n} \ar[d]^{g_n} 
           \\
   \ar[d]_{f_1} \vdots& \vdots& \vdots\ar[d]^{g_1}
      \\ 
     T_0 \ar[rr]_{\sigma_0} && S_0   
}    
\end{equation}   
 in which all horizontal maps are quasibijections and $\sigma_i$ are \fop with respect to $\sigma_0$ 
 induces a commutative diagram
 \begin{equation}\label{WTmorphisms}
  \begin{tikzcd}[column sep={3.5em,between origins}]
     T'_{n} \ar[d, "\eta(f_{n}/f_{1}\cdots f_{n-1})"']
     \ar[rr, "\pi(\sigma_0\eta(f_1\cdots f_n))"]
     & &
     S'_{n} \ar[d, "\eta(g_{n}/g_{1}\cdots g_{n-1})"]
           \\
   \vdots \ar[d, "\eta(f_1)"'] & \vdots
   & 
   \vdots\ar[d, "\eta(g_1)"]
      \\ 
     T_0 \ar[rr, "\sigma_0"'] & & S_0   
	 \end{tikzcd}
\end{equation}   
by an induction similar to the case $n=2$.
\end{proof}

\begin{definition}
  We define $(\tttt_n,\id,\unit_n)$ to be the idempotent monad on $\wnerve(\catO)_n$
  induced by the adjunction $\rrr_n\dashv \iii_n$. 
\end{definition} 

\begin{lemma}\label{df}
  The functor $\fnerve(\catO)_n\to \fnerve(\catO)_0$ that sends a
  locally order-preserving chain $T_n\xto{f_n} \cdots \xto{f_1} T_0$ to $T_0$ and a
  morphism \eqref{Wmorphisms} to $\sigma_0$ is a discrete opfibration.
\end{lemma}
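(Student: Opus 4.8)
To show that the functor $\mathsf{d}\colon \fnerve(\catO)_n \to \fnerve(\catO)_0$ sending $(T_n\xto{f_n}\cdots\xto{f_1}T_0)\mapsto T_0$ and a morphism \eqref{Wmorphisms} to $\sigma_0$ is a discrete opfibration, I must do two things for each object $c=(T_n\xto{f_n}\cdots\xto{f_1}T_0)$ and each morphism $\sigma_0\colon T_0\to S_0$ out of $\mathsf{d}(c)=T_0$ in $\fnerve(\catO)_0$: (i) produce a morphism $\Sigma\colon c\to c'$ in $\fnerve(\catO)_n$ with $\mathsf{d}(\Sigma)=\sigma_0$, and (ii) show this lift is \emph{unique}.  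Recall that in $\fnerve(\catO)_0=\wnerve(\catO)_0$ a morphism $\sigma_0\colon T_0\to S_0$ is simply a quasibijection, since every object of $\catO$ is a locally order-preserving chain of length $0$ and the \fop condition on $\sigma_0$ with respect to itself is automatic (the Observation after Definition~\ref{def:fop}).

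\textbf{Construction of the lift.}  Given the data above, first I would form the pullback-type chain over $S_0$ iteratively.  Set $S_0$ as the bottom object.  Build the chain $S_n\xto{g_n}\cdots\xto{g_1}S_0$ together with quasibijections $\sigma_k\colon T_k\to S_k$ by applying Proposition~\ref{fundamental} repeatedly.  Concretely, for the bottom square take $f\coloneqq f_1\cdots$ suitably; more cleanly, proceed by induction on the length: the commutative square
\[
\xymatrix@C=+3em{
 T_1 \ar[d]_{f_1} \ar[r]^{\sigma_1}& S_1 \ar[d]^{g_1}\\
 T_0 \ar[r]_{\sigma_0} & S_0
}
\]
is produced by Proposition~\ref{fundamental}(1) applied with $g\coloneqq g_1$ to be determined: but since $T_0\xto{\id}T_0$, the left factor $\pi(f_1)=\id$ already (as $f_1$ is order-preserving), so Proposition~\ref{fundamental} degenerates and just asks for $g_1\coloneqq\eta(\sigma_0 f_1)$ and $\sigma_1\coloneqq\pi(\sigma_0 f_1)$, which is a quasibijection, \fop with respect to $\sigma_0$ by Lemma~\ref{pimonotone} (or directly by the pita factorisation being a \fop square).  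The composite $g_1$ is order-preserving.  Inductively, having built $S_{k-1}\to\cdots\to S_0$ with all $\sigma_j$ quasibijections \fop over $\sigma_0$ and all composites to $S_0$ order-preserving, apply Proposition~\ref{fundamental} to the square
\[
\xymatrix@C=+3em{
 T_k \ar[d]_{f_1\cdots f_k} \ar[r]^{\sigma_k}& S_k \ar[d]^{g_1\cdots g_k}\\
 T_0 \ar[r]_{\sigma_0} & S_0
}
\]
(with $\sigma_k$ the not-yet-constructed top map), obtaining $\sigma_k\coloneqq\pi\big(\sigma_0\eta(f_1\cdots f_k)\big)$ and the component $g_k$ as the corresponding $\eta(\sigma_{k-1}/\ldots)$-type arrow; part (2) of that proposition guarantees $\sigma_k$ is a quasibijection \fop over $\sigma_0$, and the new composite $g_1\cdots g_k=\eta(\sigma_0\eta(f_1\cdots f_k))$ is order-preserving.  (This is precisely the shape of the induction already carried out in the proof of Proposition~\ref{wfn}, passing from \eqref{verticalcomposition} to \eqref{WTmorphisms}, so the bookkeeping is routine.)  The resulting diagram is a \fop diagram with quasibijection rows whose target chain is locally order-preserving, hence a morphism $\Sigma\colon c\to c'$ in $\fnerve(\catO)_n$, and $\mathsf{d}(\Sigma)=\sigma_0$ by construction.

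\textbf{Uniqueness.}  Suppose $\Sigma'\colon c\to c''$ is another morphism of $\fnerve(\catO)_n$ lifting $\sigma_0$, with components $\sigma_k'\colon T_k\to \bar S_k$ and chain $\bar S_n\to\cdots\to\bar S_0=S_0$.  I argue by induction on $k$ that $\bar S_k=S_k$, $g_k$ agrees, and $\sigma_k'=\sigma_k$.  For the bottom, $\bar S_0=S_0$ and $\sigma_0'=\sigma_0$ by hypothesis.  At stage $k$, the square of $\Sigma'$ at level $k$ exhibits a factorisation of the order-preserving map $\sigma_0\,(f_1\cdots f_k)$ (equivalently $(\bar g_1\cdots\bar g_k)\sigma_k'$) as the quasibijection $\sigma_k'$ followed by the order-preserving map $\bar g_1\cdots\bar g_k$; moreover $\sigma_k'$ is \fop with respect to $\sigma_0$.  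By the uniqueness clause in Proposition~\ref{fundamental}(1) — applied to the square with bottom $\sigma_0$ and right vertical $\bar g_1\cdots\bar g_k$ — the arrow $\sigma_k'$ is forced to equal $\pi\big(\sigma_0\eta(f_1\cdots f_k)\big)=\sigma_k$, and correspondingly $\bar S_k=S_k$ and $\bar g_1\cdots\bar g_k=g_1\cdots g_k$; since this holds for all $k$, the individual $\bar g_k=g_k$ as well.  Hence $\Sigma'=\Sigma$, completing the verification that $\mathsf{d}$ is a discrete opfibration.  The main obstacle is purely organizational: ensuring that the inductive construction and uniqueness argument invoke exactly the right instance of Proposition~\ref{fundamental} at each level and that the \fop-over-$\sigma_0$ hypotheses propagate — but since the analogous induction has already been executed in the proof of Proposition~\ref{wfn}, there is nothing genuinely new to overcome.
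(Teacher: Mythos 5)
Your construction is essentially the paper's own proof: the lift is exactly Diagram~\eqref{WTmorphisms} (equivalently, the functoriality of $\rrr_n$ from Proposition~\ref{wfn} together with Proposition~\ref{fundamental}), so existence and most of uniqueness are fine and match the paper's one-line argument.

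One step in your uniqueness argument is not justified as written: from $\bar g_1\cdots\bar g_k=g_1\cdots g_k$ for all $k$ you conclude ``the individual $\bar g_k=g_k$ as well'', but equality of composites does not imply equality of the factors (that would need the $g_j$ to be monomorphisms, which is not available). The correct reason is that, once you know $\sigma'_k=\pi(\sigma_0 f_1\cdots f_k)$ and $\bar g_1\cdots\bar g_k=\eta(\sigma_0 f_1\cdots f_k)$ for all $k$, the level-$k$ square of $\Sigma'$ exhibits $\bar g_k$ as a filler of Diagram~\eqref{etafunctorial1} for the composable pair $f_k$, $\sigma_0 f_1\cdots f_{k-1}$, and such a filler is \emph{unique} precisely by the definition of a \strictlyfactorisable operadic category (Definition~\ref{strict-ff}); hence $\bar g_k=\eta(f_k/\sigma_0 f_1\cdots f_{k-1})=g_k$. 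With that one-line repair the proof is complete.
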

 
\begin{proof}
  Indeed, if a chain $T_n\stackrel{f_n}{\to} \ldots \stackrel{f_1}{\to}
  T_0$ is locally order-preserving, then any $\sigma_0:T_0\to S_0$ admits a unique
  lifting to a morphism in $\fnerve(\catO)_n$ using  Diagram~\eqref{WTmorphisms}.
\end{proof}

\section{Toplax simplicial objects}

In the next section we shall assemble the categories $\fnerve(\catO)_n$
into some kind of simplicial object called the {\em pita nerve}. It is
not quite a simplicial object but rather what we call a {\em toplax
simplicial object}. The elementary definition of this notion may appear
somewhat specialised and ad hoc, but in this section we explain how it is
in fact a very natural notion, springing naturally from the fundamental
notion of decalage.

In elementary terms, a toplax simplicial object in a $2$-category 
$\KK$ should be a special case of normal oplax simplicial object (meaning an 
oplax functor $\simplexcategory\op\to\KK$) whose non-strictness is
concentrated in the top face operators in the following way. There are
two conditions:
\begin{enumerate}
  \item The restriction to a $\Dt$-presheaf is strict.

  \item For every triangle given by two composable generators of
  $\simplexcategory$ that are {\em not} two top face operators, the 
  corresponding $2$-cell in $\KK$ is strict.
\end{enumerate}

The same structure can be described in term of squares instead of 
triangles (cf.~Remark~\ref{Jardine-rmk} further below). This viewpoint
takes as starting point the simplicial identities, but replaces one 
family of identities with non-invertible $2$-cells subject to some 
equations:

\begin{definition}\label{def:toplax}
  A {\em toplax simplicial object} in a $2$-category $\KK$ consists of
  
  \begin{enumerate}
    \item   a sequence of object $X_n$, $n\geq 0$;

    \item face and degeneracy operators $d_i$, $s_i$ like in a 
    simplicial object;
  
    \item $2$-cells (for $n\geq 0$)
\[
\begin{tikzcd}
X_{n+2} \ar[d, "d_{n+1}"'] \ar[r, "d_{n+2}"] & X_{n+1} \ar[d, "d_{n+1}"]  \\
X_{n+1} \ar[r, "d_{n+1}"'] \ar[ru, Rightarrow, shorten <=14pt, shorten 
>=14pt, "\beta_n"]& X_n \,.
\end{tikzcd}
\]

  \end{enumerate}
  
  These data are subject to the following axioms.
  
  \begin{enumerate}
    \item All the strict simplicial identities except the ones replaced 
    by $\beta_n$ (that is, all simplicial identities except those involving 
    two consecutive top face operators);
  
    \item The following four series of equations for the 
    $2$-cells $\beta_n$:
  \end{enumerate}

\begin{equation}\label{beta-eq}
\begin{tikzcd}[column sep={44pt,between origins},row sep={35pt,between origins}]
X_{n+3} \ar[dd, "d_{n+1}"'] \ar[rd, pos=0.7, "d_{n+2}"'] 
\ar[rr, "d_{n+3}"] && X_{n+2}  \ar[rd, "d_{n+2}"] &  
\\
& X_{n+2} \ar[dd, "d_{n+1}"] \ar[rr, "d_{n+2}"'] 
\ar[ru, Rightarrow, shorten <=8pt, shorten >=8pt, "\beta_{n+1}"]
&& X_{n+1} \ar[dd, "d_{n+1}"] 
 \\
 X_{n+2} \ar[rd, "d_{n+1}"'] \ar[ru, phantom, "\commutes" description] & & &
 \\
 & X_{n+1} \ar[rr, "d_{n+1}"'] 
 \ar[rruu, Rightarrow, shorten <=36pt, shorten >=36pt, "\beta_n"']&& X_n
\end{tikzcd}
\ \ \
= 
\ \ \
\begin{tikzcd}[column sep={44pt,between origins},row sep={35pt,between origins}]
X_{n+3} \ar[dd, "d_{n+1}"']  
\ar[rr, "d_{n+3}"] && X_{n+2}  \ar[dd, "d_{n+1}"']\ar[rd, "d_{n+2}"] & 
\\
 & && X_{n+1} \ar[dd, "d_{n+1}"]
 \\
 X_{n+2} \ar[rd, "d_{n+1}"'] \ar[rr, "d_{n+2}"] 
 \ar[rruu, phantom, "\commutes" description] & & X_{n+1} \ar[rd, 
 "d_{n+1}"'] 
 \ar[ru, Rightarrow, shorten <=8pt, shorten >=8pt, "\beta_n"] &
 \\
 & X_{n+1} \ar[rr, "d_{n+1}"'] 
 \ar[ru, Rightarrow, shorten <=8pt, shorten >=8pt, "\beta_n"]&& X_n
\end{tikzcd}
\end{equation}
\begin{equation}\label{beta-eq-bis}
\begin{tikzcd}[column sep={44pt,between origins},row sep={35pt,between origins}]
X_{n+2}  \ar[rd, pos=0.7, "d_{n+1}"'] 
\ar[rr, "d_{n+2}"] && X_{n+1}  \ar[rd, "d_{n+1}"] &  
\\
& X_{n+1} \ar[rr, "d_{n+1}"'] 
\ar[ru, Rightarrow, shorten <=8pt, shorten >=8pt, "\beta_{n}"]
&& X_{n} 
 \\
 X_{n+1} \ar[uu, "s_{n+1}"]  \ar[rd, "\id"'] \ar[ru, phantom, "\commutes" description] & & &
 \\
 & X_{n+1} \ar[rr, "d_{n+1}"'] \ar[uu, "\id"']
 \ar[rruu, phantom, "\commutes" description]&& X_{n}  \ar[uu, "\id"']
\end{tikzcd}
\ \ \
= 
\ \ \
\begin{tikzcd}[column sep={44pt,between origins},row sep={35pt,between origins}]
X_{n+2}  
\ar[rr, "d_{n+2}"] && X_{n+1} \ar[rd, "d_{n+1}"] & 
\\
 & && X_{n} 
 \\
 X_{n+1} \ar[uu, "s_{n+1}"] \ar[rd, "\id"'] \ar[rr, "\id"] 
 \ar[rruu, phantom, "\commutes" description] & & X_{n+1} \ar[rd, 
 "d_{n+1}"'] \ar[uu, "\id"]
 \ar[ru, phantom, "\commutes" description] &
 \\
 & X_{n+1} \ar[rr, "d_{n+1}"'] 
 \ar[ru, phantom, "\commutes" description]&& X_n
 \ar[uu, "\id"']
\end{tikzcd}
\end{equation}

\begin{equation}\label{new-eq}
\begin{tikzcd}[column sep={44pt,between origins},row sep={35pt,between origins}]
X_{n+3} \ar[dd, "d_{i}"'] \ar[rd, pos=0.7, "d_{n+2}"'] 
\ar[rr, "d_{n+3}"] && X_{n+2}  \ar[rd, "d_{n+2}"] &  
\\
& X_{n+2} \ar[dd, "d_{i}"] \ar[rr, "d_{n+2}"'] 
\ar[ru, Rightarrow, shorten <=8pt, shorten >=8pt, "\beta_{n+1}"]
&& X_{n+1} \ar[dd, "d_{i}"] 
 \\
 X_{n+2} \ar[rd, "d_{n+1}"'] \ar[ru, phantom, "\commutes" description] & & &
 \\
 & X_{n+1} \ar[rr, "d_{n+1}"'] 
 \ar[rruu, phantom, "\commutes" description]&& X_n
\end{tikzcd}
\ \ \
= 
\ \ \
\begin{tikzcd}[column sep={44pt,between origins},row sep={35pt,between origins}]
X_{n+3} \ar[dd, "d_{i}"']  
\ar[rr, "d_{n+3}"] && X_{n+2}  \ar[dd, "d_{i}"']\ar[rd, "d_{n+2}"] & 
\\
 & && X_{n+1} \ar[dd, "d_{i}"]
 \\
 X_{n+2} \ar[rd, "d_{n+1}"'] \ar[rr, "d_{n+2}"] 
 \ar[rruu, phantom, "\commutes" description] & & X_{n+1} \ar[rd, 
 "d_{n+1}"'] 
 \ar[ru, phantom, "\commutes" description] &
 \\
 & X_{n+1} \ar[rr, "d_{n+1}"'] 
 \ar[ru, Rightarrow, shorten <=8pt, shorten >=8pt, "\beta_n"]&& X_n
\end{tikzcd}
\end{equation}
\begin{equation}\label{new-eq-bis}
\begin{tikzcd}[column sep={44pt,between origins},row sep={35pt,between origins}]
X_{n+3}  \ar[rd, pos=0.7, "d_{n+2}"'] 
\ar[rr, "d_{n+3}"] && X_{n+2}  \ar[rd, "d_{n+2}"] &  
\\
& X_{n+2} \ar[rr, "d_{n+2}"'] 
\ar[ru, Rightarrow, shorten <=8pt, shorten >=8pt, "\beta_{n+1}"]
&& X_{n+1} 
 \\
 X_{n+2} \ar[uu, "s_{i}"]  \ar[rd, "d_{n+1}"'] \ar[ru, phantom, "\commutes" description] & & &
 \\
 & X_{n+1} \ar[rr, "d_{n+1}"'] \ar[uu, "s_{i}"']
 \ar[rruu, phantom, "\commutes" description]&& X_{n}  \ar[uu, "s_{i}"']
\end{tikzcd}
\ \ \
= 
\ \ \
\begin{tikzcd}[column sep={44pt,between origins},row sep={35pt,between origins}]
X_{n+3}  
\ar[rr, "d_{n+3}"] && X_{n+2} \ar[rd, "d_{n+2}"] & 
\\
 & && X_{n+1} 
 \\
 X_{n+2} \ar[uu, "s_{i}"] \ar[rd, "d_{n+1}"'] \ar[rr, "d_{n+2}"] 
 \ar[rruu, phantom, "\commutes" description] & & X_{n+1} \ar[rd, 
 "d_{n+1}"'] \ar[uu, "s_{i}"]
 \ar[ru, phantom, "\commutes" description] &
 \\
 & X_{n+1} \ar[rr, "d_{n+1}"'] 
 \ar[ru, Rightarrow, shorten <=8pt, shorten >=8pt, "\beta_n"]&& X_n
 \ar[uu, "s_{i}"']
\end{tikzcd}
\end{equation}
The first two equations, \eqref{beta-eq} and \eqref{beta-eq-bis}, are the most
substantial ones.
Equations~\eqref{new-eq} are a variant of \eqref{beta-eq}, where the index $n+1$
of the vertical maps have been replaced by $i\leq n$. With this lesser index, more
squares in the cube become commutative, but there are still two beta cells left
to form an equation. Equations~\eqref{new-eq-bis} can also be viewed as variants
of \eqref{beta-eq-bis}, but with the degeneracy operators further away from the top,
so as not to behave like a section.
\end{definition}

It is Definition~\ref{def:toplax} that naturally arises from decalage, as we 
proceed to explain.
Recall that the decalage comonad is induced by the adjunction
\[
\begin{tikzcd}
\Dt \ar[d, shift left=2, "\uuu"]  \ar[d, phantom, "\scriptstyle\isleftadjointto" 
description]\\
\simplexcategory \ar[u, shift left=2, "\iii"]
\end{tikzcd}
\]
as
$$
\DDD := \iii\upperstar\uuu\upperstar  \,.
$$
Here $\Dt$ is the category of ordinals with a top element and 
top-preserving order-preserving maps, and $\iii\isleftadjointto\uuu$ is 
the free-forgetful adjunction.

It was observed in \cite{Garner-Kock-Weber:1812.01750} that 
$\DDD$-coalgebra structure, which in turn is equivalent to having extra 
top degeneracy maps so as to form a $\Dt$-presheaf, is equivalent to
having chosen local terminal objects. The same adjunction induces a
monad structure on $\Dt$-presheaves (that is, on the category of 
$\DDD$-coalgebras), which is denoted $\widetilde\DDD$ \cite[Definition 
9]{Garner-Kock-Weber:1812.01750}. The algebras for 
this monad are simplicial objects again! In detail, while coalgebras for
$\DDD$ amount to adding extra top degeneracy operators, which constitute
the coalgebra structure maps $s: X \to \DDD(X)$, in turn for $\DDD$-coalgebras
($\Dt$-presheaves) we already have those extra top degeneracy operators,
and $\widetilde\DDD$-algebra structure amounts to adding extra top face 
operators further on top -- these 
constitute the algebra structure map $d:\widetilde\DDD (X) \to X$ --  so as to obtain a 
simplicial object again (cf.~\cite[Lemma~11]{Garner-Kock-Weber:1812.01750}).

The only little twist we introduce in these fundamental relationships is
that we now consider presheaves with values in a $2$-category $\KK$, and
are concerned with {\em normal oplax} $\widetilde\DDD$-algebras instead
of strict algebras. We briefly recall these notions.

Let $\TTT : \mathcal{K} \to \mathcal{K}$ be a strict monad on a 
$2$-category $\mathcal{K}$. Recall that a normal oplax $\TTT$-algebra
is an triple $(A,a,\beta)$ consisting of an object $A\in \mathcal{K}$,
a $1$-morphism
$$
\TTT A \stackrel{a}\to A
$$
and a $2$-cell 
\begin{equation}\label{betaT}
\begin{tikzcd}
\TTT \TTT A \ar[r, "\TTT(a)"] \ar[d, "\mu_A"'] & \TTT A \ar[d, "a"]  \\
\TTT A \ar[r, "a"']  \ar[ru, Rightarrow, shorten <=12pt, shorten >=12pt, 
"\beta"] & A .
\end{tikzcd}
\end{equation}
This $2$-cell is subject to the two equations
\begin{equation}\label{TTTA}
\begin{tikzcd}[column sep={46pt,between origins},row sep={35pt,between origins}]
\TTT\TTT\TTT A \ar[dd, "\mu_{\TTT A}"'] \ar[rd, pos=0.7, "\TTT\mu_A"'] \ar[rr, 
"\TTT\TTT a"] && \TTT\TTT A  \ar[rd, "\TTT a"] &  
\\
 & \TTT\TTT A \ar[dd, "\mu_A"] \ar[rr, "\TTT a"'] \ar[ru, Rightarrow, 
 shorten <=10pt, shorten >=10pt, 
 "T\beta"]&& \TTT A \ar[dd, "a"] 
 \\
 \TTT\TTT A \ar[rd, "\mu_A"'] \ar[ru, phantom, "\commutes" description] & & &
 \\
 & \TTT A \ar[rr, "a"'] \ar[rruu, Rightarrow, shorten <=37pt, shorten 
 >=37pt, "\beta"]&& A
\end{tikzcd}
\quad
= 
\quad 
\begin{tikzcd}[column sep={46pt,between origins},row sep={35pt,between origins}]
\TTT\TTT\TTT A \ar[dd, "\mu_{\TTT A}"']  
\ar[rr, "\TTT\TTT a"] && \TTT\TTT A  \ar[dd, "\mu_A"']\ar[rd, "\TTT a"] & 
\\
 & && \TTT A \ar[dd, "a"]
 \\
 \TTT\TTT A \ar[rd, "\mu_A"'] \ar[rr, "\TTT a"] \ar[rruu, phantom, 
 "\commutes" description] & & \TTT A \ar[rd, "a"] \ar[ru, Rightarrow, 
 shorten <=10pt, shorten >=10pt, "\beta"] &
 \\
 & \TTT A \ar[rr, "a"'] \ar[ru, Rightarrow, shorten <=10pt, shorten >=10pt, "\beta"]&& A
\end{tikzcd}
\end{equation}
and
\begin{equation}\label{trapez}
\begin{tikzcd}
& \TTT\TTT A \ar[rd, "\TTT a"] \ar[dd, "\mu_A"]&   \\
\TTT A \ar[rd, "\id"'] \ar[ru, "\TTT \eta"]& \ar[l, phantom, pos=0.4, "\commutes" 
description]& \TTT A \ar[dd, "a"] \\
& \TTT A \ar[rd, "a"'] \ar[ru, Rightarrow, shorten <=14pt, shorten >=14pt, "\beta"]& \\
&& A
\end{tikzcd}
\qquad
=
\qquad
\begin{tikzcd}
& \TTT\TTT A \ar[rd, "\TTT a"] &   \\
\TTT A \ar[rd, "\id"'] \ar[ru, "\TTT \eta"] \ar[rr, "\id"']& 
\ar[u, phantom, pos=0.4, "\commutes" 
description]& \TTT A \ar[dd, "a"] \\
& \TTT A \ar[rd, "a"'] \ar[ru, phantom, pos=0.4, "\commutes" description]& \\
&& A
\end{tikzcd}
\end{equation}
In \eqref{TTTA}, the commutative square on the left is (strict) associativity of 
the monad multiplication, and the commutative square on the right is 
naturality. In \eqref{trapez},
the commutative triangle on the left is the (strict) unit law of the monad,
and the triangle on the right commutes because the algebra is required to be 
normal.

Instantiating now this notion to the case of the monad $\widetilde\DDD$
on the $2$-category \linebreak $\Fun((\Dt)\op,\Cat)$ of strict $\Cat$-valued
$\Dt$-presheaves (that it, $\DDD$-coalgebras in $\Cat$), we arrive
precisely at the notion of toplax simplicial category. The new top face
operators constitute the structure map for such an algebra. The fact that
the underlying object in $\DDD$-coalgebras is strict is precisely to say
that the restriction to $\Dt$ is strict. The fact that the structure map
is a morphism of strict $\DDD$-coalgebras translates precisely into the
condition that squares that do not involve two consecutive top face
operators must be strictly commutative. The natural simplicial 
transformation $\beta$ 
has components $\beta_n$ which are precisely the beta cells from item (3) in
Definition~\ref{def:toplax}. Equation~\eqref{TTTA} becomes the sequence of equations
\eqref{beta-eq} and Equation~\eqref{trapez} becomes the sequence of equations 
\eqref{beta-eq-bis}. Equations~\eqref{new-eq} and \eqref{new-eq-bis} are
naturality of $\beta$ from \eqref{betaT}.

\begin{remark}\label{Jardine-rmk}
  Except in the special case where all $2$-cells are invertible (which we
  treat in Section~\ref{sec:decomp}), it is not a priori clear that a
  toplax simplicial object is in fact a special case of an oplax
  simplicial object (i.e.~an oplax functor $\simplexcategory\op\to\Cat$).
  The subtlety is that the definition of toplax simplicial object
  provides some non-invertible squares, whereas an oplax simplicial
  object is based on triangles. Each $\beta$-square would correspond to
  two triangles, and the oplax cells sitting in them qua oplax functor
  would be oriented in opposite directions preventing them from being
  composed, as below left
  \[
  \begin{tikzcd}[sep={36pt,between origins}]
  X_{n+2} \ar[dd, "d_{n+1}"'] \ar[rr, "d_{n+2}"] \ar[rrdd]&& X_{n+1} 
  \ar[dd, "d_{n+1}"]  \\
  & \ar[ru, Rightarrow, shorten <=12pt, shorten >=12pt, "\beta"] 
    \ar[ld, Rightarrow, shorten <=10pt, shorten >=10pt, "\omega"'] &\\
  X_{n+1} \ar[rr, "d_{n+1}"'] && X_{n}
  \end{tikzcd}
  \qquad\qquad
  \begin{tikzcd}[sep={36pt,between origins}]
  X_{n+2}  \ar[dd, "d_{n+1}"'] \ar[rr, "d_{n+2}"] \ar[rrdd]&& X_{n+1}  
  \ar[dd, "d_{n+1}"]  \\
  & \ar[ru, Rightarrow, shorten <=12pt, shorten >=12pt, "\beta"] 
   &\\
  X_{n+1}  \ar[rr, "d_{n+1}"'] 
  \ar[ru, Rightarrow, shorten <=10pt, shorten >=10pt, pos=0.8, "\omega^{-1}"]
  && X_{n}
  \end{tikzcd}
  \]
  But in fact one can uniformly define all the $\omega$-cells to be 
  identities. First of all we should choose an image for every 
  composite of generating arrows in $\simplexcategory$:
  each composite corresponds to a simplicial identity, and for all the 
  commuting squares we can choose either way around the square -- they are
  equal. For the only non-commuting square (the $\beta$ squares) we choose as 
  image the composite not involving two consecutive top face operators, 
  thus making the lower triangles in each square an identity $2$-cell.
  In this way we have provided oplax $2$-cells for all triangles in
  $\simplexcategory$. 

  To justify why this is enough to fully define a (normal) oplax functor,
  a coherence theorem is required. Such a coherence result can be
  established based on the supercoherence result of
  Jardine~\cite{JARDINE1991103}, quoted as
  Proposition~\ref{prop:Jardine}. It states that for {\em invertible}
  $2$-cells corresponding to the simplicial identities, certain 17
  families of equations listed among these $2$-cells suffice to obtain a
  fully coherent pseudo-simplicial object. We will invoke Jardine's
  result in the invertible case in Section~\ref{sec:decomp}. In view of
  the subtleties with squares versus triangles, it is not clear that
  Jardine's result can be upgraded to the case of non-invertible
  $2$-cells in full generality, but in the very special case where only
  the $\beta$-cells are not identities, Jardine's proof seems to carry
  over. We do not wish to provide the details, since for the present
  paper it is not logically necessary to know that a toplax simplicial
  object (defined as above in terms of decalage) is actually a special
  case of an oplax functor.
\end{remark}

The following lemma will be useful to establish toplax-ness.

\begin{lemma}\label{reduce-coherence}
  Let $X$ be a pre-toplax simplicial category, meaning that it has all the data
  of a toplax simplicial category, but that the coherence equations have not yet
  been verified. Suppose that all functors $d_0$ are faithful. Then to verify the coherence 
  equations~\eqref{beta-eq}--\eqref{new-eq-bis}, it is enough to check 
  Equation~\eqref{new-eq} for all $n\geq 0$ and $i=0$ and check \eqref{beta-eq} and 
  \eqref{beta-eq-bis} for $n=0$.
\end{lemma}

\begin{proof}
  We shall check each of the remaining equations by first applying the faithful 
  functor $d_0$. We first use a
  standard simplicial argument to show that Equation~\eqref{new-eq} for all $n\geq 0$ and
  $i=0$ implies Equation~\eqref{new-eq} in general. Assume the equation holds for all $i<n$
  and prove by induction that it also holds in the case $n+1$, $j<n+1$, that is $d_j
  \beta_{n+2} = \beta_{n+1} d_j$. If $j=0$ there is nothing to prove. Otherwise it is enough
  to establish the equation after applying $d_0$. But in the resulting equation
  $d_0 d_j \beta_{n+2} = d_0
  \beta_{n+1} d_j$, we can move the $d_0$ to the right (using simplicial identities and the initial assumption). The
  equation then becomes $d_{j-1} \beta_{n+1} d_0 = \beta_{n} d_{j-1} d_0$, which holds by
  the induction hypothesis. The same argument is used to show that the $i=0$ case of 
  \eqref{new-eq-bis} implies all the other cases of \eqref{new-eq-bis}. We now show that
  the $i=0$ case of \eqref{new-eq-bis} follows from the $i=0$ case of \eqref{new-eq}.
  To establish $s_0 \beta_n = \beta_{n+1} s_0$, it is enough to establish 
  $d_0 s_0 \beta_n = d_0 \beta_{n+1} s_0$, since $d_0$ is faithful. On the left, we have
  $d_0 s_0 = \id$, and on the right we can shift $d_0$ right using \eqref{new-eq}, until it 
  reaches $s_0$ and cancels. So we are left with $\beta_n=\beta_n$ which is true.
  
  We now come to the two `substantial' equations, \eqref{beta-eq} and \eqref{beta-eq-bis}.
  To establish the commutativity of the square \eqref{beta-eq} for $n$, it is enough to
  apply $d_0$ and check. But in each of the four sides of the diagram, the $d_0$ can be 
  moved right, using simplicial identities and Equation~\eqref{new-eq}. Doing this yields 
  the $(n-1)$-instance of \eqref{beta-eq} (applied to $d_0$), which we can assume to hold
  by induction (the induction starts at $n=0$, so that case cannot be eliminated).
  The same argument applies to Equation~\eqref{beta-eq-bis}.
\end{proof}

\section{The pita nerve}
\label{sec:pitanerve}

We return to the categories $\fnerve(\catO)_n$, continuing the standing 
assumption that $\catO$ is a \strictlyfactorisable
operadic category.

\medskip

We aim to assemble the categories $\fnerve(\catO)_n$ into a
toplax simplicial category.

There are three arguments involved:

1) 
The categories $\wnerve(\catO)_n$ form a strict simplicial category.

2)
Levelwise we have the reflective subcategories $\fnerve(\catO)_n \subset
\wnerve(\catO)_n$, and we exploit the reflections to obtain 
a pre-simplicial structure on the categories $\fnerve(\catO)_n$
-- by this we mean all the face and degeneracy operators, but not all
the simplicial identities;

3) Finally we exploit the properties of \strictlyfactorisable
categories to show that non-strict part of this pre-simplicial category
enjoys the coherence axioms for being a toplax simplicial category.

\begin{proposition}
  The categories $\wnerve(\catO)_n$, $n\ge 0$ form a strict simplicial 
  category
  $$
\wnerve(\catO) : \simplexcategory\op\to\Cat ,
$$  
  whose face and degeneracy operators we denote by $\dw_n$ and $\sw_n$, 
  respectively.
\end{proposition}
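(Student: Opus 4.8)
The plan is to construct the simplicial structure on $\wnerve(\catO)_\bullet$ by the classical recipe for nerves of chains: on objects, $\wnerve(\catO)_n$ consists of chains $T_n\xto{f_n}\cdots\xto{f_1}T_0$, and on morphisms, of \fop diagrams \eqref{Wmorphisms} with quasibijective horizontal arrows. The inner face operators $\dw_i$ ($0<i<n$) compose $f_{i+1}$ with $f_i$ (removing the object $T_i$); the degeneracies $\sw_i$ insert an identity arrow; the bottom face $\dw_0$ forgets $T_0$ and the map $f_1$; and the top face $\dw_n$ forgets $T_n$ and the map $f_n$. On a morphism \eqref{Wmorphisms}, each of these operators acts by the evident restriction or composition of the horizontal quasibijections $\sigma_i$, together with the induced commutative squares. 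The simplicial identities are then verified exactly as for the ordinary nerve of a category.

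The first thing I would check is that each $\dw_i$ and $\sw_i$ is well-defined as a functor, i.e.\ that it sends a morphism of $\wnerve(\catO)_n$ to a morphism of the target: the horizontal arrows must remain quasibijections, and the resulting diagram must remain \fop. For the degeneracies this is the Observation after Definition~\ref{def:fop} (a square with an identity vertical leg is automatically \fop) together with the fact that identities are quasibijections (Remark~\ref{qbij=bij}). For the inner face $\dw_i$, the composite of two quasibijections is a quasibijection (fibres of a composite are built from fibres of the factors, by Axiom~\ref{axQ:BM-fibres-of-local-fibres}), and the \fop property of the pasted square follows from Proposition~\ref{3out2}(3) (all relevant horizontal maps and their composites with the base $\sigma_0$ are quasibijections here, so \fop is stable under both pasting and depasting along the top face direction). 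For the bottom and top face operators $\dw_0,\dw_n$, one simply discards a row, and the remaining horizontal arrows and squares are still quasibijective and \fop; note in particular that after applying $\dw_0$ the new last object is $T_1$, but since every original $\sigma_i$ was \fop with respect to $\sigma_0$ and the square over $T_0$ is \fop, Proposition~\ref{verticalfop} gives that each $\sigma_i$ is \fop with respect to $\sigma_1$, so the truncated diagram is again \fop. Functoriality of each operator (compatibility with horizontal composition of \fop diagrams) is immediate from Proposition~\ref{3out2}(3), which is precisely what licenses horizontal composition in $\wnerve(\catO)_n$ in the first place.

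Having the face and degeneracy functors, the remaining task is the strict simplicial identities $\dw_i\dw_j=\dw_{j-1}\dw_i$ for $i<j$, $\sw_i\sw_j=\sw_{j+1}\sw_i$, and the three mixed relations. On objects these hold on the nose because they amount to the standard bookkeeping of composing and inserting arrows in a chain (there is no non-strictness here: composition of morphisms in $\catO$ is strictly associative, and inserting identities is strict). On morphisms one checks that both sides act by the same restriction/composition of the horizontal quasibijections and produce literally the same commutative squares; again this is the same verification as for the ordinary nerve functor $\Cat\to\Set^{\simplexcategory\op}$, now carried out one categorical level up and with the side conditions (quasibijection, \fop) preserved by Propositions~\ref{verticalfop} and~\ref{3out2}(3). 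I do not expect a genuine obstacle here: the only subtlety, which the preceding lemmas are designed to dispatch, is that the horizontal arrows are constrained to be quasibijections and the squares constrained to be \fop, and all the operators manifestly respect these constraints. The honest non-strictness of the pita nerve enters only when one passes to the reflective subcategories $\fnerve(\catO)_n$ and uses the reflections $\rrr_n$ to supply top face operators --- that is the content of the subsequent sections, not of this proposition.
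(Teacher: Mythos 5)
Your overall strategy is the right one and matches what the paper (which offers no written proof) clearly intends: the well-definedness of the operators rests on Proposition~\ref{verticalfop} and Proposition~\ref{3out2}(3), which are exactly the two results the paper invokes when setting up $\wnerve(\catO)_n$, and the strict simplicial identities then hold on the nose as for an ordinary nerve. Two points need correcting, though. First, your indexing convention is the reverse of the paper's: the Apology immediately following the proposition states that the \emph{top} face operator $\dw_n$ omits $T_0$ and the \emph{bottom} face operator $\dw_0$ omits $T_n$, whereas you assign these the other way around. Either convention yields a strict simplicial object, so this does not invalidate the argument for the present proposition, but the later construction of the pita nerve hinges on $\dw_n$ being the operator that deletes the base object $T_0$ (that is precisely why $\dd_n$ must be repaired by the reflection $\rrr_{n-1}$), so your convention would derail the subsequent sections. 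Your application of Proposition~\ref{verticalfop} to the face removing $T_0$ is correct in substance; it is just attached to the wrong index. Second, your remark that the inner face operators require closure of quasibijections under composition is off target: the face operators compose \emph{vertical} arrows $f_{i+1}\circ f_i$ (arbitrary morphisms) and merely discard one horizontal arrow; since the \fop condition in Definition~\ref{def:fopdiagram} refers only to the composites down to $T_0$, which are unchanged by deleting an intermediate object, the inner faces are well defined for free. Composites of quasibijections (and Proposition~\ref{3out2}(3)) are needed only for composition \emph{within} each category $\wnerve(\catO)_n$, as you correctly note at the end of that paragraph.
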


\begin{apology}
  Note that the top face operator $\dw_n$ omits object $T_0$ and that the
  bottom face operator $\dw_0$ omits object $T_n$, which is unfortunate
  from the viewpoint of standard simplicial indexing. We apologise for
  this awkward situation. We have made this choice because it seems more
  important that locally order-preserving chains (which are the main
  interest) end in object
  $T_0$, the reference point for the notion of fibrewise order-preserving
  map, and that this index should always be zero, not a number depending
  on the length of the chain.
\end{apology}

In contrast to $\wnerve(\catO)$, the categories $\fnerve(\catO)_n$ do not
form a strict simplicial object by restriction of simplicial operators. The reason
is that the result of an application of the last simplicial operator
$\dw_n$ to a locally order-preserving chain $T_n\xto{f_n} \ldots \xto{f_1} T_0$ gives
 $T_n\xto{f_n} \ldots \xto{f_2} T_1$, which is not
necessary a locally order-preserving chain.
But we do get a simplicial-object-with-missing-top-face-operators, or 
more formally, a strict presheaf on the subcategory $\Dt\subset\simplexcategory$ of 
last-element-preserving order-preserving maps:
Let
$$
\dd_i = \dw_i:\fnerve(\catO)_{n+1}\to \fnerve(\catO)_n, 
\ \text{and}   \ \ss_i= \sw_i : \fnerve(\catO)_n\to \fnerve(\catO)_{n+1}, \ 0\le i \le n
$$
be the restrictions of the corresponding face and degeneracy operators of
$\wnerve(\catO)$. (Only the top face operator in each degree is missing.)

\begin{lemma}\label{nat-rrr}
  The reflection functors $\rrr_n : \wnerve(\catO)_n \to \fnerve(\catO)_n$
  assemble into a natural transformation of $\Dt$-presheaves.
\end{lemma}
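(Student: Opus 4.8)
The plan is to verify naturality square by square: for each generating face or degeneracy operator $\delta$ of $\Dt$ (so $\dd_i$ for $0\le i\le n$, omitting the top face, and $\ss_i$ for $0\le i\le n$), I would show that the square
\[
\begin{tikzcd}
\wnerve(\catO)_{n+1} \ar[r, "\rrr_{n+1}"] \ar[d, "\delta"'] & \fnerve(\catO)_{n+1} \ar[d, "\delta"] \\
\wnerve(\catO)_{n} \ar[r, "\rrr_{n}"'] & \fnerve(\catO)_{n}
\end{tikzcd}
\]
commutes strictly (on objects and on morphisms). Since $\rrr_n$ is the idempotent monad $\tttt_n$ restricted along the inclusion, and since these reflections were constructed explicitly in Proposition~\ref{wfn} via pita factorisation, this amounts to comparing two explicit formulas. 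The key observation that makes everything go through is the behaviour of the functions $\eta(-)$ and $\eta(-/-)$ under omitting and doubling morphisms in a chain, which is exactly what Lemma~\ref{etafunction} records.

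First I would handle the degeneracy operators $\ss_i$, which double up $f_{i+1}$ by inserting an identity. On the $\wnerve$-side, applying $\ss_i$ to $T_n\xto{f_n}\cdots\xto{f_1}T_0$ inserts $\id$ between $T_{i+1}$ and $T_i$; then $\rrr_{n+1}$ replaces each $f_k$ by $\eta(f_k/f_1\cdots f_{k-1})$, and in particular the inserted $\id$ becomes $\eta(\id/f_1\cdots f_i)=\id$ by Lemma~\ref{etafunction}(5). The other entries $\eta(f_k/f_1\cdots f_{k-1})$ are literally unchanged, since inserting the identity after position $i$ does not alter any of the composites $f_1\cdots f_{k-1}$ appearing in the formula. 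Comparing with first applying $\rrr_n$ and then $\ss_i$ gives the same chain. On morphisms the comparison is the same: the horizontal components are the $\pi(\sigma_0\eta(\cdots))$ from Diagram~\eqref{WTmorphisms}, and $\ss_i$ merely repeats one of them, which matches.

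Next the face operators. For $\dd_i$ with $1\le i\le n$ one contracts $T_{i+1}\xto{f_{i+1}}T_i\xto{f_i}T_{i-1}$ to $T_{i+1}\xto{f_i f_{i+1}}T_{i-1}$. After $\rrr$, the entry at position $i$ in the contracted chain should be $\eta\big((f_i f_{i+1})/(f_1\cdots f_{i-1})\big)$, and I must check this equals the composite $\eta(f_i/f_1\cdots f_{i-1})\,\eta\big(f_{i+1}/(f_1\cdots f_{i-1})(f_i)\big)$ of the two entries coming from $\rrr_{n+1}$ followed by $\dd_i$ --- but this is precisely the relation $\eta(g/h)\eta(f/hg)=\eta(gf/h)$ of Lemma~\ref{etafunction}(3), with $h=f_1\cdots f_{i-1}$, $g=f_i$, $f=f_{i+1}$; all entries away from position $i$ are untouched. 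The bottom face $\dd_0$ omits $T_n$, which on the $\rrr$-side just deletes the top entry $\eta(f_n/f_1\cdots f_{n-1})$ and leaves the rest, so it commutes trivially. The morphism-level checks reduce to the corresponding identities among the $\pi(\sigma_0\eta(\cdots))$ coming out of Proposition~\ref{fundamental}, in the same style as the $n=2$ computation carried out in the proof of Proposition~\ref{wfn}.

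The main obstacle is the bookkeeping for $\dd_i$ on morphisms: one must confirm that contracting two adjacent squares of the form in Diagram~\eqref{WTmorphisms} and then applying $\rrr$ produces the \emph{same} horizontal quasibijection as applying $\rrr$ first and then contracting. This is exactly the coherence already proved in the $n=2$ case (the big prism diagram in the proof of Proposition~\ref{wfn}, whose commuting middle square was forced by the uniqueness clause of Proposition~\ref{fundamental}(1)); the general case follows by the same argument applied to the three consecutive levels $T_{i+1},T_i,T_{i-1}$, with everything else inert. I expect the proof to be short, essentially a bookkeeping argument citing Lemma~\ref{etafunction} and Proposition~\ref{fundamental}.
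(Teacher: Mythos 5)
Your proposal is correct and follows essentially the same route as the paper: compare the explicit formulas for $\rrr_{n+1}$ followed by a $\Dt$-operator against the operator followed by $\rrr_n$, and resolve the apparent discrepancies at the affected position via Lemma~\ref{etafunction} items (3) and (5) (plus item (4) for the degeneracy duplicating $T_0$). The morphism-level bookkeeping you flag as the main obstacle can be dispatched more cheaply than by redoing the prism argument: by Lemma~\ref{df} a morphism in $\fnerve(\catO)_n$ is uniquely determined by its domain and its bottom component $\sigma_0$, and both composites around the square preserve these, so agreement on objects already forces agreement on morphisms.
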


\begin{proof}
  We need to check the commutativity of the squares
   \begin{equation*}
     \centerline{
    \xymatrix@C = +3em{
     \wnerve(\catO)_{n+1} 
     \ar[d]_{\dw_i} \ar[r]^{\rrr_{n+1}}& 
      \fnerve(\catO)_{n+1}
      \ar[d]^{\dd_i} \\
    \wnerve(\catO)_n \ar[r]_{\rrr_n} & \fnerve(\catO)_n
}    
\quad \text{\raisebox{-4.5ex}{and}} \quad
    \xymatrix@C = +3em{
     \wnerve(\catO)_{n+1} 
      \ar[r]^{\rrr_{n+1}}& 
      \fnerve(\catO)_{n+1}
       \\
    \wnerve(\catO)_n \ar[u]^{\sw_i} \ar[r]_{\rrr_n} & \fnerve(\catO)_n 
    \ar[u]_{\ss_i}
}    
\quad \text{\raisebox{-4.5ex}{for  $0\leq i \leq n$.}}
}
\end{equation*}
For the face operators: going right and then down in the square,
the reflection functor $\rrr_{n+1}$ sends a chain
$T_{n+1} \stackrel{f_{n+1}}\to \cdots 
\stackrel{f_2}\to T_1\stackrel{f_1}\to T_0$ to the chain
$$
T'_{n+1} \stackrel{\eta(f_{n+1}/f_1\cdots f_n)}\longrightarrow \cdots \stackrel{\eta(f_2/f_1)}\longrightarrow 
T'_1\stackrel{\eta(f_1)}\longrightarrow T_0 ,
$$
and the $i$th face operator $\dd_i$ then composes at object $T'_{n+1-i}$ to get 
at that step
\begin{equation}\label{rightdown}
T'_{n+2-i} \xto{\eta(f_{n+1-i}/f_1\cdots f_{n-i}) \circ \eta(f_{n-i}/f_1\cdots 
f_{n-i-1})} T'_{n-i} .
\end{equation}
If instead we go down and then right in the square, the effect of the 
face operator $\dw_i$ is to compose at 
object $T_{n+1-i}$ to get (at that step) $f_{n+2-i}\circ f_{n+1-i}$, and then we 
apply $\rrr_n$ to get a long chain of eta maps, where the interesting 
part is
\begin{equation}\label{downright}
T'_{n+2-i} \xto{\eta(f_{n+1-i}\circ f_{n-i} / f_1\cdots 
f_{n-i-1})} T'_{n-i} .
\end{equation}
Elsewhere in the chain it is clear that the two ways around give the 
same result. The apparent discrepancy between \eqref{rightdown} and 
\eqref{downright} is 
resolved by Lemma~\ref{etafunction} item (3), which shows that they are in fact 
equal. 

The argument for the degeneracy operators is similar: starting with a 
chain $T_n \to \cdots \to T_0$, the effect of applying $\rrr_n$ is to 
get a long chain of eta maps, and $\dd_i$ then duplicates the object
$T'_{n-i}$ by inserting an identity map
$$
T'_{n-i} \stackrel{\id}\to T'_{n-i} \,.
$$
The other way around, we first apply $\sw_i$ to have an identity map 
$T_{n-i} \stackrel{\id}\to T_{n-i}$, and then we apply $\rrr_{n+1}$ 
to get a chain of eta maps. The interesting step in that chain is
$$
T'_{n-i} \xto{\eta(\id/f_1\cdots f_{n-i})} T'_{n-i}   \,.
$$
Again the apparent discrepancy between these two results is resolved by 
\linebreak
Lemma~\ref{etafunction}, this time item (5). (Note that for $i=n$ we need
Lemma~\ref{etafunction} item (4) instead.)
\end{proof}

In each simplicial degree $n\geq 0$, we define separately the top face operator
$$
\dd_n: \fnerve(\catO)_n \to \fnerve(\catO)_{n-1} 
$$ 
as the composite
$$
\fnerve(\catO)_n \xto{\iii_n} \wnerve(\catO)_n \xto{\dw_n} \wnerve(\catO)_{n-1}
\xto{\rrr_{n-1}}  \fnerve(\catO)_{n-1}  \,.
$$
For what follows it will be convenient to consider this operator as a
restriction of the endofunctor $\tttt_{n-1}\dw_n:\fnerve(\catO)_n\to\fnerve(\catO)_n$ and
write
$$
\dd_n = \tttt_{n-1}\dw_n \,.
$$
Because the definition of these top face operators
$\dd_n$ involves this extra back-and-forth,
they will not 
satisfy the strict simplicial identities, but we shall see shortly that
we get precisely the required $\beta$-cells

\[
\begin{tikzcd}
\fnerve(\catO)_{n+2} \ar[d, "\dd_{n+1}"'] \ar[r, "\dd_{n+2}"] & 
\fnerve(\catO)_{n+1} \ar[d, "\dd_{n+1}"]  \\
\fnerve(\catO)_{n+1} \ar[r, "\dd_{n+1}"'] \ar[ru, Rightarrow, shorten <=14pt, shorten 
>=14pt, "\beta_n"]& \fnerve(\catO)_n
\end{tikzcd}
\]
to have a toplax simplicial object (see Equation~\eqref{beta} further 
below).

\begin{lemma}\label{strict-part}
  All the other simplicial identities involving top face operators hold 
  strictly:
  $$
  \xymatrixrowsep{2.1em}
  \xymatrixcolsep{2.7em}
  \xymatrix 
  @!R0 
  @!C0
    {
     P_{n+2} 
     \ar[dd]_{d_{i}} \ar[rr]^{d_{n+2}} && 
      P_{n+1}
      \ar[dd]^{d_{i}} \\
      & \text{\footnotesize \rm (i)}
      & \\
    P_{n+1} \ar[rr]_{d_{n+1}} && P_{n}
  }
  \qquad
  \xymatrix  
  @!R0 
  @!C0
    {
     P_{n+2}
      \ar[rr]^{d_{n+2}} && 
      P_{n+1}
       \\
      & \text{\footnotesize \rm (ii)}
      & \\
    P_{n+1} \ar[uu]^{s_{i}} \ar[rr]_{d_{n+1}} && P_{n} \ar[uu]_{s_{i}}
  }
  \qquad
  \xymatrix
  @!R0 
  @!C0
    {
     P_{n+1}
      \ar[rr]^{d_{n+1}} && 
      P_{n}
       \\
      & \text{\footnotesize \rm (iii)}
      & \\
    P_{n} \ar[uu]^{\hspace{-8pt}s_{n}} \ar@/_1.2pc/[rruu]_{\id} & 
    \phantom{P_{n}}
  }
$$
\end{lemma}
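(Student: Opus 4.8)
The three identities to verify are the simplicial identities $d_i d_{n+2} = d_{n+1} d_i$ (for $i \le n$), $d_{n+2} s_i = s_i d_{n+1}$ (for $i \le n$), and $d_{n+1} s_n = \id$, now instantiated in $\fnerve(\catO)$ where the top face operators carry the extra reflection. My strategy is to reduce each of them to the corresponding \emph{strict} identity already available for $\wnerve(\catO)$ (which is a genuine simplicial object), together with the naturality of the reflection established in Lemma~\ref{nat-rrr}. The key bookkeeping device is the formula $\dd_n = \tttt_{n-1}\dw_n$ (with $\tttt$ the idempotent monad on $\wnerve(\catO)_n$), so that a top face operator applied to a locally order-preserving chain is literally ``apply the naive face operator of $\wnerve$, then reflect back into $\fnerve$''; all the \emph{non}-top operators $\dd_i,\ss_i$ ($i\le n$) are just restrictions of $\dw_i,\sw_i$.

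\textbf{Identity (i).} Here I compute both composites $\fnerve(\catO)_{n+2}\to\fnerve(\catO)_n$. The left-down route is $\dd_{n+1}\circ\dd_{n+2} = \tttt_n\dw_{n+1}\circ\iii_{n+1}\dw_{n+2}$; the right-down route is $\dd_i\circ\dd_{n+2}$ for one of them and $\dd_{n+1}\circ\dd_i$... wait, more precisely the square says $d_i\circ d_{n+2} = d_{n+1}\circ d_i$ with $i\le n$, so one side is $\dw_i\circ\iii_{n+1}\dw_{n+2} = \iii_n\dw_i\dw_{n+2}$ followed by... no: the top operator in degree $n{+}1$ is $\dd_{n+1}$, so the side going through $P_{n+1}$ via $d_i$ then up-top is $\tttt_{n-1}\dw_{n+1}\circ\iii_{n+1}\dw_i$. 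The point is that $\dw_i$ commutes with $\dw_{n+2}$ and with $\dw_{n+1}$ by the strict simplicial identities in $\wnerve$, and $\dw_i$ also commutes with the reflections $\rrr_*$ by Lemma~\ref{nat-rrr} (for $0\le i\le n$). So both composites rewrite, using these commutations, to $\rrr_{n-1}\circ\dw_{n+1}\circ\dw_{n+2}\circ\iii_{n+2}$ after pushing the single reflection and the single $\iii$ to the outside; I just need to track that the $\dw$-indices match up via the simplicial identity $\dw_i\dw_{n+2}=\dw_{n+1}\dw_i$ in the right range. This is routine once the indexing is set up.

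\textbf{Identities (ii) and (iii).} For (ii), $d_{n+2}s_i = s_i d_{n+1}$ with $i\le n$: the left side is $\tttt_{n}\dw_{n+1}\circ\sw_i = \rrr_{n}\dw_{n+1}\sw_i\iii$; using the strict identity $\dw_{n+1}\sw_i = \sw_i\dw_n$ in $\wnerve$ (valid since $i\le n$) and then naturality of $\rrr$ (Lemma~\ref{nat-rrr}) to slide $\rrr$ past $\sw_i$, this becomes $\ss_i\circ\tttt_{n-1}\dw_n\iii = \ss_i\circ\dd_{n+1}$, which is the right side. For (iii), $d_{n+1}s_n = \id$ on $\fnerve(\catO)_n$: here $d_{n+1}s_n = \tttt_{n-1}\dw_n\circ\ss_n$, and $\ss_n$ inserts an identity map at the top, so $\dw_n$ applied to that immediately deletes it again, giving $\dw_n\ss_n = \id$ on $\wnerve(\catO)_n$ already; but we still have the stray $\tttt_{n-1}$. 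The resolution is that we start from a \emph{locally order-preserving} chain, so $\ss_n$ of it is still locally order-preserving (inserting an identity does not disturb the condition on composites ending in $T_0$), hence already lies in $\fnerve$, and there $\tttt_{n-1} = \rrr_{n-1}\iii_{n-1}$ acts as the identity (the counit being the identity by Lemma~\ref{strongff} / Proposition~\ref{wfn}). So $d_{n+1}s_n = \id$.

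\textbf{Main obstacle.} The only real subtlety — and the step I would write most carefully — is the index arithmetic in identity (i): one must be scrupulous about which of the two outer face operators is the ``special'' top one in each degree (degree $n{+}2$ vs.\ degree $n{+}1$), since these carry $\tttt$ while the inner $\dw_i$ does not, and the strict simplicial identity being invoked, $\dw_i\dw_{n+2} = \dw_{n+1}\dw_i$, only holds in the appropriate range of $i$. Everything else is an immediate consequence of ``$\dw_i,\sw_i$ commute with the reflection'' (Lemma~\ref{nat-rrr}) plus the idempotence of $\tttt$ on the subcategory $\fnerve$. Since there is no non-strictness hiding here — the non-strictness is entirely quarantined into the $\beta_n$-squares between two consecutive top face operators — these identities genuinely hold on the nose, and the proof is a matter of unwinding definitions rather than invoking the fine structure of \strictlyfactorisable{} categories.
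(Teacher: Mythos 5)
Your proof follows essentially the same route as the paper's: each identity is reduced to the corresponding strict simplicial identity in $\wnerve(\catO)$, the commutation of the inclusions $\iii_\bullet$ with the non-top operators (the $\Dt$ range), the $\Dt$-naturality of the reflections (Lemma~\ref{nat-rrr}), and, for (iii), the strict cancellation $\rrr_n\iii_n=\id$ coming from the identity counit. Several of your intermediate formulas have shifted indices -- e.g.\ the common value in (i) should be $\rrr_n\,\dw_{n+1}\dw_i\,\iii_{n+2}$ rather than $\rrr_{n-1}\,\dw_{n+1}\dw_{n+2}\,\iii_{n+2}$ (which does not even compose), and in (ii) the left-hand side is $\rrr_{n+1}\dw_{n+2}\sw_i\,\iii_{n+1}$ with the relevant $\wnerve$-identity being $\dw_{n+2}\sw_i=\sw_i\dw_{n+1}$ -- but these are exactly the bookkeeping slips you flagged and do not affect the correctness of the argument.
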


\begin{proof}
  For fixed  $n\geq 0$ and 
$i\leq n$, square~(i) is obtained as the composite
   \begin{equation*}
    \xymatrix@C = +3em{
    \fnerve(\catO)_{n+2} \ar[d]_{\dd_i} \ar[r]^{\iii_{n+2}} & 
    \wnerve(\catO)_{n+2} \ar[d]_{\dw_i} 
    \ar[r]^{\dw_{n+2}} & \wnerve(\catO)_{n+1} 
     \ar[d]_{\dw_i} \ar[r]^{\rrr_{n+1}}& 
      \fnerve(\catO)_{n+1}
      \ar[d]^{\dd_i} 
    \\
     \fnerve(\catO)_{n+1} \ar[r]_{\iii_{n+1}}  & \wnerve(\catO)_{n+1} 
     \ar[r]_{\dw_{n+1}}& \wnerve(\catO)_{n} \ar[r]_{\rrr_{n}} & 
     \fnerve(\catO)_{n} \,.
}    
\end{equation*}
Here the first square commutes since $i\leq n$, so we are in the $\Dt$ 
range. The second square is a simplicial identity for $\wnerve(\catO)$ 
(since $i\leq n$), and the third square commutes by 
$\Dt$-naturality of $\rrr$ (cf.~Lemma~\ref{nat-rrr}).

The argument for the sequence of squares (ii) is essentially the same.

For the triangle (iii), the argument is
   \begin{equation*}
    \xymatrix@C = +3em{
    \fnerve(\catO)_{n+1}  \ar[r]^{\iii_{n+1}} & 
    \wnerve(\catO)_{n+1} 
    \ar[r]^{\dw_{n+1}} & \wnerve(\catO)_{n} 
     \ar[r]^{\rrr_{n}}& 
      \fnerve(\catO)_{n} \,.
    \\
     \fnerve(\catO)_{n} \ar[u]^{\ss_{n}} \ar[r]_{\iii_{n}}  & 
     \wnerve(\catO)_{n} \ar[u]^{\sw_{n}} 
     \ar[ru]_{\id}&  & 
}    
\end{equation*}
Here the square commutes because we are within the $\Dt$ range; the 
triangle is a simplicial identity for $\wnerve(\catO)$, and now
$\iii_{n}$ and $\rrr_{n}$ cancel out strictly, so we are left with the 
identity.
\end{proof}

We define $\beta$ to be the natural transformation
  \begin{equation}\label{beta}
  \beta_{n}:\dd_{n+1}\dd_{n+1}\to \dd_{n+1} \dd_{n+2}
  \end{equation}
  given as the composite
  $$
  \tttt_{n}\dw_{n+1}\dw_{n+1}  = \tttt_{n}\dw_{n+1}\dw_{n+2} 
  \xto{ \ \tttt_{n}\dw_{n+1}\unit_{n+1}{\dw_{n+2}} \ }
  \tttt_{n}\dw_{n+1}\tttt_{n+1}\dw_{n+2},
 $$
  where $\unit_{n+1}{\dw_{n+2}}:\dw_{n+2} \to \tttt_{n+1}\dw_{n+2}$ is the unit of
  the monad $\tttt_{n+1}$ precomposed with $\dw_{n+2}$.
  Note that the components of $\beta_{n}$ are fibrewise 
  order-preserving, cf.~\eqref{Wmorphisms}.
  
\begin{theorem}\label{laxcoherence}
  For a \strictlyfactorisable operadic category $\catO$, the sequence of
  categories $\fnerve(\catO)_n$, $n\ge 0$ with the face and degeneracy
  operators and $\beta$-cells as defined above form a toplax simplicial object
  $\fnerve(\catO)$ in $\Cat$.
\end{theorem}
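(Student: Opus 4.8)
The plan is to check the three ingredients of the definition of top-lax simplicial object in turn, most of the preparatory work having been done in the preceding sections. The underlying $\Dt$-presheaf is the one described above: the categories $\fnerve(\catO)_n$ with the operators $\dd_i,\ss_i$ ($i$ in the $\Dt$-range) obtained by restricting the strict simplicial category $\wnerve(\catO)$. This restriction is legitimate because those operators send locally order-preserving chains to locally order-preserving chains (they only compose maps away from $T_0$, or insert identities) and $\fnerve(\catO)_n\subseteq\wnerve(\catO)_n$ is a full subcategory. The top face operators $\dd_n=\tttt_{n-1}\dw_n$ and the $2$-cells $\beta_n$ of \eqref{beta} have already been produced; $\beta_n$ is natural, being a whiskering of natural transformations, and has fibrewise order-preserving components, cf.\ \eqref{Wmorphisms}. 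So the remaining task is the verification of the axioms.

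For the strict simplicial identities: any simplicial identity involves at most two face or degeneracy operators, and the only ones relaxed to a $2$-cell are those between two consecutive top face operators, namely $\dd_{n+1}\dd_{n+2}=\dd_{n+1}\dd_{n+1}$. Identities lying entirely in the $\Dt$-range hold strictly because $\fnerve(\catO)$ is a sub-$\Dt$-presheaf of the strict simplicial category $\wnerve(\catO)$; identities involving exactly one top face operator are precisely the content of Lemma~\ref{strict-part}. This establishes axiom (1).

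Equation \eqref{beta-eq-bis} I would dispatch first, as it is short. Whiskering $\beta_n$ by $\ss_{n+1}$ and using the strict identity $\dw_{n+2}\sw_{n+1}=\id$ of $\wnerve(\catO)$, the resulting $2$-cell is $\tttt_n\dw_{n+1}$ whiskered with $\unit_{n+1}\iii_{n+1}$. Because the counit of $\rrr_{n+1}\dashv\iii_{n+1}$ is the identity (Proposition~\ref{wfn}), the triangle identity gives $\unit_{n+1}\iii_{n+1}=\id$, so this $2$-cell is the identity, which is exactly what \eqref{beta-eq-bis} asserts.

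The real work is \eqref{beta-eq}. Unfolding $\beta_m=\tttt_m\dw_{m+1}(\unit_{m+1}\dw_{m+2})$ and the identifications $\dw_{m+1}\dw_{m+1}=\dw_{m+1}\dw_{m+2}$ valid in $\wnerve(\catO)$, both sides become pastings built from the units $\unit_{n+1},\unit_{n+2}$ whiskered by various $\dw$'s and $\tttt$'s, together with strictly commuting squares. The plan is to reorganise both sides by the interchange law, then slide each reflection $\tttt_k$ past the $\Dt$-range face operators it meets using the $\Dt$-naturality of $\rrr$ (Lemma~\ref{nat-rrr}) — which also yields the compatibility of the units $\unit_k$ with those operators — and finally collapse the redundant units and multiplications using idempotency of the monads $\tttt_k$ (for which, the counit being an identity, $\mu_k=\id$ and $\tttt_k\unit_k=\unit_k\tttt_k=\id$). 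Both sides should thereby reduce to the same canonical $2$-cell, namely $\tttt_n\dw_{n+1}\dw_{n+1}$ whiskered with $\unit_{n+1}$ precomposed with a suitable double face operator. Read off at a chain $T_{n+3}\to\cdots\to T_0$, this is the assertion that the two iterated pita factorisations of the chain agree, the coincidence being governed by the relations $\eta(g/h)\eta(f/hg)=\eta(gf/h)$, $\eta(g)\eta(f/g)=\eta(gf)$ and $\pi(\eta(f))=\id$ of Lemma~\ref{etafunction}, together with Proposition~\ref{fundamental}. I expect the main obstacle to be purely organisational: keeping straight which of the many occurring face operators are in the $\Dt$-range (hence commute past reflections) and which are top face operators, and matching up the objects appearing in the two iterated factorisations. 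Once this bookkeeping is set up, each step invoked is either a monad axiom, an instance of Lemma~\ref{nat-rrr}, or an instance of Lemma~\ref{etafunction}.
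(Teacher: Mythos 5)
Your treatment of the strict identities and of Equation~\eqref{beta-eq-bis} matches the paper: the strict part is exactly Lemma~\ref{strict-part} together with the sub-$\Dt$-presheaf observation, and your triangle-identity argument for \eqref{beta-eq-bis} (identity counit forces $\unit_{n+1}\iii_{n+1}=\id$) is a clean formal rendering of the paper's direct check that the reflection does nothing to a chain that is already locally order-preserving. Both are fine.

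The gap is in \eqref{beta-eq}. The identity $\beta_{n}\dd_{n+3}\circ\beta_{n}\dd_{n+1}=\dd_{n+1}\beta_{n+1}\circ\beta_{n}\dd_{n+2}$ is \emph{not} a formal consequence of interchange, $\Dt$-naturality of $\rrr$ and idempotency of the monads $\tttt_k$: after those manipulations you are still left comparing two genuinely different iterated pita factorisations, and the Warning in Section~\ref{funfact} shows that such comparisons can fail even in $\Fin$ (e.g.\ $\pi(fg)\neq\pi(\pi(f)\eta(g))\pi(g)$). So ``both sides should thereby reduce to the same canonical $2$-cell'' is precisely the assertion that needs proof, and Lemma~\ref{etafunction} alone does not deliver it. The paper proceeds differently: (i) it computes the components of $\beta$ explicitly as \fop diagrams (Diagram~\eqref{beta-fop}); (ii) it invokes the discrete opfibration Lemma~\ref{df} -- which your proposal never uses -- to conclude that a morphism of locally order-preserving chains is uniquely determined by its degree-$0$ component, so the whole coherence collapses to a single identity between quasibijections in the bottom square, namely \eqref{first1}: $\pi(\eta(f_3/f_2))\pi(f_2f_3)=\pi(\pi(f_2)\eta(f_3))\pi(f_3)$; and (iii) it proves \eqref{first1} by applying the \emph{uniqueness} clause of Proposition~\ref{fundamental}(1) to the \fop square with horizontal arrows $\pi(f_2f_3)$ and $\pi(f_2)$ and vertical arrows $f_3$ and $\eta(f_3/f_2)$: both sides of \eqref{first1} fill the same prism, hence coincide. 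Your proposal cites Proposition~\ref{fundamental} only in passing and does not identify which square it is applied to or that it is the uniqueness statement that does the work; without step (iii), and without step (ii) to avoid the ``long and complicated relations'' in higher degrees, the argument does not close. This is exactly where strict factorisability enters, and it is the mathematical heart of the theorem.
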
 

\begin{proof}
  We have already checked in Lemma~\ref{strict-part} that all the simplicial identities
  other than the $\beta$ cases are strict. We need to check that the $2$-cells $\beta$
  satisfy Equations~\eqref{beta-eq} and \eqref{beta-eq-bis}, as well as
  Equations~\eqref{new-eq} and \eqref{new-eq-bis}. Since we know from Lemma~\ref{df} that
  $d_0$ is a discrete opfibration, and in particular is faithful,
  Lemma~\ref{reduce-coherence} tells us that it is enough to establish
  Equation~\eqref{new-eq} for all cases $n\geq 0$ and $i=0$, and then the $n=0$ case of the
  two `substantial' equations \eqref{beta-eq} and \eqref{beta-eq-bis}.

  Let us first analyse the $\beta$-cells, starting with
  $\beta_0:\dd_{1}\dd_{1}\to \dd_{1} \dd_{2}$. By
  definition, the component $(\beta_0)_a$ on a locally order-preserving chain $a=
  (T_2\xto{f_2} T_1 \xto{f_1}T_0)$ is given by
  $\tttt_0(\dw_1(\unit_1{\dw_2(a)})).$ The monad $\tttt_0$ is the
  identity and $\dw_2(a) = (T_2\xto{f_2} T_1) .$ The unit of the monad
  $\tttt_1$ on this chain is given by the \fop square:
    \begin{equation*}
    \xymatrix@C = +2em{
     T_{2} \ar[d]_{f_2} \ar[r]^{\pi(f_2)}& T'_{2} \ar[d]^{\eta(f_2)} 
 \\
     T_1 \ar[r]_{\id} & T_1   \,.
}    
\end{equation*}  
  Hence $(\beta_0)_a$ is equal to the quasibijection $\pi(f_2):T_2 =
  \dd_1\dd_1(a) \to T_2' = \dd_1\dd_2 (a).$

  Let us also compute the component of $\beta_1:\dd_{2}\dd_{2}\to \dd_{2}
  \dd_{3}$ at a locally order-preserving chain $b= (T_3\xto{f_3}T_2\xto{f_2} T_1
  \xto{f_1} T_0)$. The unit $\unit_2\delta_3(b)$ is presented by the \fop
  diagram
 \begin{equation*}
    \xymatrix@C = +5em{
     T_3
      \ar[d]_{f_3}
       \ar[r]^{\pi(f_2f_3)}& 
      T'_3 \ar[d]^{\eta(f_3/f_2)}
           \\
    T_2 \ar[d]_{f_2}\ar[r]^{\pi(f_2)} & T'_2 \ar[d]^{\eta(f_2)}  
      \\ 
     T_1 \ar[r]_{\id} & T_1    \,.
}    
\end{equation*}
  The result of application of $\delta_2$ to this morphism is the top
  square in this diagram. We now apply $\tttt_1$ to this square and
  obtain a fibrewise order-preserving square which represents
  $(\beta_1)_b$:
\begin{equation}\label{bb1}
    \xymatrix@C = +5em{
     T'_3 \ar[d]_{\eta(f_3)} \ar[r]^{\pi(\pi(f_2)\eta(f_3))}& T''_3 \ar[d]^{\eta(\eta(f_3/f_2))}
      \\
    T_2 \ar[r]_{\pi(f_2)} & T'_2  \,.
}    
\end{equation}

  In general, with reference to Diagram~\eqref{verticalcomposition} the
  transformation $\beta_{n-2}$ on a locally order-preserving chain $ a =
  (T_{n}\stackrel{f_{n}}{\to} \cdots \stackrel{f_1}{\to} T_0)$ is given
  by the following \fop diagram:
\begin{equation}\label{beta-fop}
    \xymatrix@C = +2.5em{
     T'_{n} \ar[d]_{\eta(f_{n}/f_{3}\cdots f_{n-1})} 
     \ar[rr]^{\pi(\pi(f_2)\eta(f_3\cdots f_n))}
     & &
     T''_{n} \ar[d]^{\eta(\eta(f_n/f_{2}\cdots 
     f_{n-1})/\eta(f_{3}\cdots f_{n-1}/f_2))} 
           \\ 
           \vdots \ar[d] & \vdots & \vdots \ar[d] \\
 T_3'   \ar[d]_{\eta(f_3)}  \ar[rr]^{\pi(\pi(f_2)\eta(f_3))}  & & 
 T_3''
   \ar[d]^{\eta(\eta(f_3/f_2))}
      \\ 
     T_2 \ar[rr]_{\pi(f_2)} &&  \phantom{\,,}T'_2   \,,
}    
\end{equation}   
  where the right-hand side of the diagram is calculated using the
  relations of Lemma~\ref{etafunction}.

  Having understood the components of $\beta$, it is easy to establish
  the $i=0$ case of Equation~\eqref{new-eq}, which says
  $$
  d_0 \beta_{n+1} = \beta_n d_0.
  $$
  This equation holds because it is the same first to delete the beginning of the
  chain and then apply the $\beta$-construction, or first applying $\beta$ and then
  delete the beginning of the chain.
  
  It now remains to establish the two `substantial' equations \eqref{beta-eq} and 
  \eqref{beta-eq-bis}, and it is now enough to deal with the $n=0$ case.
   Equation~\eqref{beta-eq} is the
  commutative diagram
  \begin{equation}\label{betacoherence1}
    \begin{tikzcd}[row sep={9ex,between origins}, column sep={7em,between origins}]
    \dd_{1} \dd_{1}\dd_{1}\ar[r, "\id"] \ar[d, "\beta_{0}\dd_{1}"'] 
    & \dd_{1}\dd_{1}\dd_{2} \ar[r, "\beta_{0}\dd_{2}"]
    & \dd_{1} \dd_{2} \dd_{2} \ar[d, "\dd_{1}\beta_{1}"]
    \\
    \dd_{1}\dd_{2} \dd_{1} \ar[r, "\id"']
    & \dd_{1}\dd_{1}\dd_{3}  \ar[r, "\beta_{0}\dd_{3}"']
    & \dd_{1}\dd_{2}\dd_{3} \,.
    \end{tikzcd}
\end{equation}
  which amounts to
  the equality
  \begin{equation}\label{coh}
	(\beta_{0}  \dd_{3})\circ( \beta_{0}  \dd_{1}) =  (\dd_{1}   
    \beta_{1})\circ(\beta_{0}  \dd_{2}). 
  \end{equation} 
  We start from
  $(\beta_{0} \dd_{3})( \beta_{0} \dd_{1})$ computed at $b=
  (T_3\xto{f_3}T_2\xto{f_2} T_1 \xto{f_1} T_0).$ The transformation
  $\beta_0\dd_1$ on $b$ is the quasibijection $\pi(f_2f_3).$ The
  transformation $\beta_0\dd_3$ on $b$ is equal to $\beta_0$ on $\dd_3(b)=
  (T_2'\xto{\eta(f_3/f_2)} T_2'\xto{\eta(f_2)}T_1)$ and therefore is equal
  to the quasibijection $\pi(\eta(f_3/f_2))$. Thus the left-hand side of
  the equation is $\pi(\eta(f_3/f_2))\pi(f_2f_3)$. To compute the
  right-hand side $(\dd_{1} \beta_{1})(\beta_{0} \dd_2)$ on $b$ we first see
  that $\beta_0\dd_2$ is given by the quasibijection $\pi(f_3)$, whereas
  $\dd_1\beta_1$ will be equal to $\pi(\pi(f_2)\eta(f_3))$ according to
  \eqref{bb1}. So in the end the equation we need to establish is
\begin{equation}\label{first1}
  \pi(\eta(f_3/f_2))\pi(f_2f_3) = \pi( \pi(f_2)\eta(f_3))\pi(f_3) \,.
\end{equation}
  To establish this equation, we apply Proposition~\ref{fundamental} to
  the commutative \fop square
\begin{equation*}
    \xymatrix@C = +4em{
     T_3 \ar[d]_{f_3} \ar[r]^{\pi(f_2f_3)}& T'_3 \ar[d]^{\eta(f_3/f_2)}
      \\
    T_2 \ar[r]_{\pi(f_2)} & T'_2
}    
\end{equation*}
to get 
 \begin{equation*}
    \xymatrix@C = +7em{
     T_3\ar@/^-3.3ex/[dd]_{f_3}
      \ar[d]^{\pi(f_3)} \ar[r]^{\pi(f_2f_3)}& T_3' \ar[d]_{\pi(\eta(f_3/f_2)}
      \ar@/^3.3ex/[dd]^{\eta(f_3/f_2)} 
           \\
    T''_3 \ar[d]^{\eta(f_3)} \ar[r]^{\pi(\pi(f_2)\eta(f_3))} & T'''_3 \ar[d]_{\eta(\eta(f_3/f_2)}  
      \\ 
     T_2 \ar[r]_{\pi(f_2)} & T'_2   \,.
}    
\end{equation*}
  But commutativity of the top square is now precisely the desired
  Equation~\eqref{first1}.
  
  Finally we check Equation~\eqref{beta-eq-bis} for $n=0$, which states 
  $$
  \beta_0 s_1 = \id .
  $$
  The input to this equation is
  a (locally) 
  order-preserving $1$-chain
  $T_1\to T_0$. The degeneracy operator sends it to $T_1 \to T_0
  \stackrel{\id}\to T_0$. The claim is that the component of the
  $\beta$-cell at this element is trivial. But this is true, because
  $d_2$ just omits the last copy of $T_0$, and since in this case the
  remaining $T_1\to T_0$ is already order-preserving, the reflection does
  nothing to it. So $\beta$ acts as the identity on such degenerate chains.
\end{proof}

\section{Decomposition spaces}
\label{sec:decomp}

We finish with some further treatment of the important case where the strictly
factorisable operadic $\catO$ has the property that all quasibijections
are invertible. We show that in this case the pita nerve is a 
decomposition space~\cite{Galvez-Kock-Tonks:1512.07573} (a.k.a.~$2$-Segal 
space~\cite{Dyckerhoff-Kapranov:1212.3563}).

To show first of all that the pita nerve is a fully coherent pseudo-simplicial 
groupoid, we invoke Jardine's supercoherence result.

\begin{definition}
  We use the word {\em pre-simplicial object} in a $2$-category $\KK$ for the 
  data of 
  \begin{enumerate}
    \item An object $X_n$ for each $n\geq 0$;
    
    \item Face and degeneracy operators $d_i$ and $s_j$ like in a 
    simplicial object;
  
    \item $2$-cells replacing all of the usual simplicial identities:    
    
  $$d_j d_i \to d_i d_{j+1}, i\le j,$$
  $$d_i s_j \to s_{j-1}d_i, i<j,$$
  $$d_i s_j \to s_{j}d_{i-1}, i>j+1,$$   
  $$\id\to d_i s_i,$$
  $$\id\to d_{i+1}s_i,$$
  $$s_i s_j \to s_{j+1} s_i , i\le j.$$
  \end{enumerate}
\end{definition}

\begin{proposition}
  [Supercoherence, Jardine~\cite{JARDINE1991103}]
  \label{prop:Jardine}
  A pre-simplicial object becomes a 
  fully coherent pseudo-simplicial object (that is, a pseudo-functor 
  $\simplexcategory\op\to\KK$) provided all the $2$-cells are invertible
  and satisfy a specific list of 17 families of equations (listed as 
  Equations~(1.4.1)--(1.4.17) in \cite{JARDINE1991103}).
\end{proposition}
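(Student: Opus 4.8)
This is a theorem of Jardine; a self-contained treatment would recast it as an instance of a general coherence principle for presentations of categories, and I sketch the plan. The starting point is the standard presentation of $\simplexcategory$ by generating arrows, namely the elementary cofaces and codegeneracies, with the cosimplicial identities as the defining relations. Dually, a strict simplicial object is a functor sending those relations to equalities, while a pre-simplicial object in the sense of the definition above is exactly the assignment of a $1$-cell to each generator and an invertible $2$-cell to each relation. So what must be proved is that such data extend, essentially uniquely, to a pseudo-functor $\simplexcategory\op\to\KK$ provided the $2$-cells satisfy one coherence equation for each \emph{relation among relations}, i.e.\ for each $2$-dimensional syzygy of the presentation, and that for $\simplexcategory$ these syzygies are generated by the $17$ listed families.

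The key step is a rewriting argument. I would orient the cosimplicial identities so as to push any word in the generators toward its canonical normal form, namely the epi--mono factorisation with the face part index-decreasing and the degeneracy part index-increasing; this rewriting system terminates by a routine length/lexicographic measure, and its unique normal forms are precisely the morphisms of $\simplexcategory$. Attaching to each rewrite step its chosen invertible $2$-cell, every rewriting sequence between two words yields a composite invertible $2$-cell, and one must show that this composite depends only on the endpoints. By the standard principle that a terminating rewriting system is coherently confluent as soon as it is coherently locally confluent (a $2$-dimensional version of Newman's lemma), this reduces to examining the critical pairs: for every overlap of two rule left-hand sides inside a common word, the two one-step reductions must be completable to a common word so that the resulting pasted $2$-cells agree. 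Enumerating the overlaps between pairs of cosimplicial identities, that is, face/face, the several index regimes of face/degeneracy, degeneracy/degeneracy, and the unit rules, is precisely the combinatorics that produces the $17$ families, and the coherence equation attached to each family is exactly the commutativity of its confluence diagram. Invertibility of all $2$-cells is what makes the rewrite steps reversible and lets the confluence diagrams be pasted in either direction.

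The main obstacle is the \emph{completeness} half of the critical-pair analysis: checking that these $17$ overlap types are exhaustive and that no higher syzygy forces an independent coherence condition. I would handle this by a systematic case split over the (ordered) pair of generators whose domains overlap, together with the relative position of the overlap; this is exactly the bookkeeping carried out in \cite{JARDINE1991103}, which I would follow rather than reproduce. I note for the record that in the application of Section~\ref{sec:decomp} this completeness does almost all of the work: in the pita nerve every $2$-cell other than the $\beta$-cells is an identity (cf.\ Lemma~\ref{strict-part}), so most of the $17$ families reduce to already-verified strict simplicial identities, and the remaining ones reduce to Equations~\eqref{beta-eq} and \eqref{beta-eq-bis}, checked in Theorem~\ref{laxcoherence}.
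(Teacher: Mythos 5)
The paper does not prove this proposition: it is quoted verbatim as a black box from Jardine's \emph{Supercoherence} paper, and the 17 families of equations are simply referenced as Equations~(1.4.1)--(1.4.17) there. So there is no in-paper argument to compare yours against; the only question is whether your sketch would stand on its own as a proof, and as written it does not quite. Your recasting of the statement as a coherence theorem for a presentation of $\simplexcategory$ --- generators the cofaces and codegeneracies, relations the cosimplicial identities, coherence conditions one per relation-among-relations --- together with the rewriting strategy (orient toward the epi--mono normal form, invoke a coherent Newman's lemma, reduce to critical pairs) is a legitimate and indeed the modern (Squier-style, polygraphic) way to organise such a proof. But the two load-bearing claims are precisely the ones you defer: (i) that the critical pairs of this rewriting system are exhaustively classified by the 17 overlap types, i.e.\ that no higher syzygy imposes an independent condition, and (ii) that the confluence diagram attached to each overlap coincides with the corresponding equation in Jardine's list (Jardine's equations are stated in a particular normalised form, and matching them to the pasted critical-pair diagrams is a nontrivial bookkeeping step in itself). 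Since those are the entire content of the theorem, your text is a proof \emph{plan} plus a citation rather than a proof --- which is exactly the status the proposition has in the paper, so this is acceptable, but you should present it as such.

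Your closing remark about the application is correct and worth keeping: in the pita nerve all $2$-cells except the $\beta$-cells are identities, so 15 of the 17 families hold trivially and the remaining two are Equations~\eqref{beta-eq} and \eqref{beta-eq-bis}; this is exactly how the paper uses the proposition in Section~\ref{sec:decomp}. One caveat to add there: Jardine's theorem requires \emph{invertible} $2$-cells, which is why the paper only invokes it when all quasibijections of $\catO$ are invertible; for the general top-lax case the paper explicitly declines to claim a Jardine-type coherence (see Remark~\ref{Jardine-rmk}), so your rewriting argument should not be presented as covering the non-invertible situation without further work on the orientation of the $2$-cells.
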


\begin{proposition}
  For a strictly factorisable operadic category $\catO$ in which all 
  quasibijections are invertible, the pita nerve is a fully coherent 
  pseudo-simplicial groupoid.
\end{proposition}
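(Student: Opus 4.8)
The plan is to promote the top-lax simplicial category produced by Theorem~\ref{laxcoherence} to a genuine pseudo-functor by feeding it into Jardine's supercoherence result, Proposition~\ref{prop:Jardine}. Note first that under the present hypothesis each $\fnerve(\catO)_n$ is a groupoid (as observed in the introduction): its morphisms are the \fop diagrams~\eqref{Wmorphisms} whose horizontal arrows are quasibijections, and the componentwise inverse of such a diagram is again one. Note also that the $2$-cells $\beta_n$ of~\eqref{beta} have components built from the units $\pi(-)$, so they are fibrewise order-preserving quasibijections, hence invertible.

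Granting this, I would exhibit $\fnerve(\catO)$ as a pre-simplicial object in the sense preceding Proposition~\ref{prop:Jardine}: the operators are the face operators $\dd_i$, $0\le i\le n$ (the top one being the special composite $\tttt_{n-1}\dw_n$) together with the degeneracy operators; for each simplicial identity we install the identity $2$-cell, {\em except} for the family $\dd_{n+1}\dd_{n+1}\to\dd_{n+1}\dd_{n+2}$, where we install $\beta_n$. This is legitimate because every simplicial identity other than the ones replaced by $\beta$ holds strictly --- those not involving a top face operator because $\fnerve(\catO)$ restricts to a strict presheaf on $\Dt$, and those that do involve a top face operator by Lemma~\ref{strict-part}. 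By the previous paragraph all the installed $2$-cells are invertible.

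It then remains to verify Jardine's $17$ families of equations (his (1.4.1)--(1.4.17)). I would organise this by whether or not a $\beta$-cell occurs. An equation all of whose component $2$-cells are identities holds trivially, with both sides the identity. An equation in which a $\beta$-cell occurs necessarily concerns a triple of generators two of which are consecutive top (i.e.\ last) face operators; writing it out and cancelling the identity cells, what remains is precisely Equation~\eqref{beta-eq} (the triple-face case, which involves two $\beta$'s) or Equation~\eqref{beta-eq-bis} (the case mixing a $\beta$ with a top degeneracy), possibly after an application of naturality of $\beta_n$. Both of these were already established, for all $n$, in the proof of Theorem~\ref{laxcoherence}. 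Proposition~\ref{prop:Jardine} then produces a fully coherent pseudo-functor $\simplexcategory\op\to\Cat$, which by the first paragraph takes values in groupoids --- a fully coherent pseudo-simplicial groupoid.

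The step I expect to be the main obstacle is this last bookkeeping: one must be sure that no equation on Jardine's list combines a $\beta$-cell with the surrounding identity cells in a configuration that is not one of the two top-lax axioms (up to naturality of $\beta$). I do not expect any real difficulty --- $\beta_n$ lives only in the top-face squares, so every Jardine equation that touches it collapses, after cancelling identities, to~\eqref{beta-eq} or~\eqref{beta-eq-bis} --- but it is the one point where one genuinely has to work through Jardine's list family by family rather than invoke it wholesale.
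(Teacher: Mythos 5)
Your proposal is correct and is essentially the paper's own argument: both feed the top-lax structure from Theorem~\ref{laxcoherence} into Jardine's supercoherence result, observing that all $2$-cells except the (invertible) $\beta$-cells are identities, so that $15$ of the $17$ families of equations hold trivially and the remaining two are exactly Equations~\eqref{beta-eq} and~\eqref{beta-eq-bis}, already verified. The bookkeeping concern you flag at the end is legitimate but resolves exactly as you anticipate.
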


\begin{proof}
  The data of a pre-simplicial object has already been provided in
  Section~\ref{sec:pitanerve}: all the $2$-cells are identities except the
  $\beta$-cells. Note that since the components of $\beta$ are now invertible,
  the pita nerve takes values in groupoids rather than categories. Since most of
  the $2$-cell data is trivial, 14 out of the 17 families of equations are
  satisfied trivially (by inspection). There are only three families of
  equations left in Jardine's list, and they are precisely
  Equations~\eqref{beta-eq} and \eqref{new-eq} (corresponding together to
  Jardine's 1.4.1) and Equations~\eqref{beta-eq-bis} (Jardine's 1.4.4), and
  finally Equations~\eqref{new-eq-bis} (Jardine's 1.4.2). We already checked
  \eqref{beta-eq}--\eqref{new-eq}, and \eqref{new-eq-bis} follows as a
  consequence since we are in the situation of Lemma~\ref{reduce-coherence}.
\end{proof}

This pseudo-simplicial groupoid is generally not Segal (except in the special
case where all morphisms in $\catO$ are order-preserving, which is to say that
the cardinality functor factors through the subcategory $\simplexcategory_+
\subset \Fin$). Indeed, the Segal condition says that the map $P_2 \to
P_1\times_{P_0} P_1$ is an equivalence. But for a composable pair of
order-preserving morphisms $T_2 \stackrel{f}\to T_1 \stackrel{g}\to T_0$ (an
element in $P_1\times_{P_0} P_1$), there are generally many locally
order-preserving chains $p\in P_2$ whose $\dd_2$ and $\dd_0$ give $f$ and $g$.
For example, in addition to the order-preserving chain consisting of $f$ and $g$
themselves, one can insert a fibrewise order-preserving quasibijection to get
other locally order-preserving chains with the same image in $P_1\times_{P_0}
P_1$.

Seeing thus a kind of multi-valued composition, it is natural to ask if
the pita nerve $P$ (of a strictly factorisable operadic category $\catO$ 
in which all quasi\-bijections are invertible) is a decomposition 
space~\cite{Galvez-Kock-Tonks:1512.07573} 
(a.k.a.~$2$-Segal space~\cite{Dyckerhoff-Kapranov:1212.3563}).
This is indeed the case:

\begin{theorem}\label{thm:decomp}
  For any strictly factorisable operadic category $\catO$ 
in which all quasibijections are invertible, the pita nerve 
$P=\fnerve(\catO)$ is a decomposition space.
\end{theorem}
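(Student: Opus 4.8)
The plan is to verify the decomposition-space condition directly on generating pullback squares, building on the fact (just established) that $\fnerve(\catO)$ is a fully coherent pseudo-simplicial groupoid. Recall from Gálvez--Kock--Tonks~\cite{Galvez-Kock-Tonks:1512.07573} that a pseudo-simplicial groupoid is a decomposition space precisely when it sends the active--inert pushout squares of $\simplexcategory$ to (homotopy) pullbacks, and that it is enough to check the squares pairing an outer coface (bottom $d_\perp$ or top $d_\top$) with an inner coface, the remaining face--degeneracy conditions being automatic for a decomposition space (and on our side corresponding to the unit equation~\eqref{beta-eq-bis} already verified in Theorem~\ref{laxcoherence}). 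In the conventions of this paper, $d_\perp$ is the honest bottom face $\dd_0=\dw_0$, which merely deletes the source of a chain and visibly preserves local order-preservation, whereas $d_\top$ is the special top face $\dd_n=\tttt_{n-1}\dw_n$ built from the reflection; the inner cofaces $\dd_i$ with $0<i<n$ are honest as well, since composing two consecutive arrows in the middle of a chain does not change the composites ending in the last object. Thus the squares to be checked split into the $\dd_0$--versus--$\dd_i$ family, built entirely from honest operators, and the $\dd_n$--versus--$\dd_i$ family, which involves the special top face.

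First I would dispose of the honest family. Each operator occurring there is the restriction to $\fnerve(\catO)_\bullet$ of a strictly simplicial operator on $\wnerve(\catO)_\bullet$, and the levelwise full inclusion $\fnerve(\catO)_n\subset\wnerve(\catO)_n$ is closed under these operators. Given two chains together with the required compatibility isomorphism, the longer chain is reconstructed object-by-object and arrow-by-arrow exactly as in the strict simplicial object $\wnerve(\catO)$ --- what one face forgets, the other records --- the \fop bookkeeping for the reconstructed morphisms being handled by Propositions~\ref{verticalfop} and~\ref{3out2}, while uniqueness up to unique isomorphism is built into the strict simplicial structure together with the discrete opfibration of Lemma~\ref{df}. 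Finally the reconstructed chain is locally order-preserving because each of its composites ending in the last object coincides with a composite of one of the two given (locally order-preserving) chains.

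The real work is the family $\dd_n$--versus--$\dd_i$. For $0<i<n$ one has to show that the square
\[
\begin{tikzcd}
\fnerve(\catO)_{n+1} \ar[r, "\dd_{n+1}"] \ar[d, "\dd_i"'] & \fnerve(\catO)_n \ar[d, "\dd_i"] \\
\fnerve(\catO)_n \ar[r, "\dd_n"'] & \fnerve(\catO)_{n-1}
\end{tikzcd}
\]
--- which commutes strictly by Lemma~\ref{strict-part}(i) --- is a (homotopy) pullback of groupoids. The approach mirrors the induction used to prove functoriality of $\rrr_n$ in Proposition~\ref{wfn}: given chains $a,b\in\fnerve(\catO)_n$ with $\dd_i(a)\cong\dd_n(b)$, the honest face $\dd_i$ already determines all of the sought chain in $\fnerve(\catO)_{n+1}$ except the interplay between its arrows near the spliced position and the reflection over the last object; there Proposition~\ref{fundamental} supplies the missing data and, by its uniqueness clause together with strict factorisability (uniqueness of $\eta(f/g)$, Definition~\ref{strict-ff}), pins it down uniquely up to unique isomorphism. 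It is here that invertibility of quasibijections is essential: it is what promotes the reconstructed \emph{reflected} objects to genuine objects of the chain, by inverting the relevant $\pi$-factors. Iterating over the length of the chain, precisely as in the case $n=2$ of Proposition~\ref{wfn} (with the big diagram there), yields the pullback property, and, as in Theorem~\ref{laxcoherence}, it is ultimately the strictness of the pita factorisation that makes everything cohere.

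The hard part will be exactly this last family of squares --- reconciling the back-and-forth through the reflection with the inner face operators, and checking that the ambiguity in the reconstruction (insertion of fibrewise order-preserving quasibijections, which is the very reason the nerve fails to be Segal) is rigidified to uniqueness-up-to-unique-isomorphism by strict factorisability. Everything else is bookkeeping on top of the strict simplicial structure of $\wnerve(\catO)$ and the discrete opfibration $\fnerve(\catO)_n\to\fnerve(\catO)_0$.
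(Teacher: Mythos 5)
Your reduction (all active--inert pushout squares, split into the honest $\dd_0$-family and the family involving the special top face) is a legitimate alternative to the paper's, which instead notes that the upper decalage is Segal and then only has to check the single family of squares pairing $\dd_1$ with the top face operators; your handling of the honest family is essentially right. The gap is in the family you yourself call ``the real work'': you never actually prove the pullback property there. You assert that Proposition~\ref{fundamental} ``supplies the missing data'' and that iterating the argument of Proposition~\ref{wfn} ``yields the pullback property'', but neither result is about reconstructing an object of $\fnerve(\catO)_{n+1}$ from its images under the top face and an inner face. The essential content of the theorem is exactly the step you defer: given a map $f$ occurring in a locally order-preserving chain, one must exhibit an equivalence between the groupoid of factorisations $f=f_2\circ f_3$ (with fibrewise order-preserving quasibijections at the middle object as morphisms) and the groupoid of factorisations $\eta(f)=e\circ h$ with $e$ order-preserving --- the latter being discrete, which is where the failure of the Segal condition gets absorbed. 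The paper isolates this as a standalone lemma and proves it via an explicit adjunction, $F(f_2f_3)=\eta(f_2)\,\eta(f_3/f_2)$ (using Proposition~\ref{hitrayadiagramma}) and $G(eh)=e\circ(h\,\pi(f))$, with identity counit and unit $\pi(f_2)$; this is an equivalence precisely when quasibijections are invertible. You correctly locate where invertibility enters (``un-reflecting'' the $\pi$-factors), but without constructing these comparison functors and verifying the triangle identities (equivalently, essential surjectivity and full faithfulness), the fibres of the two vertical maps in your square have not been compared, and the pullback property is unproved.

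A secondary inaccuracy: the degeneracy active--inert squares are not taken care of by Equation~\eqref{beta-eq-bis}, which is a coherence condition on the $2$-cells of the pseudo-simplicial structure, not a pullback condition. (Those squares can indeed be omitted, but because unitality is automatic for $2$-Segal spaces --- a separate theorem --- or, in the paper's route, because they are subsumed in the decalage-Segal criterion.)
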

\begin{proof}
  Since clearly the upper decalage $\DDD(P)$ is a Segal space, we are 
  left with checking that the following squares are pullbacks for all 
  $n\geq 0$:
  \[
  \begin{tikzcd}
  P_{n+3} \ar[d, "\dd_{n+3}"'] \ar[r, "\dd_1"] & P_{n+2} \ar[d, 
  "\dd_{n+2}"]  \\
  P_{n+2} \ar[r, "\dd_1"'] & P_{n+1}
  \end{tikzcd}
  \]
  We do the case $n=0$. All the higher cases are analogous, only with 
  longer chains. For each element $p\in P_2$ in the upper right corner 
  of the diagram we compare the fibre of $\dd_1$ over $p$ with the 
  fibre of $\dd_1$ over $\dd_2(p)$, and show that they are equivalent 
  groupoids. Let $p$ be the locally order-preserving chain 
  $$
  T_3 \stackrel{f}\to T_1 \to T_0 \,.
  $$
  The $\dd_1$-fibre is the groupoid  $G_1$ of all factorisations of $f$
  and fibrewise order-preserving quasibijections between them at the 
  middle object. More precisely this groupoid has objects as the columns
  in the next diagram, and morphisms given by the horizontal map $\sigma$, 
  forming altogether a \fop diagram:
  \begin{equation*}\label{C}
  \begin{tikzcd}
  T_3 \ar[d, "f_3"] \ar[r, "\id"] \ar[dd, bend right, "f"'] & T_3 \ar[d, 
  "\tilde f_3"'] \ar[dd, bend left, "f"]  \\
  T_2 \ar[d, "f_2"] \ar[r, "\sigma"] & \tilde T_2 \ar[d, "\tilde f_2"']  \\
  T_1 \ar[d] \ar[r, "\id"] & T_1 \ar[d]  \\
  T_0  \ar[r, "\id"] & T_0
  \end{tikzcd}
  \end{equation*}
  The other groupoid $G_2$ in play is the fibre over $\dd_2(p)$. Recall that
  $\dd(p)$ is the locally order-preserving chain
  $$
  T_3' \stackrel{\eta(f)}\longrightarrow T_1 ,
  $$
  so the $\dd_1$-fibre over it is the groupoid of factorisations
  \begin{equation}\label{C2}
  \begin{tikzcd}
  T_3' \ar[d, "h"] \ar[r, "\id"] \ar[dd, bend right, "\eta(f)"'] & T_3' \ar[d, 
  "\tilde h"'] \ar[dd, bend left, "\eta(f)"]  \\
  T_2 \ar[d, "e"] \ar[r, "w"] & T'_2 \ar[d, "\tilde e"']  \\
  T_1 \ar[r, "\id"] & T_1 
  \end{tikzcd}
  \end{equation}
  Checking that the two groupoids are equivalent boils down to the following lemma,
  which we separate out since it clarifies the connections between the decomposition-space
  property and strict factorisability.
\end{proof}

The following lemma can be seen as a converse to Proposition~\ref{hitrayadiagramma}.
Fix a morphism $f$ in a strictly factorisable operadic category.
Let $C_1$ be the category whose objects are factorisations $f = f_2
  \circ f_3$ and whose morphisms are \fop diagrams
  \begin{equation}\label{C1}
  \begin{tikzcd}
  T_3 
  \ar[d, "f_3"] 
  \ar[r, "\id"] 
  \ar[dd, bend right, "f"'] 
  & T_3 
  \ar[d, "\tilde f_3"'] 
  \ar[dd, bend left, "f"]
  \\
  T_2 
  \ar[d, "f_2"] 
  \ar[r, "\sigma"] 
  & 
  \tilde{T}_2 
  \ar[d, "\tilde f_2"']
  \\
T_1
  \ar[r, "\id"] 
  & T_1
  \end{tikzcd}
  \end{equation}
Let $C_2$ be the category whose objects are factorisations $\eta(f)= e\circ h$
  with order-preserving $e$ and whose morphisms are diagrams (\ref{C2}). Note that
  $C_2$ is actually discrete. Indeed, since $w$ is a
  \fop quasibijection and $e$ and $\tilde{e}$ are both order-preserving, it follows that
  if $w$ exists it must be an identity.

\begin{lemma}
  There is an adjunction 
  \[
  \begin{tikzcd}
  C_1 \ar[r, shift left=2, "G"] \ar[r, phantom, "\top"{font=\tiny}] & C_2 ,
  \ar[l, shift left=2, "F"]
  \end{tikzcd}
  \]
  which is an equivalence in the case where all quasibijections are invertible.
\end{lemma}

\begin{proof}
  We construct a functor $G:C_1\to C_2$ by application of
  Proposition~\ref{hitrayadiagramma}(2) to a factorisation $f=f_2f_3.$ 
  We obtain a commutative diagram
  \begin{equation}\label{etafunctorialC1}
    \xymatrix@C = +1em@R = +1em{
    T_3 \ar@/^-3.3ex/[ddrrr]_{f} \ar[drr]^{\hskip1em\pi(f)}     \ar[rrrrrr]^{f_3}& & & & & & T_2  \ar@/^3.3ex/[ddlll]^{f_2} \ar[dll]_{\hskip-0.7em\pi(f_2)} 
     \\
     & & T_3'      \ar@{-->}[rr]^{\eta({f_3}/{f_2})} \ar[dr]_(0.2){\eta(f)} & & \tilde T_2 \ar[dl]^(0.2){\hskip-0.7em\eta(f_2)} & &
      \\ 
     & & &T_1& & &
    }
\end{equation}
  Observe that the inner triangle here
  is an object
  of the category $C_2$, and we define $G(f_2\circ f_3)$ to be this 
  factorisation $\eta(f)=
  \eta(f_2)\circ\eta(f_3/f_2)$. Functoriality of this assignment
  follows directly from the discreteness of $C_2$ (or from 
  Proposition~\ref{fundamental}(1)).
  
  To construct a left adjoint to $G$, we consider 
  a given factorisation $\eta(f)= e\circ h$
  and assign to it the factorisation 
  $$F(e\circ h):= e\circ(h\pi(f)).$$
  (This composes to $f$ as required: $e\circ(h\pi(f)) = \eta(f) \circ \pi(f) = 
  f$.)
  We have
  $$
  GF(e\circ h) = G(e\circ(h\pi(f)))=\eta(e)\circ\eta(h\pi(f)/e) = e\circ h,
  $$
  because $e$ is order-preserving (and hence $e=\eta(e)$) and because
$$
\eta(h\pi(f)/e) = \eta(h/e)\eta(\pi(f)/eh) = \eta(h/e)\eta(\pi(f)/\eta(f)) = h \cdot id = h
$$
by Lemma~\ref{etafunction}(3,6)).
This shows that the counit of the adjunction is an identity.
  
 On the other hand,
  $$FG(f_2\circ f_3)= F(\eta(f_2)\circ \eta(f_3/f_2)) =
  \eta(f_2)\circ (\eta(f_3/f_2)\pi(f))$$ and
     the unit is given by the diagram
  \begin{equation}\label{C3}
  \begin{tikzcd}  T_3 \ar[d, "f_3"] \ar[r, "\id"] \ar[dd, bend right, "f"'] & T_3 \ar[d, 
  "\eta(f_3/f_2)\pi(f)"] \ar[dd, bend left, "f"]  \\
  T_2 \ar[d, "f_2"] \ar[r, "\pi(f_2)"] & \tilde T_2 \ar[d, " \eta(f_2)"']  \\
  T_1
   \ar[r, "\id"] & T_1
  \end{tikzcd}
  \end{equation}
  which commutes by
  Diagram~\eqref{etafunctorialC1}. 
  We omit
  the straightforward check of naturality.
  Note that $\pi(f_2)$ is a quasibijection, so if we know that all quasibijections 
  are invertible, then the unit is invertible, and altogether the adjunction is an 
  equivalence in that case.
\end{proof}

One reason to be interested in decomposition spaces is that they have 
incidence coalgebras~\cite{Galvez-Kock-Tonks:1512.07573}, 
\cite{Galvez-Kock-Tonks:1612.09225}. It follows from  Theorem~\ref{thm:decomp} that
any strictly factorisable operadic category with invertible 
quasibijections has associated to it a new coalgebra. 

Recall from \cite{Galvez-Kock-Tonks:1512.07573} that the incidence
coalgebra of a decomposition space $X$ has basis given by iso-classes
of elements in $X_1$, and that the comultiplication is given on  a basis element
$f\in X_1$ as
$$
\Delta(f) = \sum_{p \in X_2 \,\mid\, \dd_1(p)=f} \dd_2(p) \otimes \dd_0(p) .
$$
In the present case of the pita nerve $P$, the formula thus gives,
for any order-preserving morphism $f$:
$$
\Delta(f) = \sum_{f=e\circ h} \eta(h) \otimes e \,,
$$
where in the sum we require $e$ to be order-preserving, but $h$ is not 
required to be so.

It is beyond the scope of this paper to investigate the significance of
this construction, but as an illustration we just work out one basic example.

\begin{blanko}{Example: finite sets and surjections.}
  We consider the example of $\Fin_{\mathrm{surj}}$. This is a strictly factorisable
  operadic category, and the quasibijections are just the bijections. The comultiplication
  is now very explicit, and it is easy to list all the factorisations of a given
  order-preserving surjection. Furthermore, because of the compatibility of ordinal sum with
  pita factorisation in $\Fin$, the coalgebra actually becomes a bialgebra (see
  \cite{Galvez-Kock-Tonks:1512.07573}). To describe the comultiplication, it is thus enough
  to give its effect on connected surjections, meaning surjections of the form $n\to 1$. In
  the left-hand tensor factor of the formula for comultiplication we have first a general
  surjection $h$, but after replacing it with $\eta(h)$ we get an order-preserving
  surjection, and then it splits uniquely as the ordinal sum of connected surjections. So
  altogether everything can be described in terms of connected surjections only. Let $A_n$
  denote the isoclass of the surjection $n\to 1$. Then
\begin{align*}
\Delta(A_1) &= A_1 \otimes A_1 \\
\Delta(A_2) &= 2! \cdot A_1^2 \otimes A_2 + A_2 \otimes A_1 \\
\Delta(A_3) &= 3! \cdot A_1^3\otimes A_3 +  2! \cdot 3  A_2 A_1\otimes A_2+ A_3 
\otimes A_1 \\
\Delta(A_4) &= 4!\cdot A_1^4\otimes A_4 + 3! \cdot 6 A_2A_1^2 \otimes A_3 +
2!\cdot ( 4 A_3 A_1 + 3 A_2^2)\otimes A_2 + A_4\otimes A_1
\end{align*}
The general formula is
$$
\Delta(A_n) = k! \cdot B(n,k)(A_1,A_2,A_3,\ldots) \otimes A_k
$$
where $B(n,k)(A_1,A_2,\ldots)$ are
the Bell polynomials counting partitions of $n$ into $k$ blocks of sizes
specified by the vector $(A_1, A_2,\ldots)$.

  This is very similar to the Fa\`a di Bruno bialgebra, but the factors $k!$ make it
  different from the usual Fa\`a di Bruno bialgebra (which has the Bell polynomials without
  the factorials), and it is not an obvious base change of it either. Recall that the usual
  Fa\`a di Bruno bialgebra arises as the incidence bialgebra of the fat nerve of the
  category $\Fin_{\mathrm{surj}}$ without regard to the operadic-category structure, and
  that the Dynkin--Fa\`a di Bruno bialgebra~\cite{MuntheKaas:BIT95} arises as the incidence
  bialgebra of the category of order-preserving surjections (without regard to
  operadic-category structure). Over the rational numbers, these two are isomorphic by the
  base change that sends $A_n$ to $A_n/n!$. The factorials in the new example enter in a
  different way, intertwined with the summation. (See \cite{Kock-Weber:1609.03276} for these
  two Fa\`a di Bruno bialgebras as well as further generalisations from an operadic
  perspective.)
\end{blanko}


\bigskip

\footnotesize

\noindent
Address: {Department of Mathematics, HSE University, 119048 Usacheva str., 6, Moscow, Russia}

\noindent
Email addresses: \texttt{bataninmichael@gmail.com, mbatanin@hse.ru} 

\medskip

\noindent
Address: {Universitat Aut\`onoma de Barcelona, University of Copenhagen, 
and Centre de Recerca Matem\`atica}

\noindent
Email address: \texttt{joachim.kock@uab.cat} 

\medskip

\noindent
Address: {Macquarie University, Sydney}

\noindent
Email address: \texttt{mark.weber.math@googlemail.com} 

\vfill

\hrule

  \noindent Grant info: We acknowledge support from grant
  No.~10.46540/3103-00099B from the Independent Research Fund Denmark,
  grant PID2024-158573NB-I00 (AEI/FEDER, UE) of Spain and grant
  2021-SGR-1015 of Catalonia, as well as the Severo Ochoa and Mar\'ia de
  Maeztu Program for Centers and Units of Excellence in R\&D grant number
  CEX2020-001084-M and the Danish National Research Foundation
  through the Copenhagen Centre for Geometry and Topology (DNRF151).

\end{document}